\numberwithin{equation}{section}
\theoremstyle{plain}
\newtheorem{thm}{Theorem}[section]
\newtheorem{lem}[thm]{Lemma}
\newtheorem{prop}[thm]{Proposition}
\theoremstyle{definition}
\newtheorem{defn}[thm]{Definition}
\newtheorem{ex}[thm]{Example}
\theoremstyle{remark}
\newtheorem{rem}[thm]{Remark}
\newtheorem{claim}{Claim}
\newtheorem*{ack}{Acknowledgment}
\newcommand{\R}{\mathbb{R}}
\newcommand{\Z}{\mathbb{Z}}
\newcommand{\FF}{\mathcal{F}}
\newcommand{\GG}{\mathcal{G}}
\newcommand{\HH}{\mathcal{H}}
\newcommand{\PP}{\mathcal{P}}
\newcommand{\fg}{\mathfrak{g}}
\newcommand{\fX}{\mathfrak{X}}
\newcommand{\bfh}{\mathbf{h}}
\newcommand{\bfM}{\mathbf{M}}
\newcommand{\bfT}{\mathbf{T}}
\newcommand{\bfFF}{\boldsymbol{\FF}}
\newcommand{\bfpi}{\boldsymbol{\pi}}
\newcommand{\supp}{\operatorname{supp}}
\newcommand{\im}{\operatorname{im}}
\newcommand{\id}{\operatorname{id}}
\newcommand{\Aut}{\operatorname{Aut}}
\newcommand{\Diffeo}{\operatorname{Diffeo}}
\newcommand{\Hol}{\operatorname{Hol}}
\newcommand{\sign}{\operatorname{sign}}
\newcommand{\Fix}{\operatorname{Fix}}
\newcommand{\rank}{\operatorname{rank}}
\newcommand{\dom}{\operatorname{dom}}
\newcommand{\ev}{\operatorname{ev}}
\newcommand{\Aff}{\operatorname{Aff}}
\newcommand{\SL}{\operatorname{SL}}
\newcommand{\PSL}{\operatorname{PSL}}
\newcommand{\SO}{\operatorname{SO}}
\newcommand{\PSO}{\operatorname{PSO}}
\newcommand{\Hess}{\operatorname{Hess}}
\newcommand{\Cinftyc}{C^\infty_{\text{\rm c}}}
\newcommand{\sm}{\smallsetminus}
\newcommand{\olfX}{\overline{\fX}}
\newcommand{\fXcom}{\fX_{\text{\rm com}}}
\newcommand{\olfXcom}{\overline{\fX}_{\text{\rm com}}}
\definecolor{darkgreen}{cmyk}{1,0,1,.2}
\definecolor{m}{rgb}{1,0.1,1}
\newdimen\theight
\def\TeXref#1{%
             \leavevmode\vadjust{\setbox0=\hbox{{\tt
                     \quad\quad  {\small \textrm #1}}}%
             \theight=\ht0
             \advance\theight by \lineskip
             \kern -\theight \vbox to
             \theight{\rightline{\rlap{\box0}}%
             \vss}%
             }}%
\title{Simple foliated flows}
\author[J.A. \'Alvarez L\'opez]{Jes\'us A. \'Alvarez L\'opez}
\address{Department/Institute of Mathematics\\
         University of Santiago de Compostela\\
         15782 Santiago de Compostela\\ Spain}
\email{jesus.alvarez@usc.es}
\author[Y.A. Kordyukov]{Yuri A. Kordyukov}
\address{Institute of Mathematics\\ 
Ufa Federal Research Centre\\ 
Russian Academy of Science\\ 
112 Chernyshevsky str.\\ 
450008 Ufa\\ Russia}
\email{yurikor@matem.anrb.ru}
\author[E. Leichtnam]{Eric Leichtnam}
\address{Institut de Math\'ematiques de Jussieu-PRG\\ CNRS\\ Batiment Sophie Germain (bureau 740)\\ Case~7012\\ 75205 Paris Cedex 13, France}
\email{ericleichtnam@math.jussieu.fr}
\thanks{The authors are partially supported by FEDER/Ministerio de Ciencia, Innovaci\'on y Universidades/AEI/MTM2017-89686-P and MTM2014-56950-P, and Xunta de Galicia/2015 GPC GI-1574 and ED431C 2019/10 with FEDER funds.}
\date{\today}
\subjclass{57R30}
\keywords{Foliation almost without holonomy, transversely affine foliation, transversely projective foliation, simple flow, foliated flow}
\begin{document}

\maketitle

\begin{abstract}
We describe transversely oriented foliations of codimension one on closed manifolds that admit simple foliated flows.
\end{abstract}

\tableofcontents

\section{Introduction}

In this paper, we describe transversely oriented foliations of codimension one on closed manifolds that admit simple foliated flows. Our motivation to study simple foliated flows comes from the role that they play in Deninger's program \cite{Deninger1998,Deninger2001,Deninger2002,Deninger-Arith_geom_anal_fol_sps,Deninger2008}. These are exactly those foliated flows for which a dynamical Lefschetz trace formula conjectured by Deninger holds. For the study of the associated Lefschetz trace formula, we refer to \cite{AlvKordy2001,AlvKordy2002,AlvKordy2008a,AlvKordyLeichtnam-aorfobg,AlvKordyLeichtnam-atffff}. A related classification of foliated dynamical systems was given in \cite{KimMorishitaNodaTerashima-FDS}.

Let $\FF$ be a smooth foliation of codimension one on a closed manifold $M$. Flows on $M$ are foliated when they map leaves to leaves. This means that their infinitesimal generators are infinitesimal transformations of $(M,\FF)$. These infinitesimal transformations form the normalizer $\fX(M,\FF)$ of the Lie subalgebra $\fX(\FF)\subset\fX(M)$ of vector fields tangent to the leaves, obtaining the quotient Lie algebra $\olfX(M,\FF)=\fX(M,\FF)/\fX(\FF)$. The elements of $\olfX(M,\FF)$, called transverse vector fields, can be considered as leafwise invariant sections of the normal bundle of $\FF$. 

Let $(\Sigma,\HH)$ be the holonomy pseudogroup of $\FF$. The infinitesimal generators of $\HH$-equivariant local flows on $\Sigma$ are the $\HH$-invariant vector fields. These invariant vector fields form a Lie subalgebra $\fX(\Sigma,\HH)\subset\fX(\Sigma)$. There is a canonical identity $\olfX(M,\FF)\equiv\fX(\Sigma,\HH)$, and every foliated flow $\phi$ induces an $\HH$-equivariant local flow $\bar\phi$ on $\Sigma$.

Simple fixed points and simple closed orbits of a flow $\phi$ can be defined by using a transversality condition between the graph of $\phi$ and the diagonal. The flow is simple when all of its fixed points and closed orbits are simple. Using the canonical identity between leaf and orbit spaces, $M/\FF\equiv\Sigma/\HH$, the leaves preserved by a foliated flow $\phi$, which will be shortly called preserved leaves in the sequel, correspond to $\HH$-orbits consisting of fixed points of $\bar\phi$. A preserved leaf $L$ is called transversely simple if the corresponding fixed points $\bar p$ of $\bar\phi$ are simple. In this case, $\bar\phi^t_*=e^{\varkappa t}$ on $T_{\bar p}\Sigma\equiv\R$ for some $\varkappa=\varkappa_L\in\R^\times:=\R\sm\{0\}$, which depends only on $L$. It is said that $\phi$ is transversely simple when all of its preserved leaves are transversely simple. Clearly, every simple flow is transversely simple. 

Let $L$ be any compact leaf whose holonomy group $\Hol L$ can be described by germs of homotheties at $0$. This description of $\Hol L$ can be achieved with a foliated chart $(U,(x,y))$ around any point of $L$, where $x$ is the transverse coordinate. The same kind of description of $\Hol L$ is given by the foliated chart $(U,(u,y))$, with $u=x\,|x|^{\alpha-1}$ ($0<\alpha\ne1$), which is not smooth at $U\cap L$. A transverse power change of the differentiable structure around $L$ is defined by requiring all of these new charts to be smooth. In Sections~\ref{ss: change of diff struct} and~\ref{ss: tubular neighborhoods}, we give a description of this new differential structure in terms of a defining form of $\FF$ and a defining function of $L$ on some tubular neighborhood.

The following is our main result, which is part of Theorem~\ref{t: simple foliated flows}.

\begin{thm}\label{t: description}
Let $\FF$ be a transversely oriented smooth foliation of codimension one on a closed manifold $M$. Then $\FF$ admits a (transversely) simple foliated flow in the following cases and uniquely in these ones:
\begin{enumerate}[(i)]

\item\label{i: fiber bundle, description}  $\FF$ is a fiber bundle over $S^1$ with connected fibers.

\item\label{i: minimal Lie foln, description} $\FF$ is a minimal $\R$-Lie foliation.

\item\label{i: transv affine, description} $\FF$ is an elementary transversely affine foliation whose developing map is surjective over $\R,$ and whose global holonomy group is a non-trivial group of homotheties.

\item\label{i: transv proj, description} $\FF$ is a transversely projective foliation whose developing map is surjective over the real projective line $S^1_\infty=\R\cup\{\infty\}$, and whose global holonomy group consists of the identity and hyperbolic elements with a common fixed point set.

\item\label{i: transv change of diff str, description} $\FF$ is obtained from~(\ref{i: transv affine, description}) or~(\ref{i: transv proj, description}) using transverse power changes of the differentiable structure of $M$ around the compact leaves.
\end{enumerate}
\end{thm}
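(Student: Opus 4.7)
My plan is to prove the theorem in two directions: first, to construct a (transversely) simple foliated flow in each of the cases~(\ref{i: fiber bundle, description})--(\ref{i: transv change of diff str, description}), and conversely, to show that any foliation admitting such a flow falls into one of these cases. The key tools will be the induced local flow $\bar\phi$ on the holonomy transversal $(\Sigma,\HH)$, the linearization identity $\bar\phi^t_*=e^{\varkappa_L t}$ at each preserved leaf~$L$, and the structure theory of foliations almost without holonomy implicit in the keywords.

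For existence, I would build explicit models case by case. In~(\ref{i: fiber bundle, description}), lifting the rotation of $S^1$ via the bundle projection yields a foliated flow whose induced map on $\Sigma\equiv S^1$ is a translation with no fixed points, hence trivially simple. In~(\ref{i: minimal Lie foln, description}), the $\R$-Lie structure gives a translation flow on $\Sigma\equiv\R$, producing a fixed-point-free foliated flow on~$M$. In~(\ref{i: transv affine, description}) and~(\ref{i: transv proj, description}), the prescribed global holonomy sits inside the centralizer of a one-parameter subgroup of homotheties (inside $\Aff(\R)$ in the affine case, or inside the stabilizer in $\PSL(2,\R)$ of the common pair of fixed points in the projective case); this one-parameter subgroup descends through the developing map to a transverse flow on $(\Sigma,\HH)$, which I would lift to $M$ using a foliated partition of unity. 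Simplicity follows because the fixed points in $\R$, respectively in $S^1_\infty$, are hyperbolic. For~(\ref{i: transv change of diff str, description}), one checks directly that a transverse power change converts a simple foliated flow on the original differentiable structure into one on the new structure, merely rescaling $\varkappa_L$ by~$\alpha$.

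For the classification, let $\phi$ be a transversely simple foliated flow on $(M,\FF)$. If $\phi$ preserves no leaf, then $\bar\phi$ is fixed-point-free on $\Sigma$, so $\phi$ is everywhere transverse to~$\FF$, and a standard argument produces case~(\ref{i: fiber bundle, description}). Otherwise, pick a preserved leaf~$L$; transverse simplicity $\bar\phi^t_*=e^{\varkappa_L t}$ with $\varkappa_L\neq 0$ forces $\Hol L$ into the centralizer of this real linear action, so $\Hol L$ is conjugate to a group of germs of homotheties at~$0$. A Reeb-type stability argument for foliations almost without holonomy then shows that all preserved leaves are compact and decompose~$M$ into finitely many open saturated regions in which $\phi$ has no preserved leaf; each such region should be either a Lie-foliated piece or a foliated product. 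Gluing these pieces across the preserved leaves with matching homothety holonomy produces a transverse affine or projective structure on $M$ or on $M$ minus the preserved leaves. Full simplicity (rather than only transverse simplicity) then promotes the global holonomy to the hyperbolic form stated in~(\ref{i: transv affine, description}) or~(\ref{i: transv proj, description}), and smoothness or its failure at the preserved leaves distinguishes these from the power-changed case~(\ref{i: transv change of diff str, description}).

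The hardest step will be the global coherence of the transverse geometry: I must show that the locally compatible linearizations around the distinct preserved leaves assemble into a single transverse affine or projective structure, that the corresponding developing map is \emph{surjective} onto $\R$ or onto $S^1_\infty$, and that the global holonomy is precisely of the stated form, with no parabolic or elliptic generators possible in the projective case. Uniqueness of the flow, up to positive rescaling and reparametrization, should then follow from the rigidity of these transverse structures, since any two candidate flows must restrict to reparametrizations of the one-parameter subgroup used in the construction.
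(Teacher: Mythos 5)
Your skeleton matches the paper's (homothety holonomy at the preserved leaves, Hector's structure theory for foliations almost without holonomy, assembly of a transverse affine/projective structure), but two substantive steps are missing. First, your existence constructions only produce \emph{transversely} simple flows: the lifted one-parameter subgroup controls nothing about fixed points and closed orbits \emph{inside} the preserved leaves, nor about closed orbits transverse to $\FF$, so ``trivially simple'' in case~(i) and ``simplicity follows because the fixed points in $\R$ are hyperbolic'' in~(iii)--(iv) justify only transverse simplicity. The paper devotes a separate proposition to the upgrade: one places a Morse gradient flow on $M^0$, checks in the local suspension model that this makes the fixed points and closed orbits in $M^0$ simple and excludes closed orbits near $M^0$, and then invokes the Kupka--Smale theorem in Peixoto's strong-$C^\infty$ form on the \emph{open} manifold $M^1$ to perturb the vector field there while fixing its normal component (hence the preserved leaves and the induced flow on $\Sigma$). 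Without this, the parenthetical ``(transversely)'' in the statement --- i.e.\ the equivalence of the two notions --- is not established.

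Second, the step you flag as hardest is where the real content lies, and your sketch points slightly the wrong way. A foliation carrying a transversely simple flow is in general \emph{not} transversely projective: matching the homothety charts around $M^0$ with the exponentiated Lie-foliation charts on $M^1$ yields transition maps of the form $u\mapsto cu^{\varkappa_L}$, which lie in $\PSL(2,\R)$ only when $\varkappa_L=\pm1$. This is precisely why the transverse power change of the differentiable structure (case~(v)) is unavoidable; it is not detected by ``smoothness or its failure at the preserved leaves'' --- everything is smooth throughout. Moreover, the hyperbolic form of the global holonomy and the exclusion of parabolic and elliptic elements come from the homothety property of $\Hol L$ (a consequence of transverse simplicity alone, via invariance of $x\partial_x$) combined with Inaba's and Inaba--Matsumoto's classifications of transversely affine/projective foliations almost without holonomy, and from the vanishing of invariant vector fields in the presence of elliptic elements --- not from full simplicity, as you assert. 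Finally, ``uniquely in these ones'' means ``and only in these cases''; the flow itself is far from unique (case~(i) admits simple foliated flows with any even number of preserved leaves), so your closing claim of uniqueness of the flow up to rescaling is both unneeded and false.
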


In all cases of Theorem~\ref{t: description}, $\FF$ is almost without holonomy.

In the cases~(\ref{i: fiber bundle, description}) and~(\ref{i: minimal Lie foln, description}), $\FF$ is defined by a non-vanishing closed form $\omega$ of degree one, and therefore it is indeed without holonomy. The group of periods of $[\omega]\in H^1(M)$ has rank $1$ in~(\ref{i: fiber bundle, description}), and rank $>1$ in~(\ref{i: minimal Lie foln, description}).

In the case~(\ref{i: fiber bundle, description}), all leaves are compact and we have $\olfX(M,\FF)\equiv\fX(S^1)$. Moreover, for any even number of points, $x_1,\dots,x_{2m}\in S^1$ ($m\ge0$), in cyclic order, and numbers $\varkappa_1,\dots,\varkappa_{2m}\in\R^\times$, with alternate sign, there is some (transversely) simple foliated flow $\phi$ whose preserved leaves are the fibers $L_i$ over the points $x_i$, with $\varkappa_{L_i}=\varkappa_i$.  If $m>0$, then $\phi$ has no closed orbits transverse to the leaves. If $m=0$, then $\phi$ has no preserved leaves, and therefore no fixed points. Every transversely simple foliated flow is of this form.

In the cases~(\ref{i: minimal Lie foln, description})--(\ref{i: transv proj, description}), $\olfX(M,\FF)$ is of dimension one.

In the case~(\ref{i: minimal Lie foln, description}), $\olfX(M,\FF)$ is generated by a non-vanishing transverse vector field, and the transversely simple foliated flows have no preserved leaves.

In the cases~(\ref{i: transv affine, description})--(\ref{i: transv change of diff str, description}), there is a finite number of compact leaves, which are the preserved leaves of every transversely simple foliated flow.

In the case~(\ref{i: transv affine, description}) or~(\ref{i: transv proj, description}), for every transversely simple flow $\phi$, there is some $\varkappa\in\R^\times$ such that the set of numbers $\varkappa_L$ is $\{\varkappa\}$ or $\{\pm\varkappa\}$, respectively.

In the cases~(\ref{i: transv affine, description}) and~(\ref{i: transv proj, description}), the holonomy groups of the compact leaves can be described by germs of homotheties at $0$. Thus transverse power changes of the differentiable structure can be considered around them to get the case~(\ref{i: transv change of diff str, description}). $\olfX(M,\FF)$ and the (transversely) simple foliated flows are independent of these changes of the differentiable structure. But every $|\varkappa_L|$ can be modified arbitrarily by performing such changes, keeping $\sign(\varkappa_L)$ invariant.

\begin{ack}
We thank Hiraku Nozawa for helpful discussions about the contents of this paper.
\end{ack}

\section{Preliminaries}\label{s: prelims}

Let $M$ be a (smooth) manifold of dimension $n$.

\subsection{Simple flows}\label{ss: simple flows}

Let $Z\in\fX(M)$ with local flow $\phi:\Omega\to M$, where $\Omega$ is an open neighborhood of $M\times\{0\}$ in $M\times\R$. For $p\in M$ and $t\in\R$, let
\[
\Omega_p=\{\,\tau\in\R\mid(p,\tau)\in\Omega\,\}\;,\quad\Omega^t=\{\,q\in M\mid(q,t)\in\Omega\,\}\;,
\]
and let $\phi^t=\phi(\cdot,t):\Omega^t\to M$. It is said that $p\in M$ is a \emph{fixed point} of $\phi$ if it is a fixed point of $\phi^t$ for all $t$ in some neighborhood of $0$ in $\Omega_p$; in other words, if $Z(p)=0$. The fixed point set is denoted by $\Fix(\phi)$. For every $p\in\Fix(\phi)$, there is an endomorphism $H_p$ of $T_pM$ so that $\phi^t_*=e^{tH_p}$ on $T_pM$. Then $p$ is called \emph{simple}\footnote{The terms \emph{transverse}/\emph{elementary} are also used instead of simple/generic.} (respectively, \emph{generic}) if $H_p$ is an automorphism (respectively, no eigenvalue of $H_p$ has zero real part).

Now assume that $Z$ is complete with flow $\phi:M\times\R\to M$, which may considered as a one-parameter subgroup of diffeomorphisms, $\phi=\{\phi^t\}\subset\Diffeo(M)$. On $M\sm\Fix(\phi)$, let $N\phi$ denote the normal bundle to the orbits of $\phi$; i.e., $N_p\phi=T_pM/\R\,Z(p)$ for all $p\in M\sm\Fix(\phi)$. For every closed orbit $c$ of $\phi$ (without including fixed points), let $\ell(c)$ denote its smallest positive period. Recall that $c$ is called \emph{simple} (respectively, \emph{generic}) if the eigenvalues of the isomorphism of $N_p\phi$ induced by $\phi^{\ell(c)}_*$ are different from $1$ (respectively, have modulo different from $1$) for all $p\in c$.

It is said that $\phi$ (or $Z$) is \emph{simple} if all of its fixed points and closed orbits are simple. This means that the maps $M\times\R^\pm\to M^2\times\R^\pm$, $(p,t)\mapsto(p,\phi^t(p),t)$ and $(p,t)\mapsto(p,p,t)$, are transverse \cite[Lecture~2, Lemma~7]{Guillemin1977}. Thus fixed points and closed orbits are isolated in this case;  there are finitely many of them if $M$ is compact. 

On the other hand, $\phi$ (or $Z$) is called \emph{generic} if all of its fixed points and closed orbits are generic, and their stable and unstable manifolds are transverse---the definition of the stable and unstable manifolds is omitted because we will not use them. A theorem of Kupka \cite{Kupka1963,Kupka1964} and Smale \cite{Smale1963} states that, for any closed manifold $M$, the set of generic smooth vector fields on $M$ is residual in $\fX(M)$ with the $C^\infty$ topology (see also \cite{Peixoto1962} for the case of closed surfaces). This was generalized to open manifolds by Peixoto \cite{Peixoto1967}, using the strong $C^\infty$ topology.

\begin{rem}\label{r: fZ}
Suppose that $M$ is closed. For $0<f\in C^\infty(M)$, let $Z'=fZ\in\fX(M)$. The flow $\phi'$ of $Z'$ has the same orbits as $\phi$, considered as sets, but with possibly different time parameterizations; precisely, there is a smooth function $t':M\times\R\to\R$ such that $\phi(p,t)=\phi'(p,t'(p,t))$ for all $(p,t)$. It easily follows that $\phi'$ is simple if and only if $\phi$ is simple.
\end{rem}

\begin{ex}\label{ex: Morse}
Suppose that $M$ is closed, and let $f$ be a Morse function on $M$. For any Riemannian metric on $M$, the flow $\phi$ of $\nabla f$ has no closed orbits because $f$ is strictly increasing on every orbit in $M\sm\Fix(\phi)$. Moreover every $p\in\Fix(\phi)$ is generic because $H_p$ is given by $\Hess f(p)$, whose eigenvalues are in $\R^\times$. The transversality of the stable and unstable manifolds of all fixed points holds for an open dense set of Riemannian metrics in the $C^2$ topology \cite[Section~2.3]{Schwarz1993} (see also \cite{Smale1961}). In this case, $\nabla f$ is generic without closed orbits.
\end{ex}

\subsection{Collar and tubular neighborhoods}\label{ss: collar}

Suppose that $M$ is compact with boundary, and let $\mathring M$ denote its interior. There exists a \emph{boundary defining function} $x\in C^\infty(M)$, in the sense that $x\ge0$, $x^{-1}(0)=\partial M$, and $dx\ne0$ on $\partial M$. Then an (open) \emph{collar neighborhood} of the boundary, $\varpi:T\to\partial M$, can be chosen of the form\footnote{In a product, the projections may be indicated as subindexes of the factors.} $T\equiv[0,\epsilon)_x\times\partial M_\varpi$ for some $\epsilon>0$. For any chart $(V,y)$ of $\partial M$, we get a chart $(U\equiv[0,\epsilon)_x\times V,(x,y))$ of $M$ adapted to $\partial M$.

Now assume that $M$ is closed. Let $M^0\subset M$ be a (possibly disconnected) regular and transversely oriented submanifold of codimension one, and let $M^1=M\sm M^0$.  Since $M^0$ is transversely oriented, there is a \emph{defining function} $x$ of $M^0$ in some open $W\subset M$, in the sense that $x\in C^\infty(W)$, $M^0=x^{-1}(0)\subset W$, and $dx\ne0$ on $M^0$. Then there is an (open) \emph{tubular neighborhood} of $M^0$ in $W$, $\varpi:T\to M^0$, of the form $T\equiv(-\epsilon,\epsilon)_x\times M^0_\varpi$ for some $\epsilon>0$. For any chart $(V,y)$ of $M^0$, we get a chart $(U\equiv(-\epsilon,\epsilon)_x\times V,(x,y))$ of $M$ adapted to $M^0$. Let $\bfM$ be the manifold with boundary defined by ``cutting'' $M$ along $M^0$; i.e., modifying $M$ only on the tubular neighborhood $T\equiv(-\epsilon,\epsilon)\times M^0$, which is replaced with $\bfT=((-\epsilon,0]\sqcup[0,\epsilon))\times M^0$ in the obvious sense. Thus $\partial\bfM\equiv M^0\sqcup M^0$, and $\mathring\bfM\equiv M^1$. There is a canonical projection $\bfpi:\bfM\to M$, which is the combination of the identity on $\mathring\bfM\equiv M^1$ and the map $\bfT\to T$ induced by the canonical projection $(-\epsilon,0]\sqcup[0,\epsilon)\to(-\epsilon,\epsilon)$. This projection realizes $M$ as a quotient space of $\bfM$ by ``gluing'' the two copies of $M^0$ in the boundary. 

The connected components of $\bfM$ can be also described as the metric completion of the connected components of $M^1$ with respect to the restriction of any Riemannian metric on $M$, and then $\bfpi$ is given by taking limits of Cauchy sequences.

\subsection{Foliations}\label{ss: folns}

The concepts used here are explained in standard references on foliations, like \cite{Haefliger1962,HectorHirsch1981-A,HectorHirsch1983-B,CamachoLinsNeto1985,Godbillon1991,Tamura1992,CandelConlon2000-I,CandelConlon2003-II,Walczak2004}. Let $\FF$ be a (smooth) \emph{foliation}\footnote{It is also said that $(M,\FF)$ is a \emph{foliated manifold}.} on $M$ of codimension $n'$ and dimension $n''$. Locally, $\FF$ can be described by a (smooth) \emph{foliated chart} $(U,x)$, where $x=(x',x''):U\to x(U)=\Sigma\times B''$ for open balls, $\Sigma$ in $\R^{n'}$ and $B''$ in $\R^{n''}$. In the case of codimension one, we may use the notation $(x,y)$ instead of $(x',x'')$. The fibers of $x'$ are the \emph{plaques}. The intersections of plaques of different foliated charts are open in the plaques. Thus all plaques of all foliated charts form a base of a finer topology on $M$ whose path-connected components are the  \emph{leaves}, which are injectively immersed $n''$-submanifolds. The leaf through any $p\in M$ may be denoted by $L_p$. The submanifolds transverse to the leaves are called \emph{transversals}; for example, the fibers of the maps $x''$ are local transversals. A transversal is called \emph{complete} when it meets all leaves. A \emph{foliated atlas} is a covering of $M$ by foliated charts.

If a smooth map $\phi:M'\to M$ transverse to (the leaves of) $\FF$, then the connected components of the inverse images of the leaves of $\FF$ are the leaves of the \emph{pull-back} $\phi^*\FF$, which is a smooth foliation on $M'$ of codimension $n'$. For the inclusion map of any open $U\subset M$, this defines the \emph{restriction} $\FF|_U$.

Foliations on manifolds with boundary can be similarly defined, with leaves tangent or transverse to the boundary. The concepts and properties of foliations considered here have obvious versions with boundary.

\subsection{Holonomy}\label{ss: holonomy}

Let $\{U_k,x_k\}$ be a foliated atlas of $\FF$ with $x_k=(x'_k,x''_k)$ and $x_k(U_k)=\Sigma_k\times B''_k$. Assume that it is \emph{regular} in the following sense: $\{U_k\}$ is locally finite, there are foliated charts $(V_k,y_k)$ with $\overline{U_k}\subset V_k$ and $y_k|_{U_k}=x_k$, and $U_k\cup U_l$ is in the domain of some foliated chart if $U_k\cap U_l\ne\emptyset$. Then, with the notation $\Sigma_{kl}=x'_k(U_k\cap U_l)$, the \emph{elementary holonomy transformations} $h_{kl}:\Sigma_{lk}\to\Sigma_{kl}$ are defined by $h_{kl}x'_l=x'_k$ on $U_k\cap U_l$. Let $\HH$ denote the representative of the \emph{holonomy pseudogroup} on $\Sigma:=\bigsqcup_k\Sigma_k$ generated by the local transformations $h_{kl}$. The $\HH$-orbit of every $\bar p\in\Sigma$ is denoted by $\HH(\bar p)$. The maps $x'_k$ define a homeomorphism between the leaf space $M/\FF$ and the orbit space $\Sigma/\HH$.

Let $c:I:=[0,1]\to L$ be a path in a leaf from $p\in L\cap U_k$ to $q\in L\cap U_l$, and let $\bar p=x'_k(p)\in\Sigma_k$ and $\bar q=x'_l(q)\in\Sigma_l$. Take a partition of $I$, $0=t_0<t_1<\dots<t_m=1$, and a sequence of indices, $k=k_1,k_2,\dots,k_m=l$, such that $c([t_{i-1},t_i])\subset U_{k_i}$ for $i=1,\dots,m$. Let $h_c=h_{k_mk_{m-1}}\cdots h_{k_2k_1}$. We have $\bar p\in\dom h_c\subset\Sigma_k$ and $\bar q=h_c(\bar p)\in\im h_c\subset\Sigma_l$. The germ $\bfh_c$ of $h_c$ at $\bar p$ is the (\emph{germinal}) \emph{holonomy} of $c$, and the tangent map $h_{c*}:T_{\bar p}\Sigma_k\to T_{\bar q}\Sigma_l$ is its \emph{infinitesimal holonomy}. End-point homotopic paths in $L$ define the same holonomy. Thus, taking $p=q$ and $k=l$, we get the \emph{holonomy homomorphism} onto the \emph{holonomy group}, $\bfh=\bfh_L:\pi_1L=\pi_1(L,p)\to\Hol L=\Hol(L,p)$, $[c]\mapsto\bfh_c$, which is independent of the foliated chart containing $p$ up to conjugation. The \emph{holonomy cover} $\widetilde L=\widetilde L^{\text{\rm hol}}$ of $L$ is defined by $\pi_1\widetilde L=\ker\bfh_L$. If\footnote{In abstract groups, the identity element is denoted by $e$.} $\Hol L=\{e\}$, it is said that $L$ has \emph{no holonomy}. The union of leaves without holonomy is a dense $G_\delta$ subset \cite{Hector1977a,EpsteinMillettTischler1977}. If all leaves have no holonomy, then $\FF$ is said to be \emph{without holonomy}. According to Reeb's local stability, if $L$ is compact, then the germ of $\FF$ at $L$ is determined by $\bfh_L$ using a construction called \emph{suspension} \cite[Section~2.7]{Haefliger1962} (see also \cite[Theorem~2.1.7]{HectorHirsch1981-A}, \cite[Theorem~IV.2]{CamachoLinsNeto1985}, \cite[Theorem~II.2.29]{Godbillon1991}, \cite[Theorem~2.3.9]{CandelConlon2000-I}). Similarly, we have the concepts of \emph{infinitesimal holonomy groups} of the leaves, and leaves/foliations \emph{without infinitesimal holonomy}. 

With the above notation, an element of $\Hol L$ is called \emph{quasi-analytic} if, either it is the identity, or it is represented by some local transformation $h$ such that $h|_V\ne\id_V$ for all open $V\subset\dom h$ with $\bar p\in\overline V$. $\Hol L$ is called \emph{quasi-analytic} when all of its elements are quasi-analytic.

In the case of codimension one, $\Hol L$ can be described by germs at $0$ of local transformations of $\R$. Then $\FF$ is said to be \emph{infinitesimally $C^\infty$-trivial} at $L$ if $h'(0)=1$ and $h^{(k)}(0)=0$ ($k>1$) for all local transformation $h$ representing an element of $\Hol L$. For instance, this property is satisfied if $\Hol L$ is generated by non-quasi-analytic elements.

\subsection{Infinitesimal transformations and transverse vector fields}\label{ss: infinitesimal transfs}

Let $T\FF\subset TM$ denote the subbundle of vectors tangent to the leaves, and let $N\FF=TM/T\FF$. The terms \emph{leafwise}\footnote{The terms ``\emph{tangent}'' or ``\emph{vertical}'' are also used instead of ``leafwise''.}/\emph{normal} are used for these vector bundles, their elements and smooth sections (vector fields). The leafwise vector fields form a Lie subalgebra and $C^\infty(M)$-submodule, $\fX(\FF)\subset\fX(M)$. Its normalizer is the Lie algebra $\fX(M,\FF)$ of \emph{infinitesimal transformations} of $(M,\FF)$, and $\olfX(M,\FF)=\fX(M,\FF)/\fX(\FF)$ is the Lie algebra of \emph{transverse vector fields}. An \emph{orientation} (respectively,  \emph{transverse orientation}) of $\FF$ is an orientation of the vector bundle $T\FF$ (respectively, $N\FF$).

For any $X$ in $TM$ (respectively, $\fX(M)$ or $\fX(M,\FF)$), let $\overline{X}$ denote the induced element of\footnote{The space of smooth sections of a vector bundle $E$ is denoted by $C^\infty(M;E)$.} $N\FF$ (respectively, $C^\infty(M;N\FF)$ or $\olfX(M,\FF)$). $N\FF$ becomes a \emph{leafwise flat} vector bundle with the canonical flat $T\FF$-partial connection $\nabla^\FF$ given by $\nabla^\FF_V\overline{X}=\overline{[V,X]}$ for $V\in\fX(\FF)$ and $X\in\fX(M)$. The leafwise parallel transport along any piecewise smooth path $c$ is the infinitesimal holonomy $h_{c*}:T_{\bar p}\Sigma_k\equiv N_p\FF\to T_{\bar q}\Sigma_l\equiv N_q\FF$. 

$\olfX(M,\FF)$ can be realized as the linear subspace of $C^\infty(M;N\FF)$ consisting of leafwise flat normal vector fields. The local projections $x'_k$ induce a canonical isomorphism of $\olfX(M,\FF)$ to the Lie algebra $\fX(\Sigma,\HH)$ of $\HH$-invariant tangent vector fields on $\Sigma$. The notation $\overline X$ is also used for the element of $\fX(\Sigma,\HH)$ that corresponds to $X\in\olfX(M,\FF)$.

When $M$ is not closed, let $\fXcom(\FF)\subset\fX(\FF)$ and $\fXcom(M,\FF)\subset\fX(M,\FF)$ denote the subsets of complete vector fields, and $\olfXcom(M,\FF)\subset\olfX(M,\FF)$ the projection of $\fXcom(M,\FF)$.

\subsection{Foliated maps and foliated flows}\label{ss: fol maps}

A (smooth) map between foliated manifolds, $\phi:(M_1,\FF_1)\to(M_2,\FF_2)$, is called \emph{foliated} if it maps leaves to leaves. Then its tangent map defines morphisms, $\phi_*:T\FF_1\to T\FF_2$ and $\phi_*:N\FF_1\to N\FF_2$, the second one being compatible with the leafwise flat structures.

Let $\Diffeo(M,\FF)\subset\Diffeo(M)$ be the subgroup of foliated diffeomorphisms. A smooth flow $\phi$ on $M$ is called \emph{foliated} if $\phi^t\in\Diffeo(M,\FF)$ for all $t$. This concept can be extended to a local flow $\phi:\Omega\to M$ by considering the restriction to $\Omega$ of the foliation on $M\times\R$ with leaves $L\times\{t\}$, for leaves $L$ of $\FF$ and points $t\in\R$. For $X\in\fX(M)$ (respectively, $X\in\fXcom(M)$), we have $X\in\fX(M,\FF)$ (respectively, $X\in\fXcom(M,\FF)$) if and only if its local flow (respectively, flow) is foliated.

For $X\in\fXcom(M,\FF)$ with foliated flow $\phi$, let $\bar\phi$ be the local flow on $\Sigma$ generated by $\overline X\in\fX(\Sigma,\HH)$, which  corresponds to $\phi$ via the maps $x'_k$. In an obvious sense, $\bar\phi$ is $\HH$-equivariant, and therefore it defines an $\HH$-equivariant local flow $\bar\phi$ on any other representative of the holonomy pseudogroup.

\subsection{Riemannian foliations}\label{ss: Riem folns}

The $\HH$-invariant structures on $\Sigma$ are called (\emph{invariant}) \emph{transverse structures}. A transverse orientation has this interpretation. Other examples are \emph{transverse Riemannian metrics} and \emph{transverse parallelisms}. Their existence defines the classes of (\emph{transversely}) \emph{Riemannian} and \emph{transversely parallelizable} (\emph{TP}) foliations. A Lie subalgebra $\fg\subset\fX(\Sigma,\HH)$ generated by a transverse parallelism is called a \emph{transverse Lie structure}, giving rise to the concept of ($\fg$-)\emph{Lie foliation}. 

Let $G$ be the simply connected Lie group with Lie algebra $\fg$. $\FF$ is a $\fg$-Lie foliation just when $\{U_k,x_k\}$ can be chosen so that every $\Sigma_k$ is realized as an open subset of $G$ and the maps $h_{kl}$ are restrictions of left translations. 

Using the canonical isomorphism $\olfX(M,\FF)\cong\fX(\Sigma,\HH)$, a transverse parallelism can be given by a global frame of $N\FF$ consisting of transverse vector fields $\overline{X_1},\dots,\overline{X_{n'}}$. This frame defines a transverse Lie structure when it is a base of a Lie subalgebra $\fg\subset\olfX(M,\FF)$. If moreover $\overline{X_1},\dots,\overline{X_{n'}}\in\olfXcom(M,\FF)$, the TP or Lie foliation $\FF$ is called \emph{complete}. 

Similarly, a transverse Riemannian metric can be described as a leafwise flat Euclidean structure on $N\FF$. It is induced by a \emph{bundle-like metric} on $M$, in the sense that the maps $x'_k$ are Riemannian submersions.

It is said that $\FF$ is \emph{transitive at} $p\in M$ when the evaluation map $\ev_p:\fX(M,\FF)\to T_pM$ is surjective, or, equivalently, the evaluation map $\overline{\ev}_p:\olfX(M,\FF)\subset C^\infty(M;N\FF)\to N_p\FF$ is surjective. Similarly, $\FF$ is called \emph{transversely complete} (\emph{TC}) \emph{at} $p$ if $\ev_p(\fXcom(M,\FF))$ generates $T_pM$, or, equivalently, $\overline{\ev}_p(\olfXcom(M,\FF))$ generates $N_p\FF$. The transitive/TC point set is open and saturated. $\FF$ is called \emph{transitive}/\emph{TC} if it is transitive/TC at every point \cite[Section~4.5]{Molino1988}. 

TP foliations are transitive, and transitive foliations are Riemannian. In turn, Molino's theory describes Riemannian foliations in terms of TP foliations \cite{Molino1988}. A Riemannian foliation is called \emph{complete} if, using Molino's theory, the corresponding TP foliation is TC. Furthermore Molino's theory describes TC foliations in terms of complete Lie foliations with dense leaves. On the other hand, complete Lie foliations have the following description due to Fedida \cite{Fedida1971,Fedida1973} (see also \cite[Theorem~4.1 and Lemma~4.5]{Molino1988}). Assume that $M$ is connected and $\FF$ a complete $\fg$-Lie foliation. Let $G$ be the simply connected Lie group with Lie algebra $\fg$. Then there is a regular covering $\pi:\widetilde M\to M$, a fiber bundle $D:\widetilde M\to G$ (the \emph{developing} map) and a monomorphism\footnote{$\Aut(\pi)$ denotes the group of deck transformations of the covering $\pi:\widetilde M\to M$.} $h:\Gamma:=\Aut(\pi)\equiv\pi_1M/\pi_1\widetilde M\to G$ (the \emph{holonomy} homomorphism) such that the leaves of $\widetilde\FF:=\pi^*\FF$ are the fibers of $D$, and $D$ is $h$-equivariant with respect to the left action of $G$ on itself by left translations. As a consequence, $\pi$ restricts to diffeomorphisms between the leaves of $\widetilde\FF$ and $\FF$. The subgroup $\Hol\FF:=\im h\subset G$, isomorphic to $\Gamma$, is called the \emph{global holonomy group}. Since $D$ induces an identity $\widetilde M/\widetilde\FF\equiv G$, the $\pi$-lift and $D$-projection of vector fields define identities
\begin{equation}\label{olfX(M, FF) equiv ... equiv fX(G, Hol FF)}
\olfX(M,\FF)\equiv\olfX(\widetilde M,\widetilde\FF,\Gamma)\equiv\fX(G,\Hol\FF)\;,
\end{equation}
where a group within the parentheses to denote subspaces of invariant sections\footnote{This is preferred rather than the usual subindex to agree with $\fX(\Sigma,\HH)$ and $\fX(M,\FF)$.}. These identities give a precise realization of $\fg\subset\olfX(M,\FF)$ as the Lie algebra of left invariant vector fields on $G$. The holonomy pseudogroup of $\FF$ is equivalent to the pseudogroup on $G$ generated by the action of $\Hol\FF$ by left translations. Thus the leaves are dense if and only if $\Hol\FF$ is dense in $G$, which means $\fg=\olfX(M,\FF)$.

\subsection{Homogeneous foliations}\label{ss: homogeneous folns}

More generally, consider the homogeneous space $S=G/H$, defined by a closed subgroup of a connected Lie group, $H\subset G$. It is said that $\FF$ is a (\emph{transversely}) \emph{homogeneous} ($(G,S)$-) \emph{foliation} if $\{U_k,x_k\}$ can be chosen so that every $\Sigma_k$ is realized as an open subset of $S$ and the maps $h_{kl}$ are restrictions of the action of elements of $G$. In this case, there is a regular covering $\pi:\widetilde M\to M$, a smooth submersion $D:\widetilde M\to S$ and a monomorphism $h:\Gamma:=\Aut(\pi)\equiv\pi_1L/\pi_1\widetilde L\to G$ such that the leaves of $\widetilde\FF:=\pi^*\FF$ are the connected components of the fibers of $D$, and $D$ is $h$-equivariant \cite{Blumenthal1979} (see also \cite[Section~III.3]{Godbillon1991}). The terms of Fedida's description are also used in this case, as well as the notation $\Hol\FF=\im h$. This description is determined up to conjugation in $G$ in an obvious sense. Now $\widetilde M/\widetilde\FF$ is a possibly non-Hausdorff smooth manifold, and $D$ induces a local diffeomorphism $\overline D:\widetilde M/\widetilde\FF\to S$, which is $h$-equivariant with respect to the induced $\Gamma$-action on $\widetilde M/\widetilde\FF$. Like in~\eqref{olfX(M, FF) equiv ... equiv fX(G, Hol FF)}, we get  
\begin{equation}\label{olfX(M, FF) supset fX(im D, Hol FF)}
\olfX(M,\FF)\equiv\olfX(\widetilde M,\widetilde\FF,\Gamma)
\equiv\fX(\widetilde M/\widetilde\FF,\Gamma)\supset\fX(\im D,\Hol\FF)\;.
\end{equation}
The holonomy pseudogroup of $\FF$ is equivalent to the pseudogroup generated by the action of $\Gamma$ on $\widetilde M/\widetilde\FF$. In particular, for leaves, $L$ of $\FF$ and $\widetilde L$ of $\widetilde\FF$ with $\pi(\widetilde L)=L$ and $D(\widetilde L)=x\in S$, we have
\begin{equation}\label{Hol L < Hol_x FF}
\Hol L\equiv\{\,\gamma\in\Gamma\mid\gamma\cdot\widetilde L=\widetilde L\,\}
\cong h(\{\,\gamma\in\Gamma\mid\gamma\cdot\widetilde L=\widetilde L\,\})\subset\Hol_x\FF\;,
\end{equation}
where $\Hol_x\FF\subset\Hol\FF$ is the isotropy subgroup at $x$.

\section{Some classes of foliations of codimension one}\label{s: folns of codim 1}

\subsection{Preliminary considerations}\label{ss: prelim,  folns of codim 1}

Let $\FF$ be a smooth foliation of codimension one on a closed $n$-manifold $M$. Suppose that $\FF$ is transversely oriented, obtaining\footnote{We use the notation $\Lambda=\Lambda M=\bigwedge T^*M$.} $\omega,\theta\in C^\infty(M;\Lambda^1)$ such that $\omega$ defines\footnote{This means that $T\FF = \ker\omega$ and the transverse orientation is induced by $\omega$ on $N\FF$.} $\FF$ (with its transverse orientation) and $d\omega=\theta\wedge\omega$. There is some $X\in\fX(M)$ with $\omega(X)=1$; in fact, $\overline X\in C^\infty(M;N\FF)$ and $\omega$ determine each other. Note that $\FF$ is Riemannian just when $\omega$ can be chosen so that $d\omega=0$ ($\theta=0$); i.e., $X\in\fX(M,\FF)$. Actually, $\FF$ is an $\R$-Lie foliation in this case because $\R\,\overline{X}$ is a Lie subalgebra of $\olfX(M,\FF)$.

Take any leaf $L$ and $p\in L$, and a local transversal $\Sigma\equiv(-\epsilon,\epsilon)$ through $p\equiv0$ so that the transverse orientation corresponds to the standard orientation of $(-\epsilon,\epsilon)$. Since the holonomy maps defining the elements of $\Hol(L,p)$ preserve the orientation of $(-\epsilon,\epsilon)$, they can be restricted to $(-\epsilon,0]$ and $[0,\epsilon)$, defining the \emph{lateral holonomy groups} $\Hol_\pm(L,p)=\Hol_\pm L$. 

Recall that $L$ is said to be \emph{locally dense} if it is dense in some open saturated set. On the other hand, $L$ is said to be \emph{resilient} if there is some element of $\Hol(L,p)$, represented by some local diffeomorphism $f$ defined around $p$ in $\Sigma$, and there is some $q\ne p$ in $L\cap\dom f$ such that the sequence $f^k(q)$ is defined and converges to $p$.

Now a smooth connected closed transversal of $\FF$ is a smooth embedding $c:S^1\to M$ transverse to the leaves. It always has a (closed) tubular neighborhood $\varpi:T\to c(S^1)\equiv S^1$ in $M$, which can be chosen to be \emph{foliated} in the sense that its fibers are $(n-1)$ disks in the leaves. If $\FF$ is also oriented, then $\varpi$ trivial, $T\equiv S^1_\varpi\times D^{n-1}$, where $D^{n-1}$ is the standard disk in $\R^{n-1}$.

\subsection{$\R$-Lie foliations}\label{ss: R-Lie folns}

 Suppose that $\FF$ is a transversely complete $\R$-Lie foliation. This means that there is some $Z\in\fXcom(M,\FF)$ such that $\overline Z\ne0$ everywhere. Equivalently,  the orbits of the foliated flow $\phi$ of $Z$ are transverse to $\FF$. The Fedida's description of $\FF$ is given by a regular covering map $\pi:\widetilde M\to M$, a holonomy homomorphism $h:\Gamma:=\Aut(\pi)\to\R$, and the developing map $D:\widetilde M\to\R$ (Section~\ref{ss: Riem folns}). Thus $\Gamma\cong\im h\subset\R$ is abelian and torsion free. Let $\widetilde Z$ and $\tilde\phi$ be the lifts of $Z$ and $\phi$ to $\widetilde M$. Then $\widetilde Z$ is $\Gamma$-invariant and $D$-projectable. Without lost of generality, we can assume $D_*\widetilde Z=\partial_x\in\fX(\R)$, where $x$ denotes the standard global coordinate of $\R$. Thus $\tilde\phi$ is $\Gamma$-equivariant and induces via $D$ the flow $\bar\phi$ on $\R$ defined by $\bar\phi^t(x)=t+x$. This is the equivariant local flow induced by $\phi$ on this representative of the holonomy pseudogroup (Section~\ref{ss: Riem folns}). It is easy to check that $\phi^t$ preserves every leaf of $\FF$ if and only if $t\in\Hol\FF$.
 
 \begin{ex}\label{ex: Kronecker's flow}
 The simplest example of minimal $\R$-Lie foliation on a closed manifold is the Kronecker's flow on the torus $T^2\equiv\R^2/\Z^2$ \cite[Example~1.1.5]{CandelConlon2000-I}. It is induced by a foliation on $\R^2$ by parallel lines with irrational slope. This construction has an obvious generalization to higher dimensions, obtaining minimal $\R$-Lie foliations on every torus $T^n\equiv\R^n/\Z^n$ induced by foliations on $\R^n$ by appropriate parallel hyperplanes \cite[Example~1.1.8]{CandelConlon2000-I}.
 \end{ex}

\subsection{Foliations almost without holonomy}\label{ss: folns almost w/o hol}

Recall that $\FF$ is said to be \emph{almost without holonomy} when all non-compact leaves have no holonomy. The structure of such a foliation was described by Hector using the following \emph{model foliations} $\GG$ on compact manifolds $N$ (possibly with boundary) \cite[Structure Theorem]{Hector1972c}, \cite[Theorem~1]{Hector1978}:
\begin{enumerate}\addtocounter{enumi}{-1}

\item\label{i: model 0} $\GG$ is given by a trivial bundle over $[0,1]$,

\item\label{i: model 1} $\mathring\GG:=\GG|_{\mathring N}$ is given by a fiber bundle over $S^1$, or 

\item\label{i: model 2} all leaves of $\mathring\GG$ are dense in $\mathring N$.

\end{enumerate}
In the case where $\FF$ has finitely many leaves with holonomy, Hector's description is as follows. Let $M^0$ be the finite union of compact leaves with holonomy. Let $M^1=M\sm M^0$, whose connected components are denoted by $M^1_l$, with $l$ running in a finite index set, and let $\FF^1_l=\FF|_{M^1_l}$. For every $l$, there is a connected compact manifold\footnote{Since $M_l$ is the metric completion of $M^1_l$, the notation $\widehat M^1_l$ and $\widehat\FF^1_l$ would be more standard. But the notation $M_l$ is more appropriate for our use in \cite{AlvKordyLeichtnam-atffff} involving b-calculus.} $M_l$, possibly with boundary, endowed with a smooth transversely oriented foliation $\FF_l$ tangent to the boundary, sutisfying the following. Equipping $\bfM:=\bigsqcup_lM_l$ with the combination $\bfFF$ of the foliations $\FF_l$, there is a foliated smooth local embedding $\bfpi:(\bfM,\bfFF)\to(M,\FF)$, preserving the transverse orientations, so that $\bfpi:\mathring M_l\to M^1_l$ is a diffeomorphism for all $l$ (we may write $\mathring M_l\equiv M^1_l$), $\bfpi:\partial \bfM\to M^0$ is a $2$-fold covering map, and every $\FF_l$ is a model foliation. $M$ can be described by gluing the manifolds $M_l$ along corresponding pairs of boundary components. Equivalently, $\bfM$ can be described by cutting $M$ along $M^0$ (Section~\ref{ss: collar}).  Thus $\partial \bfM\equiv M^0\sqcup M^0$, and $\bfpi$ defines diffeomorphisms between corresponding connected components of $\partial\bfM$ and $M^0$.

\begin{rem}\label{r: Hector's description of folns almost w/o hol}
\begin{enumerate}[(i)]

\item\label{i: Hol_pm L} (See \cite[Lemma~7]{Hector1978} and its proof.) For indices $l_\pm$, and boundary leaves $L_\pm$ of $\FF_{l_\pm}$ with $L:=\bfpi(L_+)=\bfpi(L_-)$, we have $\Hol(L_\pm)\equiv\Hol_\pm L$. $\Hol_\pm L$ is the germ group at $0$ of a pseudogroup $\HH_{L,\pm}$ of local transformations of $\R^\pm\cup\{0\}$, generated by a (possibly empty) set of contractions and dilations defined around $0$. It follows that $\Hol_\pm L$ is an Archimedean totally ordered group, and therefore it is isomorphic to a subgroup of $(\R,+)$, obtaining that $\Hol L$ is abelian and torsion free. It is easy to see that the orbits of $\HH_{L,\pm}$ on $\R^\pm$ are singletons (respectively, monotone sequences with limit $0$, or dense) just when the rank of $\Hol_\pm L$ is $0$ (respectively, $1$, or $>1$). 

\item\label{i: the leaves of FF_l are compact} If $\FF_l$ is a model~(\ref{i: model 0}), or a model~(\ref{i: model 1}) with $\partial M_l=\emptyset$ ($M_l=M$ and $M^0=\emptyset$), then the leaves of $\FF_l$ are compact. 

\item\label{i: the leaves of mathring FF_l approach any leaf in the boundary} If $\FF_l$ is a model~(\ref{i: model 1}) with $\partial M_l\ne\emptyset$, or a model~(\ref{i: model 2}), then the leaves of $\mathring\FF_l$ are not compact. In fact, the whole of $\partial M_l$ is contained in the closure of every leaf of $\mathring\FF_l$. Hence, according to~(\ref{i: Hol_pm L}), the holonomy groups of the boundary leaves of $\FF_l$ are of rank $1$ (respectively, $>1$) if and only if $\FF_l$ is a model~(\ref{i: model 1}) with $\partial M_l\ne\emptyset$ (respectively, a model~(\ref{i: model 2})). 

\item\label{i: models (2) are Lie foliations} If $\FF_l$ is a model~(\ref{i: model 2}), then $\mathring\FF_l$ becomes a complete $\R$-Lie foliation after a possible change of the differentiable structure of $\mathring M_l$, keeping the same differentiable structure on the leaves \cite[Theorem~2]{Hector1978}. If moreover $\partial M_l=\emptyset$, then $\FF$ is homeomorphic to a minimal $\R$-Lie foliation.

\item\label{i: no resilient leaves} $\FF^1$ has no holonomy, and therefore $\FF$ has no resilient leaves. This holds because $\mathring\FF_l$ is given by a fiber bundle in the models~(\ref{i: model 0}) and~(\ref{i: model 1}), and is homeomorphic to a Lie foliation in the model~(\ref{i: model 2}) by~(\ref{i: models (2) are Lie foliations}).

\item\label{i: adding leaves to M^0} According to~(\ref{i: the leaves of FF_l are compact}) and~(\ref{i: the leaves of mathring FF_l approach any leaf in the boundary}), the description holds as well if $M^0$ is any finite union of compact leaves, including all leaves with holonomy. Thus, if $\FF_l$ is a model~(\ref{i: model 1}) with $\partial M_l=\emptyset$, then $M_l=M$ can be cut into models~(\ref{i: model 0}) by adding compact leaves to $M^0$. Conversely, if all foliations $\FF_l$ are models~(\ref{i: model 0}), then $\FF$ is a model~(\ref{i: model 1}) with $\partial M=\emptyset$.

\item\label{i: closed transversals} In the models~(\ref{i: model 1}) and~(\ref{i: model 2}), $\mathring\FF_l$ has smooth complete closed transversals (see \cite[Lemma~3.3.7]{CandelConlon2000-I}).

\end{enumerate}
\end{rem}

\begin{prop}\label{p: quasi-analytic => the same models}
If $\Hol L$ is quasi-analytic for all leaf $L\subset M^0$, then all foliations $\FF_l$ have the same model.
\end{prop}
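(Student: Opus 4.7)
The strategy is to show that whenever two pieces $M_{l_+}$ and $M_{l_-}$ share a common boundary leaf $L \subset M^0$ (with corresponding boundary leaves $L_\pm \subset \partial M_{l_\pm}$ and $\bfpi(L_\pm) = L$), and $\Hol L$ is quasi-analytic, then $\FF_{l_+}$ and $\FF_{l_-}$ have the same model. The global conclusion then follows from connectedness of $M$.

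For the local step, I would construct, for each compact leaf $L \subset M^0$, two restriction homomorphisms
\[
r_\pm : \Hol L \longrightarrow \Hol_\pm L
\]
by restricting the germ at $0$ of a two-sided holonomy transformation on $(-\epsilon,\epsilon)$ to germs on $[0,\epsilon)$ and $(-\epsilon,0]$, respectively. Surjectivity of $r_\pm$ comes from the fact that $\bfpi|_{L_\pm} : L_\pm \to L$ is a diffeomorphism, inducing an isomorphism $\pi_1(L_\pm) \cong \pi_1(L)$ compatible with the holonomy homomorphisms $\pi_1(L) \to \Hol L$ and $\pi_1(L_\pm) \to \Hol(L_\pm) \equiv \Hol_\pm L$ (cf.\ Remark~\ref{r: Hector's description of folns almost w/o hol}(\ref{i: Hol_pm L})).

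The key step is to show that quasi-analyticity forces $r_\pm$ to be injective. Indeed, if some nontrivial $\bfh \in \Hol L$ satisfies $r_+(\bfh) = e$, then the germ at $0$ of every representative $h$ of $\bfh$ on $[0,\epsilon)$ is trivial, so any such $h$ must equal the identity on some half-open interval $[0,\epsilon')$. Taking $V = (0,\epsilon') \subset \dom h$ yields an open set with $0 \in \overline V$ on which $h$ agrees with the identity, contradicting the definition of quasi-analyticity; the argument for $r_-$ is identical. Combined with surjectivity, each $r_\pm$ is an isomorphism, hence $\Hol_+ L \cong \Hol L \cong \Hol_- L$, and these lateral holonomy groups have the same rank.

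By Remark~\ref{r: Hector's description of folns almost w/o hol}(\ref{i: the leaves of mathring FF_l approach any leaf in the boundary}) (augmented by the observation that in model~(\ref{i: model 0}) the boundary leaves have trivial holonomy), the rank of $\Hol_\pm L$ determines the model type of $\FF_{l_\pm}$: rank $0$ corresponds to model~(\ref{i: model 0}), rank $1$ to model~(\ref{i: model 1}) with nonempty boundary, and rank strictly greater than $1$ to model~(\ref{i: model 2}). Equal ranks on the two sides of $L$ therefore force equal models. To globalize, I would note that the dual graph whose vertices are the pieces $M_l$ and whose edges are the leaves of $M^0$ (allowing self-loops when $l_+ = l_-$) is connected because $M$ is connected; propagating equality of model along edges then yields that all $\FF_l$ share the same model. (If $M^0 = \emptyset$, either there is a single piece or the hypothesis is vacuous, so nothing is to prove.) The main subtlety lies in the injectivity step: one must carefully use that $\Hol_\pm L$ are groups of germs, so triviality of a one-sided restriction automatically produces an actual half-open interval of identity rather than merely a sparse fixed set---only then does the defining property of quasi-analyticity directly yield the required contradiction.
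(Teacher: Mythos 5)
Your proof is correct and follows essentially the same route as the paper's: quasi-analyticity forces the one-sided restriction maps $\Hol L\to\Hol_\pm L$ to be isomorphisms, the rank of the lateral holonomy groups then pins down the model of each adjacent piece via Remark~\ref{r: Hector's description of folns almost w/o hol}, and connectedness of $M$ propagates the conclusion. The paper states the isomorphism $\Hol_+L\cong\Hol_-L\cong\Hol L$ without spelling out the injectivity argument, so your write-up is simply a more detailed version of the same proof.
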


\begin{proof}
For all leaves $L\subset M^0$, we have $\Hol_+L\cong\Hol_-L\cong\Hol L$ by the hypothesis on $\Hol L$. Then, by Remark~\ref{r: Hector's description of folns almost w/o hol}~(\ref{i: Hol_pm L})--(\ref{i: the leaves of mathring FF_l approach any leaf in the boundary}) and since $M$ is connected, the rank of the holonomy groups of all boundary leaves of all foliations $\FF_l$ is simultaneously $0$, $1$ or $>1$, and all foliations $\FF_l$ have the same model.
\end{proof}

\begin{ex}\label{ex: Reeb components}
A Reeb component on $D^{n-1}\times S^1$ is a model~(\ref{i: model 1}) \cite[Examples~1.1.12 and~3.3.11]{CandelConlon2000-I}, \cite[Example~I.3.14~(i)]{Godbillon1991}, \cite[Section~II.1.4.4]{HectorHirsch1981-A}. All of the Reeb components on $D^{n-1}\times S^1$ are homeomorphic, but they may not be diffeomorphic.

The Reeb components on $D^1\times S^1=[-1,1]\times S^1$ can be described as follows. Let $f:(-1,1)\to\R$ be a smooth function such that $|f^{(k)}(x)|\to\infty$ as $x\to\pm1$ for all order $k$. Then the graphs of the functions $f+c$ ($c\in\R$) are the interior leaves of a smooth foliation tangent to the boundary on the strip $[-1,1]\times\R$, which induces a smooth foliation $\GG$ on $[-1,1]\times S^1\equiv[-1,1]\times\R/\Z$. Its boundary leaves are $L_\pm=\{\pm1\}\times S^1$. The following examples of $f$ produce non-diffeomorphic foliations:
\begin{enumerate}[(i)]

\item\label{i: f(x) = exp 1/(1-x^2)} If $f(x)=\exp\frac{1}{1-x^2}$, then $\GG$ is infinitesimally $C^\infty$-trivial at $L_\pm$.

\item\label{i: f(x) = x^2/(1-x^2)} If $f(x)=\frac{x^2}{1-x^2}$, then $\GG$ is not infinitesimally $C^\infty$-trivial at $L_\pm$, but $L_\pm$ is without infinitesimal holonomy. 


\item\label{i: |f(x)| = ln(1-|x|)^mu} If $|f(x)|=\ln(1-|x|)^\mu$ ($\mu>0$) for $1-|x|$ small enough, then $\Hol L_\pm$ is generated by the germ of $u\mapsto e^{1/\mu}u$ at $0$ in $[0,\infty)$.

\end{enumerate}
\end{ex}

\begin{ex}\label{ex: tangential gluing of models}
Let $\GG_a$ ($a=1,2$) be transversely oriented models~(\ref{i: model 1}) or~(\ref{i: model 2}) of dimension $>1$ on manifolds $N_a$. If there is a diffeomorphism $\phi$ between boundary leaves, $L_a$ of $\GG_a$, then a tangential gluing via $\phi$ can be made, obtaining a foliation $\GG$ on $N:=N_1\cup_\phi N_2$, with the compact leaf $L:=L_1\cup_\phi L_2\subset\mathring N$ \cite[Section~3.4]{CandelConlon2000-I}, \cite[Example~I.3.14~(i)]{Godbillon1991}, \cite[Theorem~IV.4.2.2]{HectorHirsch1983-B}. $\GG$ may not be smooth. It is smooth only when, for all $\sigma\in\pi_1L_1$, the combination of representatives of $\bfh_\sigma$ and $\bfh_{\phi_*\sigma}$ are smooth maps (considering the elements of $\Hol L_1$ and $\Hol L_2$ as germs at $0$ of local transformations of $(-\infty,0]$ and $[0,\infty)$, respectively). For example, this is true if every $\bfh_\sigma$ and $\bfh_{\phi_*\sigma}$ are germs of homotheties at $0$ with the same ratio. This property is also guaranteed when every $\GG_a$ is infinitesimally $C^\infty$-trivial at $L_a$ \cite[Proposition~3.4.2]{CandelConlon2000-I}. 

We can continue making tangential gluing of models to produce a foliation $\FF$ on a closed manifold $M$. If every tangential gluing preserves smoothness, then $\FF$ is almost without holonomy with finitely many leaves with holonomy. The following are some examples of foliations obtained in this way:
\begin{enumerate}[(i)]

\item\label{i: Reeb foln} The Reeb foliation $\FF$ on $S^3$ is almost without holonomy and has one compact leaf $L$. It is obtained by tangential gluing of two Reeb components on $D^2\times S^1$, so that the gluing map interchanges meridian and longitude in the boundary leaves $S^1\times S^1$ \cite[Example~3.4.3 and Exercise~3.4.4]{CandelConlon2000-I}, \cite[Examples~I.3.14]{Godbillon1991}. Since $\Hol L$ has non-quasi-analytic generators, the Reeb components must be infinitesimally $C^\infty$-trivial at the boundary leaves to get smoothness of $\FF$.

\item\label{i: foln on S^n-1 times S^1} Let $\FF$ be foliation on $S^{n-1}\times S^1$ obtained by tangential gluing of two Reeb components on $D^{n-1}\times S^1$ using the identity map on the boundary leaves $S^{n-2}\times S^1$. $\FF$ becomes smooth if the Reeb components are infinitesimally $C^\infty$-trivial at the boundary leaves, but now this condition is not necessary to get smoothness (see Example~\ref{ex: trasv aff foln on S^n-1 times S^1} below).

\item\label{i: foln on T^2 or K} A smooth foliation $\FF$ on the $2$-torus or on the Klein bottle can be constructed by tangential gluing of Reeb components on $[-1,1]\times S^1$ of the type in Example~\ref{ex: Reeb components}~(\ref{i: |f(x)| = ln(1-|x|)^mu}), all of them constructed with the same constant $\mu$. The holonomy groups of the leaves with holonomy are generated by the germ of $u\mapsto e^{1/\mu}u$ at $0$ in $\R$.

\end{enumerate}
\end{ex}

\begin{ex}\label{ex: connected sum of folns} 
Let $\FF$ and $\GG$ be oriented and transversely orientable foliations of codimension one on closed $n$-manifolds $M$ and $N$ ($n\ge2$). Suppose that both of them are almost without holonomy, and that they have finitely many leaves with holonomy. Take smooth closed transversals, $c:S^1\to M^1$ of $\FF^1$ and $d:S^1\to N^1$ of $\GG^1$ (Remark~\ref{r: Hector's description of folns almost w/o hol}~(\ref{i: closed transversals})), and let $\FF'$ be the connected sum of $\FF$ and $\GG$ along $c$ and $d$ \cite[Example~I.2.20~(i)]{Godbillon1991}. 
$\FF'$ is another transversely orientable foliation almost without holonomy on a closed manifold, and it has finitely many leaves with holonomy.

For models~(\ref{i: model 1}) or~(\ref{i: model 2}), we can also consider their connected sum along smooth closed transversals in their interior. The result is a model~(\ref{i: model 1}) if both foliations are models~(\ref{i: model 1}), and a model~(\ref{i: model 2}) otherwise.
\end{ex}

\begin{ex}\label{ex: turbulization} 
Let $\FF$ be an oriented and transversely orientable foliation of codimension one on a closed $n$-manifold $M$. Suppose that $\FF$ is almost without holonomy, and that it has finitely many leaves with holonomy.  Let $(M',\FF')$ be the turbulization of $(M,\FF)$ along a smooth closed transversal $c:S^1\to M^1$ of $\FF^1$ \cite[Example~3.3.11]{CandelConlon2000-I}, \cite[Section~I.2.18]{Godbillon1991}. $\FF'$ is another transversely orientable foliation almost without holonomy, and it has finitely many leaves with holonomy. Actually, $\FF'$ can be considered as a connected sum along $c$ of $\FF$ and the foliation of Example~\ref{ex: tangential gluing of models}~(\ref{i: foln on S^n-1 times S^1}). 

The turbulization can be also applied to a model~(\ref{i: model 1}) or~(\ref{i: model 2}) along a smooth closed transversal in its interior. After removing the interior of the resulting Reeb component, we get a model of the same type.
\end{ex}

\subsection{Transversely affine foliations}\label{ss: transv affine folns}

Consider $\R$ as the homogeneous space defined by the canonical action of $\Aff^+(\R)$, the Lie group of its orientation preserving affine transformations. It is said that $\FF$ is \emph{transversely affine} if it is a transversely homogeneus $(\Aff^+(\R),\R)$-foliation\footnote{We only consider transversely affine foliations that are transversely oriented. The group $\Aff(\R)$ of affine transformations would define transversely affine foliations that may not be transversely oriented.}. This means that, according to Section~\ref{ss: prelim,  folns of codim 1}, $\omega$ and $\theta$ can be chosen so that $d\theta=0$ \cite{Seke1980}; it will be said that the transversely affine foliation $\FF$ is defined by $(\omega,\theta)$. In this case, the description of Section~\ref{ss: homogeneous folns} is given by $\pi:\widetilde M\to M$, $D:\widetilde M\to\R$, $h:\Gamma\to\Aff^+(\R)$ and $\Hol\FF\subset\Aff^+(\R)$. 

Assume that $\FF$ is transversely affine. Then $\Gamma\ne\{e\}$ because $D(\widetilde M)$ is open in $\R$. Furthermore $\FF$ has a finite number of compact leaves with holonomy \cite[Proposition~III.3.10]{Godbillon1991}, but non-compact leaves may also have holonomy. A theorem of Inaba \cite[Theorem~1.2]{Inaba1989} states that, either $\FF$ is almost without holonomy and $\Hol\FF$ is abelian (the \emph{elementary} case), or $\FF$ has a locally dense resilient leaf and $\Hol\FF$ is non-abelian.

From now on, consider only the elementary case. Then:
\begin{enumerate}[(a)]

\item\label{i: a group of translations} either $\Hol\FF$ is a group of translations; or 

\item\label{i: a group of homotheties} $\Hol\FF$ is conjugate by some translation to a group of homotheties. 

\end{enumerate}

In the case~(\ref{i: a group of translations}), $\FF$ is an $\R$-Lie foliation on a closed manifold, whose Fedida's description is given by $\pi$, $D$ and $h$; in particular, $\im D=\R$. 

In the case~(\ref{i: a group of homotheties}), after conjugation, we can assume that $\Hol\FF$ is indeed a group of homotheties. Since $\im D$ is $\Hol\FF$-invariant and $\Hol\FF\ne\{\id_\R\}$, either $\im D=\R^\pm$, or $\im D=\R$. If $\im D=\R^\pm$, we can pass to a group of translations by using $\ln|D|$ instead of $D$. Thus, if $\FF$ is not an $\R$-Lie foliation, we can assume that $\Hol\FF$ is a non-trivial group of homotheties and $\im D=\R$. Let us analyze this case using the notation of Section~\ref{ss: folns almost w/o hol}.

\begin{lem}\label{l: M^0 = pi(D^{-1}(0)), ...}
\begin{enumerate}[(i)]

\item\label{i: M^0 = pi(D^{-1}(0))} $M^0=\pi(D^{-1}(0))$.

\item\label{i: non-trivial subgroups of Gamma_0} The holonomy groups of leaves in $M^0$ are isomorphic to non-trivial subgroups of $\Hol_0\FF$.

\item\label{i: the same model} All foliations $\FF_l$ have the same model, either~(\ref{i: model 1}) with $\partial M_l\ne\emptyset$, or~(\ref{i: model 2}).

\end{enumerate}
\end{lem}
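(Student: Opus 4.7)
The plan is to use the canonical identification \eqref{Hol L < Hol_x FF} supplied by the transversely homogeneous $(\Aff^+(\R),\R)$-structure, together with Hector's description of almost-without-holonomy foliations. Throughout, write $\widetilde L$ for a chosen component of a fiber of $D$ lying over $L$; recall that $\Hol\FF$ consists of homotheties and fixes $0$ only (every other $x\in\R$ has trivial isotropy $\Hol_x\FF=\{e\}$), and that in the elementary case $\FF$ is almost without holonomy, so leaves with non-trivial holonomy are automatically compact.

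For~(\ref{i: M^0 = pi(D^{-1}(0))}), the inclusion $M^0\subseteq\pi(D^{-1}(0))$ is the easy direction: if $L\subset M^0$ and $D(\widetilde L)=x$, then \eqref{Hol L < Hol_x FF} gives $\Hol L\hookrightarrow\Hol_x\FF$, and non-triviality of $\Hol L$ forces $x=0$. For the reverse inclusion I would argue that any $L\subset\pi(D^{-1}(0))$ has non-trivial holonomy, whence $L$ is compact (by the almost-without-holonomy hypothesis) and lies in $M^0$. The non-triviality of $\Hol L$ will be derived from \eqref{Hol L < Hol_x FF} together with the fact that $\Gamma$ acts on the (locally finite) set of components of $D^{-1}(0)$ so that stabilizers $\Gamma_{\widetilde L}$ are forced to be non-trivial; this is where the elementary structure enters, because the finitely many compact leaves correspond exactly to the fixed point $0$ of the non-trivial homothety group $\Hol\FF$. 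Part~(\ref{i: non-trivial subgroups of Gamma_0}) is then immediate: \eqref{Hol L < Hol_x FF} with $x=0$ yields $\Hol L\cong h(\Gamma_{\widetilde L})\subset\Hol_0\FF$, and non-triviality is precisely the condition that $L\subset M^0$.

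For~(\ref{i: the same model}), the key observation is that by (\ref{i: non-trivial subgroups of Gamma_0}) every $\Hol L$ with $L\subset M^0$ is generated by germs at $0$ of non-trivial homotheties $u\mapsto\lambda u$ with $\lambda\neq1$, each of which fixes only $0$ and is therefore quasi-analytic; hence $\Hol L$ is quasi-analytic, and Proposition~\ref{p: quasi-analytic => the same models} forces all $\FF_l$ to share a common model. To finish, I rule out the two remaining models:~(\ref{i: model 0}) is impossible because boundary leaves of a trivial bundle $[0,1]\times F$ have trivial holonomy, contradicting (\ref{i: non-trivial subgroups of Gamma_0}); and~(\ref{i: model 1}) with $\partial M_l=\emptyset$ is impossible because it would give $\bfpi\colon\partial\bfM\to M^0$ with empty source, so $M^0=\emptyset$, contradicting $\pi(D^{-1}(0))\neq\emptyset$ from (\ref{i: M^0 = pi(D^{-1}(0))}) (since $\im D=\R\ni0$).

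The main obstacle is the backward inclusion in~(\ref{i: M^0 = pi(D^{-1}(0))}): confirming that every leaf $L=\pi(\widetilde L)$ with $\widetilde L\subset D^{-1}(0)$ has some non-trivial stabilizer in $\Gamma$, i.e.\ non-trivial holonomy. A priori one only knows that $\Gamma$ permutes the components of $D^{-1}(0)$; a free permutation would yield leaves over $0$ without holonomy and spoil the equality. Resolving this requires exploiting the elementary structure—presumably by showing that leaves over $0$ cannot be non-compact (otherwise they would be dense in some open saturated region inconsistent with their $D$-image being the single point $0$), which together with almost-without-holonomy and the finiteness of compact leaves in the elementary case forces the stabilizers to be non-trivial.
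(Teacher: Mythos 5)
Your treatment of the inclusion $M^0\subset\pi(D^{-1}(0))$, of part~(\ref{i: non-trivial subgroups of Gamma_0}), and your appeal to Proposition~\ref{p: quasi-analytic => the same models} (germs of non-trivial homotheties are quasi-analytic) all match the paper. The genuine gap is exactly where you flag it: the reverse inclusion $\pi(D^{-1}(0))\subset M^0$. Your proposed resolution does not work as stated. A non-compact leaf need not be locally dense --- the interior leaves of a model~(\ref{i: model 1}) with non-empty boundary are proper, non-compact leaves spiralling onto the boundary --- so ``otherwise they would be dense in some open saturated region'' does not rule out non-compact leaves over $0$. And even granting compactness, a compact leaf is not automatically in $M^0$ (which by definition consists of compact leaves \emph{with} holonomy); your phrase ``forces the stabilizers to be non-trivial'' is asserted rather than derived. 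The paper closes this by a different, direct argument: choose a finite regular foliated atlas $\{U_k,x_k\}$ lifting to charts $(\widetilde U_k,\tilde x_k)$ with $\tilde x'_k=D|_{\widetilde U_k}$, so that $D^{-1}(0)$ meets each $\widetilde U_k$ in at most one plaque; since $D^{-1}(0)$ is $\Gamma$-invariant ($0$ being fixed by $\Hol\FF$), $\pi(D^{-1}(0))$ contains only finitely many plaques of the atlas and is therefore a finite union of compact leaves; finally, part~(\ref{i: the same model}) together with Remark~\ref{r: Hector's description of folns almost w/o hol}~(\ref{i: the leaves of mathring FF_l approach any leaf in the boundary}) shows that every compact leaf lies in $M^0$, because the interior leaves of all the models occurring are non-compact.

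A secondary problem is that your exclusion of model~(\ref{i: model 1}) with $\partial M_l=\emptyset$ appeals to $\pi(D^{-1}(0))\subset M^0$, i.e.\ to the very inclusion that is in doubt. The paper instead observes that a decomposition into models~(\ref{i: model 0}), or a model~(\ref{i: model 1}) with empty boundary, would make $\FF$ a fiber bundle over $S^1$ and hence an $\R$-Lie foliation, contradicting the standing assumption of this subsection that $\FF$ is not an $\R$-Lie foliation (with $\Hol\FF$ a non-trivial group of homotheties). This also reverses the logical order: the paper establishes~(\ref{i: the same model}) first and then uses it both for ``$\subset$'' in~(\ref{i: M^0 = pi(D^{-1}(0))}) and for the final step of ``$\supset$''. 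You should restructure accordingly and replace the density heuristic with the plaque-counting argument.
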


\begin{proof}
By Proposition~\ref{p: quasi-analytic => the same models}, all foliations $\FF_l$ have the same model, which is neither~(\ref{i: model 0}), nor~(\ref{i: model 1}) with $\partial M_l=\emptyset$, otherwise $\FF$ would be an $\R$-Lie foliation. Thus~(\ref{i: the same model}) holds. It also follows that the holonomy groups of the leaves in $M^0$ cannot be trivial, obtaining ``$\subset$'' in~(\ref{i: M^0 = pi(D^{-1}(0))}) because $\Hol_0\FF$ is the only non-trivial isotropy group. Hence~(\ref{i: non-trivial subgroups of Gamma_0}) is true by~\eqref{Hol L < Hol_x FF}. 

There is a regular foliated atlas $\{U_k,x_k\}$ of $\FF$ such that, for every $k$, there is foliated chart $(\widetilde U_k,\tilde x_k)$ of $\widetilde\FF$ so that $\pi:\widetilde U_k\to U_k$ is a diffeomorphism, $\tilde x_k=x_k\pi$ and $\tilde x'_k=D|_{\widetilde U_k}$. Hence $D^{-1}(0)$ contains just one plaque of every $(\widetilde U_k,\tilde x_k)$. Since $\{U_k,x_k\}$ is finite, and $D^{-1}(0)$ is $\Gamma$-invariant because $0$ is fixed by $\Hol\FF$, it follows that $\pi(D^{-1}(0))$ contains a finite number of plaques of the  foliated atlas $\{U_k,x_k\}$. So $\pi(D^{-1}(0))$ is a finite union of compact leaves because $\{U_k,x_k\}$ is regular. This shows ``$\supset$'' in~(\ref{i: M^0 = pi(D^{-1}(0))}) by~(\ref{i: the same model}) and Remark~\ref{r: Hector's description of folns almost w/o hol}~(\ref{i: the leaves of mathring FF_l approach any leaf in the boundary}).
\end{proof}

Note that $x\partial_x\in\fX(\R)$ is invariant by homotheties. Let $\Diffeo(\R,0)\subset\Diffeo(\R)$ denote the subgroup of diffeomorphisms that fix $0$.

\begin{lem}\label{l: Z = varkappa x partial_x}
\begin{enumerate}[(i)]

\item\label{i: Z = varkappa x partial_x} If $Z\in\fX(\R)$ is invariant by some homothety $h\ne\id_\R$, then $Z=\varkappa x\partial_x$ for some $\varkappa\in\R$.

\item\label{i: h is a homothety} If $h\in\Diffeo(\R,0)$ preserves $x\partial_x$, then $h$ is a homothety.

\end{enumerate}
\end{lem}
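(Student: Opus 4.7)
My plan is to translate each invariance condition into an explicit ODE/functional equation for the one coefficient function involved, and then exploit smoothness at the origin to force a linear answer.

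For part~(\ref{i: Z = varkappa x partial_x}), write $Z=f\,\partial_x$ for some smooth $f$ and take $h(x)=\lambda x$ with $\lambda>0$, $\lambda\ne 1$. Since $h_*\partial_x|_x=\lambda\,\partial_x|_{\lambda x}$, the equality $h_*Z=Z$ translates pointwise into the functional equation
\[
f(\lambda x)=\lambda f(x)\qquad(x\in\R).
\]
Evaluating at $x=0$ and using $\lambda\ne 1$ gives $f(0)=0$. Smoothness then lets us write $f(x)=x\,g(x)$ for a smooth $g$ with $g(0)=f'(0)$, and the functional equation rewrites as $g(\lambda x)=g(x)$. Iterating, $g(x)=g(\lambda^{n}x)$ for all $n\in\Z$. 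Choosing $n\to-\infty$ if $\lambda>1$ or $n\to+\infty$ if $\lambda<1$ sends $\lambda^n x\to 0$, and continuity of $g$ forces $g(x)=g(0)$ for every $x$. Hence $f(x)=\varkappa x$ with $\varkappa=f'(0)$, as required.

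For part~(\ref{i: h is a homothety}), write out $h_*(x\,\partial_x)|_{h(y)}=y\,h'(y)\,\partial_x|_{h(y)}$ and compare it with $(x\,\partial_x)|_{h(y)}=h(y)\,\partial_x|_{h(y)}$. The assumption $h_*(x\,\partial_x)=x\,\partial_x$ therefore reduces to the identity
\[
y\,h'(y)=h(y)\qquad(y\in\R).
\]
Because $h(0)=0$ and $h$ is smooth, factor $h(y)=y\,k(y)$ with $k$ smooth and $k(0)=h'(0)$. Substituting into the identity yields $y\bigl(k(y)+y\,k'(y)\bigr)=y\,k(y)$, so $y^{2}k'(y)\equiv 0$. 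Hence $k'\equiv 0$ off the origin, and by smoothness $k$ is a constant $C$, giving $h(y)=Cy$. Since $h$ is a diffeomorphism, $C\ne 0$, and $h$ is a homothety.

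Neither step presents a real obstacle; the only subtlety is making sure smoothness at the fixed point is used (in~(i) to promote $f(0)=0$ to a smooth factorization, and in~(ii) to pass from a pointwise identity on $\R^\times$ to a global conclusion). Both arguments also handle orientation-reversing homotheties $\lambda<0$ without change, should that extension ever be needed.
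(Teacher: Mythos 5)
Your proof is correct and follows essentially the same route as the paper's: for (i), both arguments show the coefficient vanishes at $0$, factor out $x$, observe the remaining factor is $h$-invariant, and let the orbit $\lambda^n x\to 0$ force it to be constant; for (ii), your ODE $y\,h'(y)=h(y)$ is just the infinitesimal form of the paper's flow-commutation identity $h(e^tx)=e^th(x)$, and both conclude via smoothness at $0$. The only slip is the closing remark that part (i) extends ``without change'' to ratios $\lambda<0$: for $\lambda=-1$ the orbit $\lambda^nx$ does not tend to $0$, and indeed $f\,\partial_x$ is invariant under $x\mapsto-x$ for any odd $f$, so that extension fails; since homotheties in this paper have positive ratio, this does not affect the lemma as stated.
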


\begin{proof}
Let us prove~(\ref{i: Z = varkappa x partial_x}). We can assume $h(x)=\lambda x$ ($x\in\R$) for some $\lambda>1$; otherwise consider $h^{-1}$. Any $h$-invariant $Z\in\fX(\R)$ vanishes at $0$ because this is the only fixed point of $h$. Thus $Z=xf \partial_x$ for some $f\in C^\infty(\R)$. From the $h$-invariance of both $Z$ and $x\partial_x$, and since $x\partial_x$ only vanishes at $x=0$, we get that $f$ is $h$-invariant. So $f(0)=\lim_{m\to\infty}f(x/\lambda^m)=f(x)$ for all $x\in\R$; i.e., $f$ is constant.

Let us prove~(\ref{i: h is a homothety}). Since $h$ preserves $x\partial_x$, it commutes with the flow of $x\partial_x$; i.e., $h(e^tx)=e^th(x)$ for all $x,t\in\R$. Therefore $x\mapsto h(x)/x$ is constant on $\R^\pm$. Since $h$ is smooth at zero, it follows that $h$ is a homothety.
\end{proof}

\begin{rem}\label{r: Z = varkappa x partial_x}
The same arguments can be used to show versions of Lemma~\ref{l: Z = varkappa x partial_x} on intervals $J$ of the form $(-\epsilon,\epsilon)$, $[0,\epsilon)$ or $(-\epsilon,0]$ ($0<\epsilon\le\infty$):
\begin{enumerate}[(i)]

\item\label{i: Z = varkappa x partial_x, on J} If $Z\in\fX(J)$ is invariant by the restriction to $J$ of the pseudogroup generated by some homothety $\ne\id_\R$, then $Z=\varkappa x\partial_x$ for some $\varkappa\in\R$.

\item\label{i: h is a homothety, on J} If a smooth pointed embedding $h:(J,0)\to(\R,0)$ preserves $x\partial_x$, then $h$ is the restriction of a homothety.

\end{enumerate}
\end{rem}

By Lemma~\ref{l: Z = varkappa x partial_x}~(\ref{i: Z = varkappa x partial_x}), $\fX(\R,\Hol\FF)=\R\,x\partial_x$. Let $\overline Z\in\olfX(M,\FF)$ be defined by $x\partial_x\in\fX(\R,\Hol\FF)$ according to~\eqref{olfX(M, FF) supset fX(im D, Hol FF)}. By Lemma~\ref{l: M^0 = pi(D^{-1}(0)), ...}~(\ref{i: M^0 = pi(D^{-1}(0))}), the zero set of $\overline Z$ is $M^0$. Thus $\FF^1_l\equiv\mathring\FF_l$ becomes a complete $\R$-Lie foliation with the restriction of $\overline Z$ to every $M^1_l\equiv\mathring M_l$, without having to change the differentiable structure (cf.\ Remark~\ref{r: Hector's description of folns almost w/o hol}~(\ref{i: models (2) are Lie foliations})).

\begin{lem}\label{l: overline Z is determined by overline Z|_V}
For any neighborhood $V$ in $M$ of a leaf $L\subset M^0$, every $\overline Z\in\overline\fX(M,\FF)$ is determined by $\overline Z|_V$.
\end{lem}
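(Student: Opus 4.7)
The plan is to show, by linearity, that $\overline Z|_V = 0$ forces $\overline Z = 0$ on $M$, and to propagate this vanishing across the decomposition $M = \bigcup_l \overline{\mathring M_l}$ using two ingredients: local rigidity at the compact leaves in $M^0$, and the accumulation of leaves of $\mathring\FF_l$ on the whole of $\partial M_l$. The local rigidity goes as follows. Fix any $L' \subset M^0$. By Lemma~\ref{l: M^0 = pi(D^{-1}(0)), ...}~(\ref{i: non-trivial subgroups of Gamma_0}), $\Hol L'$ is a non-trivial subgroup of $\Hol_0\FF$, so it contains a non-identity homothety. Remark~\ref{r: Z = varkappa x partial_x}~(\ref{i: Z = varkappa x partial_x, on J}) then pins down the transverse vector field induced by any $\overline Z \in \olfX(M,\FF)$ on a small transversal to $L'$ to the form $\varkappa_{L'} \, x\partial_x$ for a unique $\varkappa_{L'} \in \R$. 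In particular, if $\overline Z$ vanishes on any one-sided neighborhood of $L'$, then $\varkappa_{L'} = 0$ and $\overline Z$ vanishes on a full two-sided neighborhood of $L'$ in $M$.

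Next I would propagate $\overline Z|_V = 0$ throughout every component $\mathring M_l$ with $L \subset \bfpi(\partial M_l)$. Since $\olfX(M,\FF)$ is realized as leafwise flat sections of $N\FF$, $\overline Z$ vanishes on every leaf that meets $V$. By Lemma~\ref{l: M^0 = pi(D^{-1}(0)), ...}~(\ref{i: the same model}), each $\FF_l$ is either a model~(\ref{i: model 1}) with non-empty boundary or a model~(\ref{i: model 2}); in both cases Remark~\ref{r: Hector's description of folns almost w/o hol}~(\ref{i: the leaves of mathring FF_l approach any leaf in the boundary}) implies that every leaf of $\mathring\FF_l$ has all of $\partial M_l$ in its closure. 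Thus every leaf of $\mathring\FF_l$ accumulates on the boundary component of $M_l$ lying over $L$ and must meet $V$, forcing $\overline Z = 0$ on $\mathring M_l$ and, by continuity, on $\overline{\mathring M_l}$. The iteration is then immediate: for each $L'' \subset \bfpi(\partial M_l)$ the vanishing on $\overline{\mathring M_l}$ is one-sided at $L''$, so the rigidity step upgrades it to a full neighborhood of $L''$ in $M$, and the propagation argument with $L''$ replacing $L$ yields $\overline Z = 0$ on every $\mathring M_{l'}$ sharing a boundary leaf with $\mathring M_l$. Since $M$ is connected and $M^0$ is finite, the associated ``dual graph'' whose vertices are the $\mathring M_l$ and whose edges are shared boundary leaves in $M^0$ is connected and finite, so finitely many iterations exhaust $M$.

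The main obstacle I expect is the passage from one-sided to two-sided vanishing of $\overline Z$ across a compact leaf $L'' \subset M^0$: this is not a soft consequence of smoothness, since a smooth section of $N\FF$ may perfectly well vanish on one side of $L''$ without vanishing on the other. What rescues matters is the transversely affine hypothesis together with the non-trivial homothety holonomy at $L''$, which through Lemma~\ref{l: M^0 = pi(D^{-1}(0)), ...}~(\ref{i: non-trivial subgroups of Gamma_0}) and Remark~\ref{r: Z = varkappa x partial_x}~(\ref{i: Z = varkappa x partial_x, on J}) collapses the space of transverse vector fields near $L''$ to the one-parameter family $\R \cdot x\partial_x$; this one parameter is then forced to zero by the one-sided vanishing. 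This is precisely where the standing hypothesis that $\Hol\FF$ is a non-trivial group of homotheties enters.
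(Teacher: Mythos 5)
Your proposal is correct and follows essentially the same route as the paper's proof: leafwise flatness of $\overline Z$ plus the fact that every leaf of $\mathring\FF_l$ accumulates on all of $\partial M_l$ propagates the data from $V$ across each adjacent component, the rigidity $\fX(J,\langle\text{homothety}\rangle)=\R\,x\partial_x$ (via Lemma~\ref{l: M^0 = pi(D^{-1}(0)), ...} and Remark~\ref{r: Z = varkappa x partial_x}) upgrades one-sided to two-sided determination at each compact leaf, and connectedness of $M$ closes the induction. The only cosmetic difference is that you reduce ``determined by'' to the vanishing of the kernel of the (linear) restriction map, which is a harmless reformulation.
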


\begin{proof}
With the notation of Remark~\ref{r: Hector's description of folns almost w/o hol}~(\ref{i: Hol_pm L}) for this particular $L$, any leaf of $\FF^1_{l_\pm}$ meets $V$ by Remark~\ref{r: Hector's description of folns almost w/o hol}~(\ref{i: the leaves of mathring FF_l approach any leaf in the boundary}). So the restriction $\overline Z$ to $\overline{M^1_{l_+}\cup M^1_{l_-}}$ is determined by $\overline Z|_V$. By Lemma~\ref{l: Z = varkappa x partial_x}~(\ref{i: Z = varkappa x partial_x}) and Remark~\ref{r: Z = varkappa x partial_x}~(\ref{i: Z = varkappa x partial_x, on J}), and using the Reeb's local stability, it follows that the restriction $\overline Z$ to some neighborhood of $\overline{M^1_{l_+}\cup M^1_{l_-}}$ is also determined by $\overline Z|_V$. Then we can apply the same argument to all closures $\overline{M^1_l}$ that meet $\overline{M^1_{l_+}\cup M^1_{l_-}}$. Continuing in this way, the result follows because $M$ is connected.
\end{proof}

\begin{prop}\label{p: olfX(M, FF) equiv fX(im D, Gamma), transversely affine}
$\olfX(M,\FF)\equiv\fX(\R,\Hol\FF)$ via~\eqref{olfX(M, FF) supset fX(im D, Hol FF)}.
\end{prop}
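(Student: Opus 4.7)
The plan is to combine a local computation around a compact leaf $L \subset M^0$ with the rigidity of vector fields invariant under non-trivial homotheties, and to propagate the resulting agreement globally via Lemma~\ref{l: overline Z is determined by overline Z|_V}. Since $\im D = \R$ and $0 \in \R$, Lemma~\ref{l: M^0 = pi(D^{-1}(0)), ...}(\ref{i: M^0 = pi(D^{-1}(0))}) guarantees that $M^0 \ne \emptyset$, so such an $L$ is available.

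First, I would fix $L \subset M^0$, a point $p \in L$, a lift $\widetilde p \in D^{-1}(0) \cap \pi^{-1}(p)$, and a foliated chart $(\widetilde U, (D|_{\widetilde U}, \widetilde y))$ of $\widetilde \FF$ around $\widetilde p$ such that $\pi|_{\widetilde U}$ is a diffeomorphism onto a foliated chart $(U, (x,y))$ of $\FF$ around $p$ whose transverse coordinate $x$ takes values in some interval $J \ni 0$. Through this identification and the $h$-equivariance of $D$, the germ at $0$ of the $\Hol L$-action on the local transversal $\tau \equiv J$ becomes the restriction to $J$ of the isotropy subgroup $\Hol_0 \FF \subset \Hol\FF$, which consists of homotheties fixing $0$; by Lemma~\ref{l: M^0 = pi(D^{-1}(0)), ...}(\ref{i: non-trivial subgroups of Gamma_0}) this restriction is non-trivial.

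Next, given $\overline Z \in \olfX(M,\FF)$, I would write $\overline Z|_\tau = f(x)\partial_x$ for some $f \in C^\infty(J)$. Since $f(x)\partial_x$ must be invariant under the $\Hol L$-action just described, Remark~\ref{r: Z = varkappa x partial_x}(\ref{i: Z = varkappa x partial_x, on J}) forces $f(x) = \varkappa x$ for some $\varkappa \in \R$. Letting $\overline{Z'} \in \olfX(M,\FF)$ be the image of $\varkappa x \partial_x \in \fX(\R,\Hol\FF)$ under~\eqref{olfX(M, FF) supset fX(im D, Hol FF)}, leafwise flatness of transverse vector fields applied on $U$, together with a covering of $L$ by finitely many charts of the above type (with the single constant $\varkappa$ propagated across the charts by the invariance of $x\partial_x$ under homotheties), yields $\overline Z = \overline{Z'}$ on a neighborhood $V$ of $L$. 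Lemma~\ref{l: overline Z is determined by overline Z|_V} then upgrades this to $\overline Z = \overline{Z'}$ on all of $M$, which is precisely the inclusion complementing~\eqref{olfX(M, FF) supset fX(im D, Hol FF)}.

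The step I expect to be the main obstacle is the equivariance bookkeeping in the second paragraph: the identification in~\eqref{Hol L < Hol_x FF} is a priori an abstract group isomorphism, and turning it into a genuine coincidence of pseudogroup germs at $0 \in J$ requires unpacking the Fedida-type data $\pi$, $D$, $h$ precisely at the chosen $\widetilde p$, using the normalization $\widetilde x = D|_{\widetilde U}$. Once this is in hand, the rigidity step via Remark~\ref{r: Z = varkappa x partial_x}(\ref{i: Z = varkappa x partial_x, on J}) and the propagation via Lemma~\ref{l: overline Z is determined by overline Z|_V} are essentially formal.
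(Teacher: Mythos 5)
Your proposal is correct and follows essentially the same route as the paper: restrict a given transverse vector field near a compact leaf $L\subset M^0$, use the non-triviality of $\Hol L\subset\Hol_0\FF$ together with Lemma~\ref{l: Z = varkappa x partial_x}~(\ref{i: Z = varkappa x partial_x}) and Remark~\ref{r: Z = varkappa x partial_x}~(\ref{i: Z = varkappa x partial_x, on J}) to identify it with $\varkappa x\partial_x$ there, and then propagate globally by Lemma~\ref{l: overline Z is determined by overline Z|_V}. The only (immaterial) difference is that the paper phrases the local step via the local diffeomorphism $\overline D:V\to W$ on the leaf space $\widetilde M/\widetilde\FF$, whereas you unwind it in foliated charts and local transversals.
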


\begin{proof}
We have to prove that the injection of~\eqref{olfX(M, FF) supset fX(im D, Hol FF)} is surjective in this case. Let $Z\in\fX(\widetilde M/\widetilde\FF,\Gamma)$. Take leaves, $L\subset M^0$ of $\FF$ and $\widetilde L$ of $\widetilde\FF$ with $\pi(\widetilde L)=L$. There are open neighborhoods, $V$ of $\widetilde L$ in $\widetilde M/\widetilde\FF$ and $W$ of $0$ in $\R$, so that $\overline D:V\to W$ is a diffeomorphism. Consider $\{e\}\ne\Hol L\subset\Hol_0\FF$ according to~\eqref{Hol L < Hol_x FF}. By Lemma~\ref{l: Z = varkappa x partial_x}~(\ref{i: Z = varkappa x partial_x}) and Remark~\ref{r: Z = varkappa x partial_x}~(\ref{i: Z = varkappa x partial_x, on J}), $D_*(Z|_V)=\varkappa x\partial_x|_V$ for some $\varkappa\in\R$ if $V$ and $W$ are small enough. So, by Lemma~\ref{l: overline Z is determined by overline Z|_V}, $Z$ corresponds to $\varkappa x\partial_x\in\fX(\R,\Hol\FF)$ via~\eqref{olfX(M, FF) supset fX(im D, Hol FF)}.
\end{proof}

The transverse orientation of every $\FF_l$ is directed, either outward on all boundary leaves of $M_l$, or inward on all of them \cite[Lemma~3.4]{Inaba1989}. Thus no pair of boundary components of the same $M_l$ is glued to get $M$. So, not only $\mathring M_l\equiv M^1_l$, but also $M_l\equiv\overline{M^1_l}$ via $\bfpi$. In particular, there have to be at least two manifolds $M_l$, and $M^0$ contains at least two leaves.

\begin{ex}\label{ex: trasv aff foln on S^n-1 times S^1}
Let $\widetilde\FF$ denote the foliation on $\widetilde M:=\R^n\sm\{0\}$ ($n>1$) whose leaves are the connected components of the last coordinate projection $D:\widetilde M\to\R$. Multiplication by any $\lambda>1$ defines an action of $\Z$ on $\widetilde M$, giving rise to a covering $\pi_\lambda:\widetilde M\to M_\lambda$, where $M_\lambda$ is diffeomorphic to $S^{n-1}\times S^1$. Since $\widetilde\FF$ is $\Z$-invariant, it induces an elementary transversely affine foliation $\FF_\lambda$ on $M_\lambda$, being $\pi_\lambda$ and $D$ the maps of its description of Section~\ref{ss: homogeneous folns}. $M_\lambda^0=\pi_\lambda(D^{-1}(0))$ is diffeomorphic to $S^{n-2}\times S^1$. Thus there are two compact leaves if $n=2$, and one compact leaf if $n>2$. $M_\lambda^1$ has two components, $M^1_{\lambda,\pm}=\pi_\lambda(\R^\pm)$. The corresponding foliated manifolds with boundary, $(M_{\lambda,\pm},\FF_{\lambda,\pm})$, are transversely affine Reeb components on $D^{n-1}\times S^1$ \cite[Section~1.4.4]{HectorHirsch1981-A}, using the obvious extension of this property to foliations on manifolds with boundary. A different description of these transversely affine Reeb components is given in \cite[Example~1.1.12]{CandelConlon2000-I}. 
\end{ex}

\begin{ex}\label{ex: connected sum of transv affine folns}
Consider the standard affine structure on $\R$, and its restriction to $\R^+$. The affine circles are \cite{Kuiper1953a}, \cite[Appendix to Section~2]{Goldman1980}: 
\begin{enumerate}[(i)]

\item\label{i: standard affine struct on S^1} the quotient of $\R$ by the additive action of $\Z$; and, 

\item\label{i: lambda affine struct on S^1} for every $\lambda>1$, the quotient of $\R^+$ by the multiplicative action of $\lambda\Z$. 

\end{enumerate}
After fixing an orientation, affine structures on $S^1$ are the transversely affine structures $(\omega,\theta)$ of the foliation by points. Then the affine structure defined by $(\omega,\theta)$ is isomorphic to~(\ref{i: standard affine struct on S^1}) if $\int_{S^1}\theta=0$, and isomorphic to~(\ref{i: lambda affine struct on S^1}) for some $\lambda>1$ if $|\int_{S^1}\theta|=\ln\lambda$. Thus $|\int_{S^1}\theta|$ classifies these structures on $S^1$; indeed, $\int_{S^1}\theta$ classifies these structures up to orientation preserving isomorphisms \cite[Section~III.3.3]{Godbillon1991}, \cite[Section~4.1]{Seke1980}.

Now let $\FF$ be a transversely affine foliation on a closed manifold $M$ defined by $(\omega,\theta)$. Any smooth closed transversal $c:S^1\to M$ of $\FF$ induces the orientation and affine structure on $S^1$ given by $(c^*\omega,c^*\theta)$.

In Example~\ref{ex: connected sum of folns}, suppose $\FF$ and $\GG$ are transversely affine, defined by $(\omega,\theta)$ and $(\alpha,\beta)$, respectively. If they induce the same orientation and affine structure on $S^1$ via $c$ and $d$ ($c^*\omega=f\,d^*\alpha$ for some $0<f\in C^\infty(S^1)$ and $\int_{S^1}c^*\theta=\int_{S^1}c^*\beta$), then $\FF'$ clearly becomes transversely affine.

In Example~\ref{ex: trasv aff foln on S^n-1 times S^1}, let $c_{\lambda,\pm}:S^1\to M_\lambda$ be a smooth closed transversal of $\FF_\lambda$ that cuts every leaf of $\FF^1_{\lambda,\pm}$ once, and induces the standard orientation of $S^1$. Via $c_{\lambda,\pm}$, we get the affine structure~(\ref{i: lambda affine struct on S^1}) on $S^1$ defined with $\lambda$. 

In Example~\ref{ex: turbulization}, if $\FF$ is also transversely affine, inducing the standard orientation on $S^1$ via $c$, then there is a transversely affine turbulization along $c$ if and only if $\ln\lambda:=\int_{S^1}c^*\theta\ne0$ (taking the connected sum with $\FF_\lambda$ along $c$ and $c_{\lambda,\pm}$) \cite[Section~2]{Seke1980}.
\end{ex}

\subsection{Transversely projective foliations}\label{ss: transv proj folns}

Recall that $\SL(2,\R)$ is the Lie group of $2\times2$ matrices of determinant one, and $\PSL(2,\R)=\SL(2,\R)/\{\pm I\}$, where $I$ denotes the identity matrix. $\PSL(2,\R)$ acts on the projective line $S^1_\infty=\R\cup\{\infty\}$ by projective transformations, the action of $\left(\begin{smallmatrix}a&b\\c&d\end{smallmatrix}\right)$ being $x\mapsto(ax+b)/(cx+d)$. The stabilizer of $\infty$ consists of the upper triangular matrices ($c=0$), whose restriction to $\R$ gives $\Aff^+(\R)$. An element $A\in\PSL(2,\R)$ is called \emph{hyperbolic}, \emph{parabolic} or \emph{elliptic} if it has $2$, $1$ or $0$ fixed points in $S^1_\infty$, respectively. Elliptic elements are conjugate to rotations (elements of $\PSO(2)=\SO(2)/\{\pm I\}$) different from the identity. The hyperbolic and parabolic elements are conjugate to transformations of the form $x\mapsto\lambda x$ ($\lambda>0$) and $x\mapsto x+\lambda$ ($\lambda\ne0$), respectively.

It is said that $\FF$ is \emph{transversely projective} if it is a transversely homogeneus $(\PSL(2,\R),S^1_\infty)$-foliation. This means that, according to Section~\ref{ss: prelim,  folns of codim 1}, $\omega$ and $\theta$ can be chosen so that $d\theta=\eta\wedge\omega$ and $d\eta=\eta\wedge\theta$ for some $\eta\in C^\infty(M;\Lambda^1)$ \cite{Blumenthal1979}. In this case, the corresponding description of Section~\ref{ss: homogeneous folns} is given by $\pi:\widetilde M\to M$, $D:\widetilde M\to S^1_\infty$, $h:\Gamma\to\PSL(2,\R)$ and $\Hol\FF\subset\PSL(2,\R)$. 

Assume that $\FF$ is transversely projective and almost without holonomy. Then Inaba and Matsumoto proved that either of the following holds \cite[Proposition~2.1, the proof of Proposition~3.4 and its remark]{InabaMatsumoto1990}:
\begin{enumerate}[(a)]

\item\label{i: abelian subgroup of PSO(2)} $\Hol\FF$ is conjugate to an abelian subgroup of $\PSO(2)$. 

\item\label{i: hyperbolic elements with a common fixed point set} $\Hol\FF$ consists of the identity, hyperbolic elements with a common fixed point set and possible elliptic elements which keep the fixed point set invariant.

\item\label{i: subgroup of the stabilizer at infty} $\Hol\FF$ is conjugate to a subgroup of the stabilizer of $\infty$.

\end{enumerate}

In the case~(\ref{i: abelian subgroup of PSO(2)}), $\FF$ is an $\R$-Lie foliation. 

In the case~(\ref{i: subgroup of the stabilizer at infty}), we can assume that $\Hol\FF$ is a subgroup of the stabilizer of $\infty$ after conjugation. If $\infty\notin\im D$, then $\FF$ is transversely affine. If $\infty\in\im D$ and $\Hol\FF$ does not contain parabolic elements, then $\FF$ satisfies~(\ref{i: hyperbolic elements with a common fixed point set}). If $\infty\in\im D$ and $\Hol\FF$ has some parabolic element $h$, then the fixed point of $h$ is $\infty$, and $\pi(D^{-1}(\infty))$ consists of some compact leaves whose holonomy group cannot be given by germs of homotheties.

In the case~(\ref{i: hyperbolic elements with a common fixed point set}), $\Hol\FF$ is virtually abelian, and it is abelian just when there are no elliptic elements. After conjugation, we can assume that the fixed point set of the hyperbolic elements is $\{0,\infty\}$. Since $\im D$ is $\Hol\FF$-invariant and $\widetilde M$ is connected, it follows that $\im D$ is $\R^\pm$, $\R$, $S^1_\infty\sm\{0\}$ or $S^1_\infty$. If $\im D=\R^\pm$ or $\im D=\R$, then $\FF$ is transversely affine. If $\im D=S^1_\infty\sm\{0\}$, then we pass to the case $\im D=\R$ using conjugation by the rotation $x\mapsto-1/x$ of $S^1_\infty$. Thus, if $\FF$ is not transversely affine, then $\im D=S^1_\infty$. Let us analyze the last case from now on.

Now an obvious version of Lemma~\ref{l: M^0 = pi(D^{-1}(0)), ...} follows with a similar proof, where $D^{-1}(\{0,\infty\})$ is used in~(\ref{i: M^0 = pi(D^{-1}(0))}) instead of $D^{-1}(0)$, and subgroups of $\Hol_0\FF$ or $\Hol_\infty\FF$ are used in~(\ref{i: non-trivial subgroups of Gamma_0}) instead of just subgroups of $\Hol_0\FF$.

Note that $x\partial_x\in\fX(\R)$ extends to a smooth vector field on $S^1_\infty$, also denoted by $x\partial_x$, which is invariant by all hyperbolic elements with fixed point set $\{0,\infty\}$. In fact, $x\partial_x$ on $S^1_\infty\sm\{0\}$ corresponds to $-y\partial_y$ on $\R$ by the rotation $x\mapsto y=-1/x$ of $S^1_\infty$.

\begin{lem}\label{l: Z = varkappa x partial_x, transv proj}
If $Z\in\fX(S^1_\infty)$ is invariant by some hyperbolic element whose fixed point set is $\{0,\infty\}$, then $Z=\varkappa x\partial_x$ for some $\varkappa\in\R$. In particular, $\fX(S^1_\infty,\Hol\FF)=\R\,x\partial_x$ if $\Hol\FF$ has no elliptic element, otherwise $\fX(S^1_\infty,\Hol\FF)=0$.
\end{lem}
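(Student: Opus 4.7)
The plan is to reduce the global statement on $S^1_\infty$ to the already established affine statement on $\R$ from Lemma~\ref{l: Z = varkappa x partial_x}, and then analyse what extra constraint the possible elliptic elements of $\Hol\FF$ impose.

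For the first assertion, fix a hyperbolic $h\in\PSL(2,\R)$ with fixed point set $\{0,\infty\}$. Then $h|_\R$ is a homothety $x\mapsto\lambda x$ with $\lambda\ne1$, so restricting a given $h$-invariant $Z\in\fX(S^1_\infty)$ to the open dense subset $\R\subset S^1_\infty$ gives an $h|_\R$-invariant element of $\fX(\R)$. Lemma~\ref{l: Z = varkappa x partial_x}~(\ref{i: Z = varkappa x partial_x}) then yields $Z|_\R=\varkappa\,x\partial_x$ for some $\varkappa\in\R$. Since $x\partial_x$ extends smoothly to $S^1_\infty$ (as remarked just before the statement) and $Z$ is smooth on all of $S^1_\infty$, continuity forces $Z=\varkappa\,x\partial_x$ globally.

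For the second assertion, assume $\Hol\FF$ is as in case~(\ref{i: hyperbolic elements with a common fixed point set}) with common fixed point set normalised to $\{0,\infty\}$. Every hyperbolic element of $\Hol\FF$ is then a nontrivial homothety and so preserves $x\partial_x$, giving the inclusion $\R\,x\partial_x\subseteq\fX(S^1_\infty,\Hol\FF)$. Combined with the first assertion, this already yields the desired equality $\fX(S^1_\infty,\Hol\FF)=\R\,x\partial_x$ when $\Hol\FF$ has no elliptic element.

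The remaining case, which I expect to be the only slightly subtle point, is that of an elliptic $e\in\Hol\FF$. Such an $e$ must preserve $\{0,\infty\}$ setwise yet have no fixed points in $S^1_\infty$, so it swaps $0$ and $\infty$. Writing a representative matrix of $e$ in $\SL(2,\R)$, the swapping condition forces zero diagonal entries, so $e$ takes the form $e(x)=-1/(c^2x)$ for some $c\ne0$; in particular $e^2=\id$. A direct push-forward computation (comparing $x\,e'(x)$ with the coefficient at $e(x)$) then gives $e_*(x\partial_x)=-x\partial_x$, so the only scalar multiple of $x\partial_x$ invariant under $e$ is $0$. Together with the first assertion this forces $\fX(S^1_\infty,\Hol\FF)=0$, completing the proof. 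The only computational ingredient is the normal form for elements of $\PSL(2,\R)$ swapping $\{0,\infty\}$; once this is in hand, everything reduces to Lemma~\ref{l: Z = varkappa x partial_x}.
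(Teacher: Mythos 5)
Your proof is correct and follows essentially the same route as the paper: restrict to $\R$, apply Lemma~\ref{l: Z = varkappa x partial_x}~(\ref{i: Z = varkappa x partial_x}) to the homothety $h|_\R$, extend by density/continuity, and then kill the invariant field using $A_*(x\partial_x)=-x\partial_x$ for any elliptic $A$ preserving $\{0,\infty\}$. The only (immaterial) difference is that the paper obtains this last identity by conjugating $A$ to the rotation $x\mapsto-1/x$ via a hyperbolic element, whereas you compute it directly from the normal form $x\mapsto-1/(c^2x)$.
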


\begin{proof}
By Lemma~\ref{l: Z = varkappa x partial_x}~(\ref{i: Z = varkappa x partial_x}), $Z|_\R=\varkappa x\partial_x$ for some $\varkappa\in\R$ because the restriction to $\R$ of any hyperbolic element with fixed point set $\{0,\infty\}$ is a homothety different from the identity. So $Z=\varkappa x\partial_x$ on $S^1_\infty$.

The last assertion is true because any elliptic element $A$ preserving $\{0,\infty\}$ is conjugated to the rotation $x\mapsto-1/x$ by some hyperbolic element with fixed point set $\{0,\infty\}$, and therefore $A_*(x\partial_x)=-x\partial_x$.
\end{proof}

Like in Section~\ref{ss: transv affine folns}, every $\FF^1_l\equiv\mathring\FF_l$ becomes a complete $\R$-Lie foliation with the restriction to $M^1_l\equiv\mathring M_l$ of the element of $\olfX(M,\FF)$ defined by $x\partial_x\in\fX(\R,\Hol\FF)$ via~\eqref{olfX(M, FF) supset fX(im D, Hol FF)}. Moreover the statements of Lemma~\ref{l: overline Z is determined by overline Z|_V} and Proposition~\ref{p: olfX(M, FF) equiv fX(im D, Gamma), transversely affine} hold as well, with the obvious adaptations of the proofs.

Now the transverse orientation of every $\FF_l$ may be directed outward and inward on different boundary leaves of $M_l$. Anyway, $M^0$ contains at least two leaves because $\emptyset\ne\pi(D^{-1}(0)),\pi(D^{-1}(\infty))\subset M^0$.

\begin{ex}\label{ex: transv proj suspension foln}
The identity and the hyperbolic elements with common fixed point set $\{0,\infty\}$ form an abelian and torsion free subgroup $H\subset\PSL(2,\R)$ (its restriction to $\R$ is the group of orientation preserving homotheties). Let $\Gamma\subset H$ be a subgroup of finite rank, and let $\widetilde L$ be a $\Gamma$-covering of the closed oriented surface $L$ of genus two. Let $\widetilde M=S^1_\infty\times\widetilde L$ with the foliation $\widetilde\FF$ by the fibers of the first factor projection $D:\widetilde M\to S^1_\infty$. The diagonal action of $\Gamma$ on $\widetilde M$, given by $\gamma\cdot(x,\tilde y)=(\gamma(x),\gamma\cdot\tilde y)$, preserves $\widetilde\FF$. Thus it induces a suspension foliation $\FF$ on the closed manifold $M=\Gamma\backslash\widetilde M$ \cite[Section~3.1]{CandelConlon2000-I}. $\FF$ is a transversely projective foliation, whose developing map is $D$ and with $\Hol\FF=\Gamma$ (Section~\ref{ss: homogeneous folns}). It has two compact leaves, which are diffeomorphic to $L$, and all other leaves are diffeomorphic to $\widetilde L$.
\end{ex}

\begin{ex}\label{ex: transv proj model, n=2}
In Example~\ref{ex: Reeb components}~(\ref{i: |f(x)| = ln(1-|x|)^mu}), the model~(\ref{i: model 1}) foliation $\GG$ is transversely projective. It is transversely affine if and only if $\sign(f(x))$ has the same limit as $x\to1$ and as $x\to-1$, which is another description of the transversely affine Reeb component of Example~\ref{ex: trasv aff foln on S^n-1 times S^1} for $n=2$ and $\lambda=e^{1/\mu}$.

In Example~\ref{ex: tangential gluing of models}~(\ref{i: foln on T^2 or K}), using the above model~(\ref{i: model 1}) foliations to make tangential gluing, all of them with the same $\mu$, the result is a transversely projective foliation if it is transversely oriented, which means that the number of transversely affine models is even. It is transversely affine if and only if all models are transversely affine.
\end{ex}

\begin{ex}\label{ex: connected sum of transv proj folns}
In Example~\ref{ex: connected sum of folns}, if $\FF$ and $\GG$ are also transversely projective, and induce the same projective structure on $S^1$ via $c$ and $d$, then $\FF'$ clearly becomes transversely projective. (See \cite[Appendix to Section~2]{Goldman1980} for the classification of projective circles.)
\end{ex}

\section{Transversely simple foliated flows}\label{s: transv simple foliated flows}

Let $\FF$ be a smooth foliation of codimension one on a manifold $M$. For the sake of simplicity, assume that $\FF$ is transversely oriented. Let $Z\in\fXcom(M,\FF)$ with foliated flow $\phi$. Let $M^0$ be the union of leaves preserved by $\phi$. The $\phi$-invariant set $M^0$ is closed in $M$ because it is the zero set of $\overline{Z}\in\olfX(M,\FF)\subset C^\infty(M;N\FF)$. Moreover $\phi$ is transverse to the leaves on the open set $M^1:=M\sm M^0$. So there is a canonical isomorphism $N\phi\cong T\FF$ on $M^1$, and $\FF$ is TC at every point of $M^1$ (Section~\ref{ss: Riem folns}); in particular, the leaves in $M^1$ have no holonomy. With the notation of Sections~\ref{ss: holonomy}--\ref{ss: fol maps}, let $\bar\phi$ be the $\HH$-equivariant local flow on $\Sigma$ generated by $\overline Z\in\fX(\Sigma,\HH)$. Via the homeomorphism $M/\FF\to\Sigma/\HH$ defined by the maps $x'_k$, the leaves preserved by $\phi$ correspond to the $\HH$-orbits preserved by $\bar\phi$, whose union is $\Fix(\bar\phi)$ because they are totally disconnected. 

\begin{defn}\label{d: transversely simple}
The leaves preserved by $\phi$ that correspond to simple fixed points of $\bar\phi$ are called \emph{transversely simple}. If all leaves preserved by $\phi$ are transversely simple, then $\phi$ {\rm(}or $Z${\rm)} is called \emph{transversely simple}.
\end{defn} 

Since $\dim\Sigma=1$, for all simple $\bar p\in\Fix(\bar\phi)$, there is some $\varkappa=\varkappa_{\bar p}\in\R^\times$ such that $\bar\phi^t_*\equiv e^{\varkappa t}$ on $T_{\bar p}\Sigma\equiv\R$. By the $\HH$-equivariance of $\bar\phi$, we easily get $\varkappa_{\bar p}=\varkappa_{\bar q}$ for all $\bar q\in\HH(\bar p)\subset\Fix(\bar\phi)$. Thus we can use the notation $\varkappa_L=\varkappa_{\bar p}$ if $\HH(\bar p)$ corresponds to the simple preserved leaf $L$.

\begin{lem}\label{l: overline Z = varkappa_Lx partial_x}
Let $\psi$ be a local flow on $\R$ with infinitesimal generator $X\in\fX(\R)$. If $0$ is a simple fixed point of $\psi$ with $\varkappa_0=\varkappa$, then there is a coordinate $x$ around $0$ in $\R$ so that $x(0)=0$ and $X=\varkappa x\partial_x$, and therefore $\psi^t(x)=e^{\varkappa t}x$.
\end{lem}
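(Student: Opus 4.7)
The plan is to show that any smooth vector field on $\R$ vanishing simply at $0$ is smoothly conjugate to its linearization $\varkappa x\partial_x$ near $0$; from there the flow formula is immediate.

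Write $X=f(y)\partial_y$ for the standard coordinate $y$ on $\R$. The hypothesis $\psi^t_*=e^{\varkappa t}$ on $T_0\R$ is equivalent to $f(0)=0$ and $f'(0)=\varkappa$. Since $f$ is smooth with $f(0)=0$, Hadamard's lemma gives $f(y)=y\,g(y)$ for a smooth function $g$ with $g(0)=\varkappa\ne0$. So after shrinking the neighborhood, $f$ has no zero other than $0$ and $\sign(f(y))=\sign(\varkappa y)$.

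The goal is to find a smooth diffeomorphism $y\mapsto x(y)$ near $0$ with $x(0)=0$ and $f(y)\,x'(y)=\varkappa\,x(y)$, which is exactly the condition that $X=\varkappa x\partial_x$ in the new coordinate. The ODE $x'/x=\varkappa/f$ is singular at $y=0$, but the singularity can be isolated by writing
\[
\frac{\varkappa}{f(y)}=\frac{1}{y}+\phi(y),\qquad\phi(y):=\frac{\varkappa y-f(y)}{y\,f(y)}.
\]
Because $\varkappa y-f(y)=O(y^2)$ and $f(y)=\varkappa y+O(y^2)$, the function $\phi$ extends smoothly across $0$. Hence the formula
\[
x(y)=y\,H(y),\qquad H(y):=\exp\!\Bigl(\int_0^y\phi(s)\,ds\Bigr),
\]
defines a smooth function near $0$ with $H(0)=1$, and a short computation (using $H'/H=\phi$) verifies $f\,x'=\varkappa\,x$. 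Since $x'(0)=H(0)=1\ne0$, $x$ is a local diffeomorphism, so it qualifies as a coordinate around $0$.

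In this coordinate $X=\varkappa x\partial_x$, so the integral curves of $X$ satisfy $\dot x=\varkappa x$, whence $\psi^t(x)=e^{\varkappa t}x$. No serious obstacle arises; the only subtlety is extracting the smooth function $\phi$ from the singular integrand $\varkappa/f$, which is handled once one uses $f(y)=y\,g(y)$ with $g(0)=\varkappa$.
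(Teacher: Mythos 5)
Your proof is correct and follows essentially the same route as the paper: both make the ansatz $x(y)=y\cdot(\text{smooth nonvanishing factor})$, reduce the conjugacy equation $f\,x'=\varkappa x$ to a regular first-order ODE for that factor after splitting off the $1/y$ singularity of $\varkappa/f$ (your $\phi$ is exactly the paper's $g/h$), and solve it by an explicit exponential of an integral. The only cosmetic difference is that the paper isolates the smoothness of the integrand via the factorizations $f(u)=uh(u)$ and $\varkappa-h(u)=ug(u)$, while you phrase the same fact in terms of orders of vanishing.
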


\begin{proof}
Let $u$ denote the standard coordinate of $\R$. The condition on $0$ means that  $X=f(u)\partial_u$ for some $f\in C^\infty(\R)$ with $f(0)=0$ and $f'(0)=\varkappa$. Then $f(u)=uh(u)$ for some $h\in C^\infty(\R)$ with $h(0)=\varkappa$. Hence there is some $g\in C^\infty(\R)$ such that $\varkappa-h(u)=ug(u)$. We look for some smooth function $x=x(u)$ around $0$ so that $x(0)=0$, $x'(0)\ne0$ and $\varkappa x\partial_x=X$. Thus $x(u)=ue(u)$ for some smooth function $e=e(u)$ defined around $0$ with $e(0)\ne0$. Since $\partial_u=x'(u)\partial_x$, we need $\varkappa ue(u)=uh(u)(e(u)+ue'(u))$ around $0$; i.e., $e'(u)/e(u)=(\varkappa-h(u))/uh(u)=g(u)/h(u)$. Any $e(u)=C\exp(\int_0^ug(v)/h(v)\,dv)$ with $C\ne0$ will do the job.
\end{proof}

\begin{rem}\label{r: transversely simple}
\begin{enumerate}[(i)]


\item\label{i: depends on overline Z} Since $\bar\phi$ and $\overline Z\in\fX(\Sigma,\HH)\equiv\overline\fX(M,\FF)$ determine each other, the condition on the preserved leaves of $\phi$ to be transversely simple depends only on $\overline Z\in\overline\fX(M,\FF)$.

\item\label{i: overline Z = varkappa_Lx partial_x}  By Lemma~\ref{l: overline Z = varkappa_Lx partial_x}, around any point $p$ in a transversely simple leaf $L\subset M^0$, there are foliated coordinates $(x,y)$ with $x(p)=0$ and $\overline Z=\varkappa_Lx\partial_x$.

\item If $\phi$ is transversely simple, then every closed orbit is contained in either $M^0$ or $M^1$, and all fixed points belong to $M^0$.

\end{enumerate}
\end{rem}

From now on, suppose that $\phi$ is transversely simple and $M$ is compact, unless otherwise stated. 

\begin{prop}\label{p: M^0 is a finite union of compact leaves}
 $M^0$ is a finite union of compact leaves.
\end{prop}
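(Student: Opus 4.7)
The plan is to exploit the local normal form $\overline{Z}=\varkappa_L x\partial_x$ that the transverse simplicity hypothesis provides at each preserved leaf (Remark~\ref{r: transversely simple}~(\ref{i: overline Z = varkappa_Lx partial_x})), and deduce both that each preserved leaf is compact and that preserved leaves are isolated components of $M^0$.

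First, let $L$ be any preserved leaf and $p\in L$. By Remark~\ref{r: transversely simple}~(\ref{i: overline Z = varkappa_Lx partial_x}), there exists a foliated chart $(U,(x,y))$ around $p$ with $x(p)=0$ and $\overline{Z}|_U=\varkappa_L x\partial_x$. In this chart the zero set of $\overline{Z}$ coincides with $\{x=0\}$, which is the single plaque of $L$ through $p$. Since $M^0$ is exactly the zero set of $\overline{Z}\in C^\infty(M;N\FF)$, we obtain the key local identity
\[
U\cap M^0 = U\cap L.
\]

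Next, I would prove that each preserved leaf $L$ is closed in $M$. Suppose $q\in\overline{L}$. Since $M^0$ is closed and $L\subset M^0$, we have $q\in M^0$, so $q$ lies in some preserved leaf $L'$. Apply the previous step at $q$ for the leaf $L'$ to obtain a foliated chart $U'$ with $U'\cap M^0=U'\cap L'$. Points of $L$ accumulate at $q$ and all lie in $M^0$, so near $q$ they must belong to $U'\cap L'$. Because the foliation's plaques in $U'$ uniquely determine leaves, and distinct leaves are disjoint, this forces $L=L'$, whence $q\in L$. Thus $L$ is closed in $M$, and therefore compact since $M$ is compact.

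Now I would show that each preserved leaf is also open in $M^0$. Using the compactness of $L$, cover it by finitely many foliated charts $(U_i,(x_i,y_i))$ of the normal-form type above, each satisfying $U_i\cap M^0=U_i\cap L$. Setting $U_L:=\bigcup_i U_i$, this is an open neighborhood of $L$ in $M$ with
\[
U_L\cap M^0 = U_L\cap L.
\]
Hence $L$ is open in $M^0$. Being simultaneously connected (it is a leaf), open and closed in $M^0$, $L$ is a connected component of $M^0$.

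Finally, $M^0$ is a closed subset of the compact manifold $M$, hence compact. Its connected components are the preserved leaves, each of which is open in $M^0$ by the previous paragraph. A compact space cannot be partitioned into infinitely many pairwise disjoint nonempty open subsets, so $M^0$ has only finitely many connected components. This proves that $M^0$ is a finite union of compact (preserved) leaves. The only delicate step is showing each preserved leaf is closed; everything else is a straightforward consequence of the local normal form combined with compactness.
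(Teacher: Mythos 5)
Your proof is correct and takes essentially the same route as the paper's: the paper likewise observes that each preserved leaf $L$ has a neighborhood $V$ with $V\cap M^0=L$ (citing isolation of simple fixed points of $\bar\phi$ in $\Sigma$, which is exactly your normal form $\overline Z=\varkappa_L x\partial_x$) and then concludes by compactness of $M$ and closedness of $M^0$. Your write-up merely spells out the closedness and open-cover details that the paper leaves implicit.
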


\begin{proof}
  Since $\Fix(\bar\phi)$ has no accumulation points in $\Sigma$ (Section~\ref{ss: simple flows}), every leaf $L$ in $M^0$ has a neighborhood $V$ such that $V\cap M^0=L$. Thus the result follows using that $M$ is compact, and $M^0$ is closed in $M$.
\end{proof}

By Proposition~\ref{p: M^0 is a finite union of compact leaves} and since the leaves in $M^1$ have no holonomy, $\FF$ is almost without holonomy (Section~\ref{ss: folns almost w/o hol}), and only a finite number of leaves may have holonomy. According to Remark~\ref{r: Hector's description of folns almost w/o hol}~(\ref{i: adding leaves to M^0}), we can consider Hector's description with this choice of $M^0$ and $M^1$, even though there may be leaves without holonomy in $M^0$. Consider also the rest of the notation of Section~\ref{ss: folns almost w/o hol}. If the leaves in $M^1$ are not compact, then $M^1$ is just the transitive point set of $\FF$.

Given any leaf $L\subset M^0$ and $p\in L$, let $(x,y):U\to\Sigma\times B''$ be a foliated chart around $p$ like in Remark~\ref{r: transversely simple}~(\ref{i: overline Z = varkappa_Lx partial_x}), where $\Sigma$ is some open interval containing $0$. Let $\bfh:\pi_1 L\to\Hol L$, $\sigma\mapsto\bfh_\sigma$, be the holonomy homomorphism of $L$ at $p$. Via the projection $x:U\to\Sigma$, we can regard $\Hol L$ as a subgroup of the group of germs at $0$ of local transformations of $\Sigma$ such that $0$ is a fixed point in their domains.

\begin{prop}\label{p: Hol L}
$\Hol L$ consists of germs at $0$ of homotheties on $\R$.
\end{prop}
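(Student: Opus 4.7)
The plan is to use the $\HH$-equivariance of the induced flow $\bar\phi$ on $\Sigma$ together with the description of $\overline Z$ around $L$ provided by Remark~\ref{r: transversely simple}~(\ref{i: overline Z = varkappa_Lx partial_x}), and then invoke Remark~\ref{r: Z = varkappa x partial_x}~(\ref{i: h is a homothety, on J}).

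More precisely, using the foliated chart $(U,(x,y))$ around $p$ with $x(p)=0$ and $\overline Z=\varkappa_L x\partial_x$, the local transversal through $p$ is identified with the interval $\Sigma$, and under this identification the induced flow $\bar\phi$ on $\Sigma$ has infinitesimal generator $\varkappa_L x\partial_x$, so that $\bar\phi^t(x)=e^{\varkappa_L t}x$. Take any element of $\Hol L$ and choose a representative $h$ defined on a neighborhood of $0$ in $\Sigma$ with $h(0)=0$. Because $\phi$ is a foliated flow, the local flow $\bar\phi$ on $\Sigma$ is $\HH$-equivariant (Section~\ref{ss: fol maps}), so on the intersection of the relevant domains we have
\[
h\circ\bar\phi^t=\bar\phi^t\circ h.
\]
Differentiating at $t=0$ shows that $h$ preserves the vector field $\varkappa_L x\partial_x$, and since $\varkappa_L\in\R^\times$, it preserves $x\partial_x$ on a neighborhood of $0$.

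Since $h$ is a smooth pointed embedding $(J,0)\to(\R,0)$ for some open interval $J\ni 0$, Remark~\ref{r: Z = varkappa x partial_x}~(\ref{i: h is a homothety, on J}) applies and yields that $h$ agrees, near $0$, with the restriction of a homothety of $\R$. Thus the germ of $h$ at $0$ is the germ of a homothety, and since the chosen element of $\Hol L$ was arbitrary, this proves the proposition.

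There is essentially no hard obstacle beyond a careful matching of conventions: one needs the description of $\overline Z$ around $L$ in the form $\varkappa_L x\partial_x$ (which is where transverse simplicity of $L$ is used, via Lemma~\ref{l: overline Z = varkappa_Lx partial_x}), and the translation of ``$\phi$ is foliated'' into ``holonomy maps commute with $\bar\phi^t$''. Both are already available in the text, so the proof is a short combination of Remarks~\ref{r: transversely simple}~(\ref{i: overline Z = varkappa_Lx partial_x}) and~\ref{r: Z = varkappa x partial_x}~(\ref{i: h is a homothety, on J}).
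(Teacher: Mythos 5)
Your proof is correct and follows essentially the same route as the paper: both reduce to the normal form $\overline Z=\varkappa_L x\partial_x$ from Remark~\ref{r: transversely simple}~(\ref{i: overline Z = varkappa_Lx partial_x}) and then apply Remark~\ref{r: Z = varkappa x partial_x}~(\ref{i: h is a homothety, on J}) (equivalently Lemma~\ref{l: Z = varkappa x partial_x}~(\ref{i: h is a homothety})). The only difference is that you derive the invariance of $\overline Z$ under holonomy by differentiating the equivariance relation $h\circ\bar\phi^t=\bar\phi^t\circ h$, whereas the paper takes it directly from the identification $\olfX(M,\FF)\equiv\fX(\Sigma,\HH)$; this is a presentational choice, not a different argument.
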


\begin{proof}
All elements of $\Hol L$ can be represented by elements of the group $\Diffeo^+(\R,0)$ of orientation-preserving diffeomorphisms of $\R$ that fix $0$. According to Remark~\ref{r: transversely simple}~(\ref{i: overline Z = varkappa_Lx partial_x}), for the above foliated coordinates $(x,y)$ around $p$, we have $\overline{Z}=\varkappa x\partial_x$ for $\varkappa=\varkappa_L$. Then, by Lemma~\ref{l: Z = varkappa x partial_x}~(\ref{i: h is a homothety}) and Remark~\ref{r: Z = varkappa x partial_x}~(\ref{i: h is a homothety, on J}), any element of $\Hol L$ is the germ at $0$ of a homothety.
\end{proof}

According to Proposition~\ref{p: Hol L}, $\bfh=\bfh_L$ is induced by the homomorphism $\hat h=\hat h_L:\pi_1L\to\Diffeo^+(\R,0)$ whose image consists of homotheties. We get an induced monomorphism $h=h_L:\Gamma:=\pi_1 L/\ker\hat h\to\Diffeo^+(\R,0)$, $\gamma\mapsto h_\gamma$, with $h_\gamma(x)=a_\gamma x$ for some monomorphism $\Gamma\to\R^+\equiv(\R^+,\times)$, $\gamma\mapsto a_\gamma=a_{L,\gamma}$. The holonomy cover $\pi=\pi_L:\widetilde L\to L$ is determined by $\pi_1\widetilde L\equiv\ker\hat h=\ker\bfh$. On some neighborhood of $L$, $\FF$ can be described with the suspension defined by $\pi$ and $h$, recalled in Section~\ref{s: suspension}. 


Every $\FF^1_l$ becomes a complete $\R$-Lie foliation with the structure induced by $Z_l\in\fXcom(M^1_l,\FF^1_l)$, with the original differentiable structure (see Remark~\ref{r: Hector's description of folns almost w/o hol}~(\ref{i: models (2) are Lie foliations})). We use the following notation for its Fedida's description (Sections~\ref{ss: Riem folns} and~\ref{ss: R-Lie folns}): $\pi_l:\widetilde M^1_l\to M^1_l$, $h_l:\Gamma_l:=\Aut(\pi_l)\to\R$, $D_l:\widetilde M^1_l\to\R$ and $\widetilde\FF^1_l=\pi_l^*\FF^1_l$. The abelian and torsion free group $\Gamma_l$ has finite rank because $\pi_1M^1_l\equiv\pi_1\mathring M_l\cong\pi_1M_l$ and $M_l$ is compact. The action of any $\gamma\in\Gamma_l$ on $\widetilde M^1_l$ is denoted by $\tilde p\mapsto\gamma\cdot\tilde p$ or by $T_\gamma$. Let $\widetilde Z_l$ and $\tilde\phi_l$ be the lifts of $Z_l$ and $\phi_l$ to $\widetilde M^1_l$. Recall that $\widetilde Z_l$ is $D_l$-projectable, and we can assume that $D_{l*}\widetilde Z_l=\partial_x$ (Section~\ref{ss: R-Lie folns}).

By Remark~\ref{r: Hector's description of folns almost w/o hol} and Proposition~\ref{p: quasi-analytic => the same models}, we have the following cases for $\FF$:
\begin{enumerate}[(a)]

\item\label{i: fiber bundle} $\FF$ is given by a fiber bundle $M\to S^1$ with connected fibers.

\item\label{i: minimal Lie foln} $\FF$ is an $\R$-Lie foliation with dense leaves.

\item\label{i: all folns FF_l are models (1)} $M^0\ne\emptyset$, $\Hol L\cong\Z$ for all leaves $L\subset M^0$, and the foliations $\FF^1_l$ are given by fiber bundles $M^1_l\to S^1$ with connected fibers.

\item\label{i: all folns FF_l are models (2)} $M^0\ne\emptyset$, $\Hol L$ is a finitely generated abelian group of rank $>1$ for all leaves $L\subset M^0$, and all foliations $\FF^1_l$ are minimal $\R$-Lie foliations.

\end{enumerate}
The case~(\ref{i: fiber bundle}) can be considered as a model~(\ref{i: model 1}) with empty boundary, avoiding the use of models~(\ref{i: model 0}), or it can be cut into models~(\ref{i: model 0}) by adding a finite number of leaves without holonomy to $M^0$ (Remark~\ref{r: Hector's description of folns almost w/o hol}~(\ref{i: adding leaves to M^0})).

\begin{rem}\label{r: M is not compact, transv simple foliated flows}
The results and observations of this section hold without requiring $M$ to be compact, assuming only that $M^0$ is compact.
\end{rem}

\begin{ex}\label{ex: no transv simple foliated flows}
By Proposition~\ref{p: Hol L}, the Reeb foliation $\FF$ on $S^3$ does not admit any transversely simple foliated flow because it has a leaf with holonomy but no infinitesimal holonomy. Actually, this proves that its Reeb components of \cite[Example~3.3.11]{CandelConlon2000-I} cannot show up as models in Hector's description of any foliation on a closed manifold with a simple foliated flow. Similarly, this realization is impossible for Example~\ref{ex: Reeb components}~(\ref{i: f(x) = exp 1/(1-x^2)}),(\ref{i: f(x) = x^2/(1-x^2)}).
\end{ex}

\section{Case of a suspension foliation}\label{s: suspension}

\subsection{Basic definitions}\label{ss: basic defns, suspension}

For a connected closed manifold $L$, let $\hat h:\pi_1L\to\Diffeo^+(\R,0)$ be a homomorphism whose image consists of homotheties (like in Section~\ref{s: transv simple foliated flows}). It induces a monomorphism $h:\Gamma:=\pi_1L/\ker\hat h\to\Diffeo^+(\R,0)$, $\gamma\mapsto h_\gamma$. We have $h_\gamma(x)=a_\gamma x$ for some monomorphism $\Gamma\to\R^+$, $\gamma\mapsto a_\gamma$; in particular, $\Gamma$ is abelian, torsion free and finitely generated. Let $\pi=\pi_L:(\widetilde L,\tilde p)\to(L,p)$ be the pointed regular covering map with $\pi_1\widetilde L=\pi_1(\widetilde L,\tilde p)\equiv\ker\hat h$, and therefore $\Aut(\pi)\equiv\Gamma$. We may use the notation $[\tilde y]=\pi(\tilde y)$ for $\tilde y\in\widetilde L$. The canonical left action of every $\gamma\in\Gamma$ on $\widetilde L$ is denoted by $T_\gamma$ or $\tilde y\mapsto\gamma\cdot\tilde y$. For the diagonal left action of $\Gamma$ on $\widetilde M=\R\times\widetilde L$, $\gamma\cdot(x,\tilde y)=(a_\gamma x,\gamma\cdot\tilde y)$, let $M=\Gamma\backslash\widetilde M$. The canonical projection $\pi_M:\widetilde M\to M$ is a $\Gamma$-cover with deck transformations $h_\gamma\times T_\gamma$ ($\gamma\in\Gamma$). Write $[x,\tilde y]=\pi_M(x,y)$ for $(x,\tilde y)\in\widetilde M$. Let $\widetilde\varpi:\widetilde M\to\widetilde L$ denote the second factor projection, and let $\widetilde\FF$ be the foliation on $\widetilde M$ with leaves $\{x\}\times\widetilde L$ ($x\in\R$). Since $\widetilde\varpi$ is $\Gamma$-equivariant, it induces a fiber bundle map $\varpi:M\to L$, defined by $\varpi([x,\tilde y])=[\tilde y]$. On the other hand, since $\widetilde\FF$ is $\Gamma$-invariant, it induces a foliation $\FF$ on $M$ so that $\pi^*\FF=\widetilde\FF$, which is transverse to the fibers of $\varpi$. $(M,\FF)$ is called the \emph{suspension} defined by $\hat h$ (or $h$) and $\pi$ \cite[Section~3.1]{CandelConlon2000-I}. Note that the typical fiber of $\varpi$ is $\R$ because the corresponding fibers of $\widetilde\varpi$ and $\varpi$ can be identified via $\pi_M$. Since $0$ is fixed by the $\Gamma$-action on $\R$, the leaf $\{0\}\times\widetilde L\equiv\widetilde L$ of $\widetilde\FF$ is $\Gamma$-invariant, and $\pi_M(\{0\}\times\widetilde L)\equiv L$ is a compact leaf of $\FF$. The other leaves of $\widetilde\FF$ are diffeomorphic via $\pi_M$ to the corresponding leaves of $\FF$ because the elements of $\Gamma\sm\{e\}$ have no fixed points in $\R^\times$. Given $\tilde y\in\widetilde L$ and $y=[\tilde y]\in L$, the fiber $\varpi^{-1}(y)\equiv\widetilde\varpi^{-1}(\tilde y)=\R\times\{\tilde y\}\equiv\R$ is a global transversal of $\FF$ through $[0,\tilde y]\equiv y$. Note that the holonomy homomorphism $\bfh:\pi_1L\to\Hol L$ is induced by $h$, and therefore $\widetilde L^{\text{\rm hol}}\equiv\widetilde L$. The standard orientation of $\R$ induces a transverse orientation of $\widetilde\FF$, which is $\Gamma$-invariant, giving rise to a transverse orientation of $\FF$. 

$\FF$ is transversely affine foliation on an open manifold. Its description of Section~\ref{ss: homogeneous folns} is given by $\pi_M:\widetilde M\to M$, the first factor projection $D:\widetilde M\to\R$ and $h:\Gamma\to\Aff^+(\R)$. In this case, $D$ induces an identity $\widetilde M/\widetilde\FF\equiv\R$, and therefore the inclusions of~(\ref{olfX(M, FF) supset fX(im D, Hol FF)}) and~(\ref{Hol L < Hol_x FF}) are equalities (cf.\ Proposition~\ref{p: olfX(M, FF) equiv fX(im D, Gamma), transversely affine} for the case where $M$ is closed).

\subsection{Transversely simple vector fields on a suspension foliation}\label{ss: vector fields, suspension}

Given any $\varkappa\in\R^\times$, consider the  transversely simple foliated flow $\tilde\xi$ on $(\widetilde M,\widetilde\FF)$ given by $\tilde\xi^t(x,\tilde y)=(e^{\varkappa t}x,\tilde y)$, whose infinitesimal generator is $\widetilde Y=(\varkappa x\partial_x,0)\in\fXcom(\widetilde M,\widetilde\FF)$.  With the notation of Section~\ref{s: transv simple foliated flows} for $\tilde\xi$, we have $\Fix(\tilde\xi^t)=\widetilde M^0=\{0\}\times\widetilde L\equiv\widetilde L$, and the orbits on $\widetilde M^1$ are the fibers of the restriction $\widetilde\varpi:\widetilde M^1\to\widetilde L$. Since $\tilde\xi^t$ is $\Gamma$-equivariant and $\widetilde Y$ is $\Gamma$-invariant, they can be projected to $M$ obtaining a transversely simple foliated flow $\xi^t$ with infinitesimal generator $Y\in\fX(M,\FF)$, satisfying $\Fix(\xi^t)=M^0=\pi_M(\widetilde M^0)\equiv L$, and the orbits on $M^1$ are the fibers of the restriction $\varpi:M^1\to\widetilde L$. Moreover $\overline Y\equiv\varkappa x\partial_x$ on $\R$ via~(\ref{olfX(M, FF) supset fX(im D, Hol FF)}), whose flow $\bar\xi$ is given by $\bar\xi^t(x)=e^{\varkappa t}x$.

$\FF^1_\pm\equiv\mathring\FF_\pm$ on $M^1_\pm\equiv\mathring M_\pm$ is a transversely complete $\R$-Lie foliation with the structure defined by $Y_\pm\in\fXcom(M^1_\pm,\FF^1_\pm)$ (see Remark~\ref{r: M is not compact, transv simple foliated flows}). In its Fedida's description (Section~\ref{ss: Riem folns}), $\widetilde M^1_\pm$ is the holonomy covering of $M^1_\pm$, whose group of deck transformations is also $\Gamma$. The developing map $D_\pm:\widetilde M^1_\pm\to\R$ and holonomy homomorphism $h_\pm:\Gamma\to\R$ can be chosen to be given by $D_\pm(x,y)=\varkappa^{-1}\ln|x|=:t$ and $h_\pm(\gamma)=\varkappa^{-1}\ln a_\gamma$, and therefore $\Hol\FF_\pm=\{\,\varkappa^{-1}\ln a_\gamma\mid\gamma\in\Gamma\,\}$. In this way, $(D_\pm)_*\widetilde Y_\pm=\partial_t$, like in Section~\ref{ss: R-Lie folns}.

Let $\phi$ be any transversely simple foliated flow on $M$, with infinitesimal generator $Z\in\fXcom(M,\FF)$, such that $M^0=L$. According to Remark~\ref{r: transversely simple}~(\ref{i: overline Z = varkappa_Lx partial_x}), we can assume $\bar\phi=\bar\xi$ and $\overline Z=\overline Y$. Then the lifts to $\widetilde M$, $\tilde\phi$ of $\phi$ and $\widetilde Z$ of $Z$, are of the form
\begin{equation}\label{tilde phi^t(tilde y,x), widetilde Z}
\tilde\phi^t(x,\tilde y)=(e^{\varkappa t}x,\tilde\phi_x^t(\tilde y))\;,\quad\widetilde Z=(\varkappa x\partial_x,\widetilde Z_x)\;,
\end{equation}
for smooth families, $\{\,\widetilde\phi^t_x\mid x,t\in\R\,\}\subset\Diffeo(\widetilde L)$ and $\{\,\widetilde Z_x\mid x\in\R\,\}\subset\fX(\widetilde L)$. In particular, $\widetilde Z_0$ is the restriction of $\widetilde Z$ to $\widetilde L\equiv\{0\}\times\widetilde L$, and its flow is $\phi_0=\{\tilde\phi_0^t\}$. Thus $\widetilde Z_0$ is $\Gamma$-invariant and $\tilde\phi_0$ is $\Gamma$-equivariant, inducing the restrictions of $Z$ and $\phi$ to $L$, denoted by $Z_0$ and $\phi_0$.

\begin{prop}\label{p: phi_0 simple => phi simple in M^0}
The flow $\phi_0$ is simple if and only if the fixed points and closed orbits of $\phi$ in $M^0$ are simple.
\end{prop}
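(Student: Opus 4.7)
The plan is to lift $\phi$ to $\widetilde M$ via~\eqref{tilde phi^t(tilde y,x), widetilde Z} and compare the linearizations of $\phi$ at its fixed points and closed orbits in $M^0 = L$ with those of $\phi_0$. Since $Z|_L \equiv Z_0$, the fixed points of $\phi$ in $M^0$ coincide with those of $\phi_0$, and the closed orbits of $\phi$ contained in $M^0$ coincide with those of $\phi_0$. It therefore suffices to match the simplicity conditions at corresponding objects.

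For a fixed point $p \in L$, I would pick foliated coordinates $(x, y_1, \ldots, y_{n-1})$ around $p$ as in Remark~\ref{r: transversely simple}~(\ref{i: overline Z = varkappa_Lx partial_x}), so that $L = \{x = 0\}$ and $\overline Z = \varkappa x \partial_x$ with $\varkappa = \varkappa_L$. Then
\[
Z = \varkappa x\,\partial_x + \sum_i a_i(x, y)\,\partial_{y_i}, \qquad a_i(0, y) = (Z_0)_i(y),
\]
and in the decomposition $T_pM = \R\,\partial_x \oplus T_pL$ the endomorphism $H_p$ satisfying $\phi^t_* = e^{tH_p}$ takes the block lower triangular form
\[
H_p = \begin{pmatrix} \varkappa & 0 \\ \ast & H_{0, p} \end{pmatrix},
\]
where $H_{0, p}$ is the analogous endomorphism of $T_pL$ for $\phi_0$. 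Since $\varkappa \in \R^\times$, $H_p$ is an automorphism if and only if $H_{0, p}$ is one.

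For a closed orbit $c \subset L$ of minimal period $\ell$, I would pick $p \in c$ with lift $(0, \tilde y) \in \widetilde M$ and differentiate~\eqref{tilde phi^t(tilde y,x), widetilde Z} at $(0, \tilde y)$ to obtain
\[
\tilde\phi^{\ell}_*\big|_{(0, \tilde y)} = \begin{pmatrix} e^{\varkappa\ell} & 0 \\ \ast & (\tilde\phi_0^{\ell})_*\big|_{\tilde y} \end{pmatrix}
\]
on $T_{(0, \tilde y)}\widetilde M = \R\,\partial_x \oplus T_{\tilde y}\widetilde L$. Since $Z(p) \in T_pL$, we have $N_p\phi = \R\,\partial_x \oplus (T_pL/\R\,Z(p))$, and the map induced by $\phi^{\ell}_*$ on $N_p\phi$ remains block lower triangular, with eigenvalues $e^{\varkappa\ell}$ together with the eigenvalues of the map induced by $(\phi_0^{\ell})_*$ on $T_pL/\R\,Z(p)$. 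Because $\varkappa \neq 0$ and $\ell > 0$, we have $e^{\varkappa\ell} \neq 1$, so the full eigenvalue set avoids $1$ if and only if the one coming from $\phi_0$ does. Combining both cases yields the stated equivalence.

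The main obstacle is little more than bookkeeping: verifying that the block lower triangular structure above is intrinsic, not an artifact of the chart. This follows from two structural facts — $\phi$ preserves $L$, so $\phi^t_*$ preserves $T_pL$ at $p \in L$ (giving the vanishing upper-right block); and the prescribed transverse normal form $\overline Z = \varkappa x\partial_x$ forces the upper-left entry to be $\varkappa$, respectively $e^{\varkappa\ell}$ after exponentiation. Once this is in place, the rest is linear algebra.
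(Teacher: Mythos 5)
Your strategy --- lift the flow via~\eqref{tilde phi^t(tilde y,x), widetilde Z}, split $T_{(0,\tilde y)}\widetilde M=\R\oplus T_{\tilde y}\widetilde L$, and read off a block lower triangular linearization --- is exactly the one the paper uses, and your treatment of fixed points is correct (in fact slightly more careful than the paper's, which writes the linearization as a direct sum rather than a triangular block matrix; this changes nothing about invertibility or eigenvalues). The problem is the closed orbit case. If $c\subset L$ has minimal period $\ell$ and $y=[\tilde y]\in c$, then $\tilde\phi^{\ell}$ does \emph{not} fix $(0,\tilde y)$: it sends it to $(0,\gamma\cdot\tilde y)$, where $\gamma\in\Gamma$ is determined by $\tilde\phi_0^{\ell}(\tilde y)=\gamma\cdot\tilde y$ and is in general nontrivial (it is the class of the loop $c$ in $\Gamma=\pi_1L/\ker\hat h$). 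Hence $\tilde\phi^{\ell}_{*(0,\tilde y)}$ maps $T_{(0,\tilde y)}\widetilde M$ to $T_{(0,\gamma\cdot\tilde y)}\widetilde M$, and to obtain the return endomorphism of $T_yM$ one must compose with the deck transformation $(h_{\gamma^{-1}}\times T_{\gamma^{-1}})_*$, which multiplies the transverse factor by $a_\gamma^{-1}$. The leafwise block is still conjugate to $\phi^{\ell}_{0*y}$ (because $\pi_L\circ T_{\gamma^{-1}}=\pi_L$), but the transverse eigenvalue of $\phi^{\ell}_*$ on $N_y\phi$ is $a_\gamma^{-1}e^{\varkappa\ell}$, not $e^{\varkappa\ell}$, and nothing prevents it from equalling $1$. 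Concretely, take $L=S^1=\R/\Z$, $\Gamma=\Z$ acting on $\widetilde M=\R\times\R$ by $n\cdot(x,s)=(e^{n\varkappa}x,s+n)$, and $\widetilde Z=(\varkappa x\partial_x,\partial_s)$: then $\tilde\phi^1(x,s)=(e^{\varkappa}x,s+1)=1\cdot(x,s)$, so $\phi^1=\id_M$, the closed orbit $L$ of $\phi$ has return map equal to the identity on $N_y\phi$ and is not simple, yet $\phi_0$ is the rotation flow on $S^1$ and is simple.

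So the ``only if'' direction of the statement fails for closed orbits as you have argued it; only the ``if'' direction for closed orbits and the full fixed point equivalence actually come out of the computation. You are in good company: the paper's own proof makes the identical slip, identifying $N_{[0,\tilde y]}\phi$ with $\R\oplus N_y\phi_0$ once through $(0,\tilde y)$ and once through $(0,\gamma\cdot\tilde y)$, the two copies of $\R$ differing by the factor $a_\gamma$, so that $e^{\varkappa\ell(c)}$ is a matrix entry with respect to mismatched bases rather than an eigenvalue. The damage is contained because the proposition is only ever invoked later for flows $\phi_0$ without closed orbits (gradients of Morse functions), where the fixed point half --- which both you and the paper establish correctly --- is all that is used. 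A fully correct version of your closed orbit computation would replace $e^{\varkappa\ell(c)}$ by $a_\gamma^{-1}e^{\varkappa\ell(c)}$ and add the hypothesis that this quantity is $\ne 1$, or simply restrict the equivalence to fixed points.
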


\begin{proof}
Let $\tilde y\equiv(0,\tilde y)\in\widetilde L\equiv\widetilde M^0$ and $y=[\tilde y]\equiv[0,\tilde y]\in L\equiv M^0$. Suppose that $y\in\Fix(\phi_0)\equiv\Fix(\phi)\cap M^0$, and therefore $\tilde y\in\Fix(\tilde\phi_0)\equiv\Fix(\tilde\phi)\cap\widetilde M^0$. By~(\ref{tilde phi^t(tilde y,x), widetilde Z}),
\begin{gather*}
T_{[0,\tilde y]}M\equiv T_{(0,\tilde y)}\widetilde M\equiv\R\oplus T_{\tilde y}\widetilde L\equiv\R\oplus T_yL\;,\\
\phi^t_{*[0,\tilde y]}\equiv\tilde\phi^t_{*(0,\tilde y)}\equiv e^{\varkappa t}\oplus\tilde\phi^t_{0*\tilde y}
\equiv e^{\varkappa t}\oplus\phi^t_{0*y}\;.
\end{gather*}
So $p$ is simple for $\phi$ if and only if $y$ is simple for $\phi_0$. 

Now suppose that $y$ is in some closed orbit $c$ of $\phi_0$, which can be also considered as a closed orbit of $\phi$ in $M^0$. Then there is some $\gamma\in\Gamma$ such that $\tilde\phi_0^{\ell(c)}(\tilde y)=\gamma\cdot\tilde y$. As before,
\begin{gather*}
N_{[0,\tilde y]}\phi\equiv N_{(0,\tilde y)}\tilde\phi\equiv\R\oplus N_{\tilde y}\tilde\phi_0\equiv\R\oplus N_y\phi_0\;,\\
N_{[0,\tilde y]}\phi\equiv N_{(0,\gamma\cdot\tilde y)}\tilde\phi\equiv\R\oplus N_{\gamma\cdot\tilde y}\phi_0\equiv\R\oplus N_y\phi_0\;,\\
\phi^{\ell(c)}_{*[0,\tilde y]}\equiv\tilde\phi^{\ell(c)}_{*(0,\tilde y)}\equiv e^{\varkappa\ell(c)}\oplus\tilde\phi^{\ell(c)}_{0*\tilde y}
\equiv e^{\varkappa\ell(c)}\oplus\phi^{\ell(c)}_{0*y}\;.
\end{gather*}
So $c$ is simple for $\phi$ if and only if it is simple for $\phi_0$. 
\end{proof}

\begin{prop}\label{p: A -> B}
For every simple $A\in\fX(L)$ without closed orbits, there is some simple $B\in\fXcom(M,\FF)$ without closed orbits such that $\overline{B}=\overline Y$ and $B_0\equiv A$. 
\end{prop}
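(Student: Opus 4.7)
My strategy is to lift $A$ to $\widetilde L$ and combine it with the transverse piece $\varkappa x\,\partial_x$ on the $\R$-factor. Concretely, let $\widetilde A\in\fX(\widetilde L)$ be the $\pi$-lift of $A$, which is automatically $\Gamma$-invariant and complete (paths lift uniquely along the covering $\pi:\widetilde L\to L$, and $A$ is complete because $L$ is closed). Define
\[
\widetilde B(x,\tilde y)=\varkappa x\,\partial_x+\widetilde A(\tilde y)\in T_{(x,\tilde y)}\widetilde M
\]
in the splitting $T\widetilde M=T\R\oplus T\widetilde L$. The first summand is invariant under every homothety $h_\gamma$ (since $h_{\gamma*}(\varkappa x\partial_x)=\varkappa x\partial_x$), and the second summand is $\Gamma$-invariant by construction; hence $\widetilde B$ is invariant under the diagonal $\Gamma$-action $h_\gamma\times T_\gamma$, and descends to $B\in\fX(M)$. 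The two summands commute (they act on different factors), so the flow of $\widetilde B$ is the product of the complete flows $x\mapsto e^{\varkappa t}x$ and $\tilde\phi_0^t$ of $\widetilde A$; it preserves the leaves $\{x\}\times\widetilde L$ of $\widetilde\FF$, descends to a flow on $M$, and gives $B\in\fXcom(M,\FF)$.

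The two equalities $\overline B=\overline Y$ and $B_0\equiv A$ are then built into the construction. Indeed, under $D:\widetilde M\to\R$ the transverse vector field defined by $\widetilde B$ pushes forward to $\varkappa x\partial_x$, which is exactly the representative of $\overline Y$ in $\fX(\R,\Hol\FF)$ via~\eqref{olfX(M, FF) supset fX(im D, Hol FF)}. At $x=0$ the first summand vanishes, so $\widetilde B|_{\{0\}\times\widetilde L}$ agrees with $\widetilde A$ under $\{0\}\times\widetilde L\equiv\widetilde L$; projecting by $\pi_M$ identifies this restriction with $A$ on $L$, i.e.\ $B_0\equiv A$.

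It remains to verify that $B$ is simple with no closed orbits. Since $\overline B=\varkappa x\partial_x$ vanishes only at $x=0$, the zero set $\Fix(B)$ lies in $M^0\equiv L$ and equals $\Fix(A)$; the splitting $T_pM\equiv\R\oplus T_pL$ for $p\in\Fix(A)$ gives $H_p=\varkappa\oplus(DA)_p$, which is invertible because $\varkappa\ne0$ and $A$ is simple, so every fixed point is simple (alternatively, apply Proposition~\ref{p: phi_0 simple => phi simple in M^0}). On $M^0$ the flow of $B$ is the flow of $A$, which has no closed orbits. For a putative closed orbit through $[x,\tilde y]$ with $x\ne 0$ and period $t>0$, the lifted flow $\tilde\phi^t(x,\tilde y)=(e^{\varkappa t}x,\tilde\phi_0^t(\tilde y))$ must satisfy $(e^{\varkappa t}x,\tilde\phi_0^t(\tilde y))=(a_\gamma x,\gamma\cdot\tilde y)$ for some $\gamma\in\Gamma$. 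Projecting the second coordinate to $L$ forces $\phi_0^t([\tilde y])=[\tilde y]$; absence of closed orbits of $A$ then gives $[\tilde y]\in\Fix(A)$, so $\tilde\phi_0^t(\tilde y)=\tilde y$, whence $\gamma\cdot\tilde y=\tilde y$, so $\gamma=e$ by freeness of the deck action, whence $t=0$, a contradiction.

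The only step that really needs care is the last one: one has to exclude closed orbits on $M^1$ that could arise from the interplay between the transverse scaling on $\R^\times$ (whose orbits return to the same $\Gamma$-orbit for special times) and the leafwise dynamics. Freeness of the $\Gamma$-action on $\widetilde L$, together with the assumption that $A$ has no closed orbits, is what makes this obstruction disappear.
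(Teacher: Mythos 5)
Your proposal is correct and follows essentially the same route as the paper: lift $A$ to $\widetilde A$, set $\widetilde B=(\varkappa x\partial_x,\widetilde A)$, descend by $\Gamma$-invariance, and rule out closed orbits in $M^1$ by the same lifting argument ($e^{\varkappa t}=a_\gamma$, $\gamma\cdot\tilde y=\tilde y\Rightarrow\gamma=e\Rightarrow t=0$), with simplicity of the fixed points coming from Proposition~\ref{p: phi_0 simple => phi simple in M^0}. No gaps.
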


\begin{proof}
Let $\widetilde A\in\fXcom(\widetilde L)$ be the lift of $A$, whose flow is denoted by $\tilde\zeta$, and let $\widetilde B=(\varkappa x\partial_x,\widetilde A)\in\fX(\widetilde M,\widetilde\FF)$. Clearly, $\widetilde B_0\equiv\widetilde A$ and $\overline{\widetilde B}=\overline{\widetilde Y}$. Moreover $\widetilde B$ is complete because its flow $\tilde\eta$ is given by $\tilde\eta^t(x,\tilde y)=(e^{\varkappa t}x,\tilde\zeta^t(\tilde y))$. Since $\widetilde B$ is $\Gamma$-invariant, it induces some $B\in\fXcom(M,\FF)$ with flow $\eta$. 

\begin{claim}\label{cl: eta has neither fixed points nor closed orbits in M^1}
The flow $\eta$ has neither fixed points nor closed orbits in $M^1$.
\end{claim}

By absurdity, suppose that $\eta^t([x,\tilde y])=[x,\tilde y]$ for some $[x,\tilde y]\in M^1$ and $t>0$. Then there is some $\gamma\in\Gamma$ such that $\tilde\eta^t(x,\tilde y)=\gamma\cdot(x,\tilde y)$. Since $x\ne0$, this means that $e^{\varkappa t}=a_\gamma$ and $\tilde\zeta^t(\tilde y)=\gamma\cdot\tilde y$. Thus $\zeta^t(y)=y$ for $y=[\tilde y]$. Hence $y\in\Fix(\zeta)$ because $\zeta$ has no closed orbits, and therefore $\tilde y\in\Fix(\tilde\zeta)$. It follows that $\gamma\cdot\tilde y=\tilde y$, yielding $\gamma=e$. So $e^{\varkappa t}=1$, obtaining $\varkappa t=0$, a contradiction.

By Proposition~\ref{p: phi_0 simple => phi simple in M^0}, Claim~\ref{cl: eta has neither fixed points nor closed orbits in M^1} and since $\eta_0\equiv\zeta$, it follows that $\eta$ is simple without closed orbits.
\end{proof}

\subsection{Differential forms defining a suspension foliation}\label{ss: diff forms defining the suspension}

For $k=\rank\Gamma$, fix generators $\gamma_1,\dots,\gamma_k$ of $\Gamma$. Let $c_i$ be a piecewise smooth loop in $L$ based at $p$ such that $[c_i]\in\pi_1(L,p)$ defines $\gamma_i$, and let $a_i=a_{\gamma_i}$. By the universal coefficients and Hurewicz theorems, there are closed $1$-forms $\beta_1,\dots,\beta_k$ on $L$ so that $\delta_{ij}=\langle[\beta_i],[c_j]\rangle=\int_0^1c_j^*\beta_i$ and $\langle[\beta_i],\ker\hat h\rangle=0$. Thus every $\pi^*\beta_i$ is exact on $\widetilde L$. Let $\theta=-\ln(a_1)\,\beta_1-\dots-\ln(a_k)\,\beta_k$. Then $\tilde\theta=\pi^*\theta=dF$ for some $F\in C^\infty(\widetilde L)$. With some abuse of notation, let $\theta\equiv\varpi^*\theta$, $\tilde\theta\equiv\widetilde\varpi^*\tilde\theta$ and $F\equiv\widetilde\varpi^*F$. It is easy to check that $T_\gamma^*F=F-\ln a_\gamma$ on $\widetilde L$ for all $\gamma\in\Gamma$. Thus $\tilde\rho=e^Fx$ and $\tilde\omega=|\varkappa|^{-1}e^F\,dx$ are $\Gamma$-invariant on $\widetilde M$. Furthermore $\tilde\rho$ is a defining function of $\widetilde L$ on $\widetilde M$, $\tilde\omega$ defines $\widetilde\FF$, $d\tilde\omega=\tilde\theta\wedge\tilde\omega$ and $d\tilde\rho=\tilde\rho\tilde\theta+|\varkappa|\,\tilde\omega$. We get an induced defining function $\rho$ of $L$ on $M$, and an induced form $\omega$ defining of $\FF$, so that $d\omega=\theta\wedge\omega$ and $d\rho=\rho\theta+|\varkappa|\,\omega$. We also get $M\equiv\R_\rho\times L_\varpi$, giving rise to smaller tubular neighborhoods $T_\epsilon\equiv(-\epsilon,\epsilon)_\rho\times L_\varpi$ ($\epsilon>0$).

\subsection{Change of the differentiable structure}\label{ss: change of diff struct}

Given $0<\alpha\ne1$, let $f_\alpha:\R\to\R$ be the homeomorphism defined by $f_\alpha(x)=\sign(x)|x|^\alpha=x\,|x|^{\alpha-1}$. The restrictions $f_\alpha:\R^\pm\to\R^\pm$ are diffeomorphisms, but $f_\alpha$ is not diffeomorphism around $0$. Clearly, $f_\alpha(a_\gamma x)=a_\gamma^\alpha f_\alpha(x)$, and it is easy to check that $f_{\alpha*}(x\partial_x)=\alpha u\partial_u$ on $\R^\pm$, using the coordinate $u=f_\alpha(x)$.  Like in Section~\ref{ss: basic defns, suspension}, let $h_\alpha:\Gamma\to\Diffeo^+(\R,0)$ be the monomorphism defined by $h_{\alpha,\gamma}(u)=a_\gamma^\alpha u$, and let $(M_\alpha,\FF_\alpha)$ be the suspension defined with $h_\alpha$ and $\pi$. The foliated homeomorphism $\widetilde\Upsilon_\alpha=f_\alpha\times\id$ of $(\widetilde M,\widetilde\FF)$ is equivariant with respect to the $\Gamma$-actions defined by $h$ and $h_\alpha$, and therefore it induces a foliated homeomorphism $\Upsilon_\alpha:(M,\FF)\to(M_\alpha,\FF_\alpha)$. The restriction $\Upsilon_\alpha:(M^1,\FF^1)\to(M^1_\alpha,\FF^1_\alpha)$ is a diffeomorphism. 

A transversely simple foliated flow $\xi_\alpha$ on $(M_\alpha,\FF_\alpha)$, with infinitesimal generator $Y_\alpha$, can be defined like $\xi$ and $Y$ in Section~\ref{ss: vector fields, suspension}, using $\varkappa\alpha$ instead of $\varkappa$, and we get $\Upsilon_{\alpha*}Y=Y_\alpha$ on $M^1_\alpha$. With more generality, for any transversely simple foliated flow $\phi$ on $(M,\FF)$, with infinitesimal generator $Z\in\fXcom(M,\FF)$, such that $\bar\phi=\bar\xi$ and $\overline Z=\overline Y$, there is a transversely simple foliated flow $\phi_\alpha$ on $(M_\alpha,\FF_\alpha)$, with infinitesimal generator $Z_\alpha$, such that $\bar\phi_\alpha=\bar\xi_\alpha$, $\overline{Z_\alpha}=\overline{Y_\alpha}$, and $\Upsilon_{\alpha*}Z=Z_\alpha$ on $M^1_\alpha$. Precisely, using~(\ref{tilde phi^t(tilde y,x), widetilde Z}), their lifts $\tilde\phi_\alpha$ and $\widetilde Z_\alpha$ to $\widetilde M$ are given by
\[
\tilde\phi_\alpha^t(u,\tilde y)=(e^{\varkappa\alpha t}u,\tilde\phi_u^t(\tilde y))\;,\quad\widetilde Z_\alpha=(\varkappa\alpha u\partial_u,\widetilde Z_u)\;.
\] 

In other words, we get a new differentiable structure on $(M,\FF)$ via $\Upsilon_\alpha$, which agrees with the original one on $M^1$. This will be called a \emph{transverse power change} of the differentiable structure (around the leaf $L$). With this point of view, $\phi$ is a smooth transversely simple foliated flow with both differentiable structures, replacing $\varkappa$ with $\varkappa\alpha$. In this way, we can change $|\varkappa|$ arbitrarily, but keeping $\sign(\varkappa)$ invariant.

With the new differentiable structure, $C^\infty(M)$ is generated by $\rho_\alpha:=\rho\,|\rho|^{\alpha-1}$ and $C^\infty(L)\equiv\varpi^*C^\infty(L)$. Moreover $\rho_\alpha$ is a defining function of $L$, $\omega_\alpha:=\rho^{\alpha-1}\omega$ and $\theta_\alpha$ have smooth extensions to $M$, $\omega_\alpha$ defines $\FF$, $d\omega_\alpha=\theta_\alpha\wedge\omega_\alpha$ and $d\rho_\alpha=\rho_\alpha\theta_\alpha+|\alpha\varkappa|\,\omega_\alpha$.

\section{Global structure}\label{s: global str}

Consider the notation of Section~\ref{s: transv simple foliated flows}, where $M$ is compact, $\FF$ is transversely oriented, and $\phi$ is transversely simple. 

\subsection{Tubular neighborhoods of the components of $M^0$}\label{ss: tubular neighborhoods}

In the following, $L$ runs in $\pi_0M^0$ (the set of leaves in $M^0$), and we have corresponding objects $\hat h_L$, $h_L$, $\Gamma_L$, $\pi_L:\widetilde L\to L$, $a_{L,\gamma}$ and $\varkappa_L$, defined by $\FF$ and $\phi$. Consider the constructions of Sections~\ref{ss: basic defns, suspension}--\ref{ss: diff forms defining the suspension}, using this data, adding a prime and the subindex ``$L$'' to their notation: the suspension $(M'_L,\FF'_L)$ defined with $h_L$, with projection $\varpi'_L:M'_L\to L$, the transversely simple foliated flow $\xi'_L$ with infinitesimal generator $Y'_L$, the differential forms $\omega'_L$ and $\theta'_L$, the defining function $\rho'_L$, and the tubular neighborhoods $T'_{\epsilon,L}$.

By the Reeb's local stability, there are foliated diffeomorphisms between the restrictions of $\FF$ and $\FF'$ to tubular neighborhoods, $T_{L,0}$ of $L$ in $M$ and $T'_{L,0}:=T'_{L,\epsilon_0}$ ($\epsilon_0>0$) of $L$ in $M'_L$, so that the projection $\varpi_L$ of $T_{L,0}$ corresponds to the projection $\varpi'_L$ of $T'_{L,0}$. We will simply write $\FF\equiv\FF'_L$ and $\varpi_L\equiv\varpi'_L$ on $T_{L,0}\equiv T'_{L,0}$.  We can assume that the sets $\overline{T_{L,0}}$ are disjoint in $M$, and $\bar\phi\equiv\bar\xi'_L$ and $\overline Z\equiv\overline{Y'_L}$ on $T_{L,0}\equiv T'_{L,0}$ (Remark~\ref{r: transversely simple}~(\ref{i: overline Z = varkappa_Lx partial_x})). Fix also smaller tubular neighborhoods, $T_L\equiv T'_L:=T'_{L,\epsilon}$ ($0<\epsilon<\epsilon_0$).

Let $M'=\bigsqcup_LM'_L$, where we consider the combinations of all of the above objects, removing $L$ from the notation: $\FF'$, $\varpi'$, $\xi'$, $Y'$, $\omega'$, $\theta'$ and $\rho'$. Similarly, let $T'=\bigsqcup_LT'_L$, $T'_0=\bigsqcup_LT'_{L,0}$, $T=\bigcup_LT_L$ and $T_0=\bigcup_LT_{L,0}$. 

\begin{prop}\label{p: Z', A}
\begin{enumerate}[(i)]

\item\label{i: Z'} There is some $Z'\in\fXcom(M',\FF')$ such that $\overline{Z'}=\overline{Y'}$, $Z'\equiv Z$ on $T\equiv T'$, and $Z'=Y'$ on $M'\sm T'_0$.

\item\label{i: A -> B} For any $A\in\fX(M',\FF')$ with $\overline A=\overline{Y'}$, there is some $B\in\fX(M,\FF)$ with $\overline B=\overline Z$, $B\equiv A$ on $T\equiv T'$, and $B=Z$ on $M\sm T_0$.

\item\label{i: omega, theta, rho} There are $\omega,\theta\in C^\infty(M;\Lambda^1)$ such that $\omega$ defines $\FF$, $\omega\equiv\omega'$ on $T\equiv T'$ and $d\omega=\theta\wedge\omega$ on $M$.

\end{enumerate}
\end{prop}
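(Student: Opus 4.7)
All three parts follow a common interpolation pattern. One patches a globally defined reference object ($Y'$ on $M'$, $Z$ on $M$, or any defining form $\eta$ of $\FF$ on $M$) against the object defined only on the tubular neighborhood $T_0\equiv T'_0$, using a smooth cutoff $\chi$ supported in $T_0$ and equal to $1$ on a neighborhood of $\overline T\equiv\overline{T'}$. The key algebraic fact is that, since $\overline Z\equiv\overline{Y'}$ on $T_0\equiv T'_0$, any two vector fields there whose transverse projections agree differ by an element of $\fX(\FF)\equiv\fX(\FF')$, so cutoffs of such differences remain leafwise and therefore do not perturb the induced transverse vector field.

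\textbf{Part~(i).} With $\chi\in C^\infty(M')$ as above, define
\[
Z'=Y'+\chi(Z-Y')\quad\text{on }T'_0,\qquad Z'=Y'\quad\text{on }M'\sm T'_0,
\]
where $Z$ is transported to $T'_0$ via the identification $T_0\equiv T'_0$. Then $Z'\equiv Z$ on $T\equiv T'$ and $Z'=Y'$ off $T'_0$ by construction, and $Z'-Y'=\chi(Z-Y')\in\fX(\FF')$, so $\overline{Z'}=\overline{Y'}$. For completeness, $Z'-Y'$ has compact support in $T'_0$, and $Y'$ is complete on $M'$ (its flow is explicitly $\xi'$ from Section~\ref{ss: vector fields, suspension}); a standard ODE argument then gives completeness of $Z'$, since orbits either stay forever in the compact set $\supp(Z'-Y')$ (hence extend by boundedness) or eventually leave it, after which they coincide with complete $Y'$-orbits.

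\textbf{Part~(ii).} Symmetrically, pick $\chi\in C^\infty(M)$ with $\chi\equiv1$ near $\overline T$ and $\supp\chi\subset T_0$, and set
\[
B=Z+\chi(A-Z)\quad\text{on }T_0,\qquad B=Z\quad\text{on }M\sm T_0,
\]
with $A$ transported via $T_0\equiv T'_0$. On $T_0\equiv T'_0$ we have $\overline A=\overline{Y'}=\overline Z$, so $A-Z$ is leafwise and $\chi(A-Z)\in\fX(\FF)$, giving $\overline B=\overline Z$. The required identifications on $T$ and on $M\sm T_0$ are immediate.

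\textbf{Part~(iii).} By transverse orientability of $\FF$, choose any smooth defining form $\eta$ of $\FF$ on $M$. On $T_0\equiv T'_0$, both $\eta$ and $\omega'$ define $\FF$ with compatible transverse orientations (this compatibility is built into the identification $\FF\equiv\FF'_L$ on $T_{L,0}\equiv T'_{L,0}$), so $\omega'=g\eta$ with $g\in C^\infty(T_0)$ and $g>0$. Using the cutoff $\chi$ from~(ii), set
\[
\omega=\chi\omega'+(1-\chi)\eta=(\chi g+1-\chi)\eta\quad\text{on }T_0,\qquad\omega=\eta\quad\text{on }M\sm T_0.
\]
Since $\chi g+1-\chi>0$ (both summands are nonnegative and one is positive at every point), $\omega$ is a smooth defining form of $\FF$ on $M$, and $\omega\equiv\omega'$ on $T$. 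Finally, as $\ker\omega=T\FF$ is involutive, the Frobenius condition produces $\theta\in C^\infty(M;\Lambda^1)$ with $d\omega=\theta\wedge\omega$.

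\textbf{Main obstacle.} The only non-routine point is completeness in~(i): $M'$ is non-compact, so completeness of $Z'$ is not automatic and must be deduced from the compactness of $\supp(Z'-Y')$ together with the explicit form of the $Y'$-flow. Everything else is cutoff bookkeeping, where the leafwise nature of the perturbations $Z-Y'$ and $A-Z$ on $T_0$---which itself follows from the identification $\overline Z\equiv\overline{Y'}$ imposed in Section~\ref{ss: tubular neighborhoods}---guarantees that the cutoffs do not disturb the transverse vector field.
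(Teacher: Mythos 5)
Your proof is correct and follows essentially the same cutoff-interpolation argument as the paper's, down to the same formulas $Y'+\chi(Z-Y')$, $Z+\chi(A-Z)$ and $\chi\omega'+(1-\chi)\eta$, with the added (welcome) care about completeness in part~(i). The only divergence is in part~(iii): after obtaining some $\theta_1$ with $d\omega=\theta_1\wedge\omega$, the paper further sets $\theta=\theta_1+\chi(\theta'-\theta_1)$ so that $\theta\equiv\theta'$ on $T$ --- a property not demanded by the statement but used later in Section~\ref{ss: tubular neighborhoods} --- whereas your $\theta$ need not agree with $\theta'$ there; for the statement as written this makes no difference.
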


\begin{proof}
Let $\lambda\in C^\infty(M)$ such that $0\le\lambda\le1$, $\lambda=1$ on $T$, and $\supp\lambda\subset T_0$, and let $\lambda'\in\Cinftyc(M')$ such that $\supp\lambda\subset T'_0$ and $\lambda'\equiv\lambda$ on $T'_0\equiv T_0$.

To prove~(\ref{i: Z'}), let $Z'_0\equiv Z$ on $T'_0\equiv T_0$, and take $Z'\equiv Y'+\lambda'(Z'_0-Y')$. 

To prove~(\ref{i: A -> B}), let $B_0\equiv A$ on $T_0\equiv T'_0$, and take $B=Z+\lambda(B_0-Z)$.

To prove~(\ref{i: omega, theta, rho}), take $\omega_0\equiv\omega'$ and $\theta_0\equiv\theta'$ on $T_0$. Take $\omega_1\in C^\infty(M;\Lambda^1)$ defining $\FF$. Then $\omega=\lambda\omega_0+(1-\lambda)\omega_1$ also defines $\FF$. Thus $d\omega=\theta_1\wedge\omega$ for some $\theta_1\in C^\infty(M;\Lambda^1)$. We get $(\theta_0-\theta_1)\wedge\omega=0$ on $T$, and therefore~(\ref{i: omega, theta, rho}) is satisfied $\theta=\theta_1+\lambda(\theta_0-\theta_1)$.
\end{proof}

We can also consider a transverse power change of the differential structure on every $M'_L$ around $L$ (Section~\ref{ss: change of diff struct}). The corresponding new differentiable structure on every $T_L\equiv T'_L$ can be combined with the differentiable structure of $M^1$ to produce a new differentiable structure on $M$, also called a \emph{transverse power change} of the differentiable structure (around $M^0$), and keeping $Z\in\fX(M,\FF)$ after this change. In this way, the absolute values $|\varkappa_L|$ can be changed arbitrarily, but keeping every $\sign(\varkappa_L)$ invariant.

Consider the forms $\omega$ and $\theta$ of Proposition~\ref{p: Z', A}~(\ref{i: omega, theta, rho}), and let $\rho\equiv\rho'$ on $T\equiv T'$. With the new differentiable structure, $C^\infty(T)$ is generated by $\rho_\alpha:=\rho\,|\rho|^{\alpha-1}$ and $C^\infty(M^0)\equiv\varpi^*C^\infty(M^0)$. Moreover $\omega_\alpha:=\rho^{\alpha-1}\omega$ and $\theta_\alpha:=\alpha\theta$ have smooth extensions to $T$, $\omega_\alpha$ defines $\FF|_T$, $d\omega_\alpha=\theta_\alpha\wedge\omega_\alpha$ on $T$, and $d\rho_\alpha=\rho_\alpha\theta_\alpha+|\alpha\varkappa_L|\,\omega_\alpha$ on $T_L$. Like in Proposition~\ref{p: Z', A}~(\ref{i: omega, theta, rho}), the restrictions of $\omega_\alpha$ and $\theta_\alpha$ to some smaller tubular neighborhood of $L$ can be extended to $M$, keeping the relation $d\omega_\alpha=\theta_\alpha\wedge\omega_\alpha$.

\subsection{Transverse structure}\label{ss: transv struct}

Let $\PP$ be the pseudogroup on $S^1_\infty=\R\cup\{\infty\}$ generated by the projective rotation $x\mapsto-1/x$, the hyperbolic projective transformations $x\mapsto\lambda x$ ($\lambda>0$), and the diffeomorphisms $x\mapsto x^\alpha$ of $\R^+$ ($\alpha>0$). $\FF$ is called a \emph{$\PP$-foliation} if $\{U_k,x_k\}$ can be chosen such that every $\Sigma_k$ is realized as an open subset of $S^1_\infty$ and the maps $h_{kl}$ belong to $\PP$.

\begin{prop}\label{p: PP-foln}
$\FF$ is a $\PP$-foliation.
\end{prop}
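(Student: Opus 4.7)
The plan is to build a foliated atlas of $\FF$ with transversals realized as open subsets of $S^1_\infty$ and all transition maps in $\PP$, using two kinds of charts. For the first kind, I use the tubular neighborhood $T_L \equiv (-\epsilon,\epsilon)_x \times L_\varpi$ around each compact preserved leaf $L \subset M^0$ (Section~\ref{ss: tubular neighborhoods}), subdivided by foliated charts of $L$ itself, so that the transversal is $(-\epsilon,\epsilon) \subset \R \subset S^1_\infty$ with $L$ at $0$. By Proposition~\ref{p: Hol L}, the transitions within $T_L$ are realized by $\Hol L$, consisting of homotheties $x \mapsto a_{L,\gamma}\, x$ with $a_{L,\gamma} > 0$, and these lie in $\PP$.

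For the second kind, I use the complete $\R$-Lie foliation structure on each connected component $M^1_l$ of $M^1$ generated by $\overline{Z}|_{M^1_l}$. The developing map $D_l \colon \widetilde{M^1_l} \to \R$ provides foliated charts whose internal transitions are translations. I normalize the sign of $D_l$ so that $\partial_t$ aligns with the transverse orientation of $\FF$; this is possible because $M^1_l$ is connected and $\overline{Z}$ is nowhere zero there, so the direction of $\overline{Z}$ relative to the transverse orientation is constant on $M^1_l$ and can be matched by choosing between $D_l$ and $-D_l$. Composing the transverse coordinate with $u = e^t$ yields charts with transversals in $\R^+ \subset S^1_\infty$ whose internal transitions are the dilations $u \mapsto e^\tau u$ with $\tau \in \Hol \FF^1_l \subset \R$, again in $\PP$.

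The decisive step is verifying the transition from a first-kind chart at $L \in \partial M_l$ to an adjacent second-kind chart. From $\overline{Z} \equiv \varkappa_L x \partial_x$ (Remark~\ref{r: transversely simple}) and $D_{l*}\widetilde{Z}_l = \partial_t$, I get $t = \varkappa_L^{-1} \ln|x| + \text{const}$ on the side of $L$ lying in $M^1_l$, hence $u = c\, |x|^{1/\varkappa_L}$. The alignment requirement $\partial_t \parallel \partial_x$ at the boundary forces $\sign(x) = \sign(\varkappa_L)$ on that side. A short computation then shows the transition is either $x \mapsto c\, x^{1/\varkappa_L}$ on $\R^+ \to \R^+$ with positive exponent (when $\varkappa_L > 0$), or $x \mapsto c\,(-x)^{1/\varkappa_L} = c\,(-x)^{-\beta}$ with $\beta = -1/\varkappa_L > 0$ on $\R^- \to \R^+$ (when $\varkappa_L < 0$). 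The first is a power map on $\R^+$ composed with a dilation, and the second is the composition of $-1/x$ on $\R^-$ with $x^\beta$ on $\R^+$ and a dilation; both lie in $\PP$.

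The main obstacle I anticipate is the careful coordination of the approach side, the sign of $\varkappa_L$, and the chosen orientation of each $D_l$, so that every transition falls into one of the two $\PP$-compatible forms above and never into an orientation-reversing form such as $x \mapsto c\,(-x)^\beta$ on $\R^- \to \R^+$ with $c, \beta > 0$, which would not belong to $\PP$. This coordination is forced by the connectedness of $M^1_l$, the global transverse orientability of $\FF$, and the formula $\overline{Z} = \varkappa_L x \partial_x$ in each suspension chart.
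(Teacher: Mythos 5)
Your strategy coincides with the paper's: suspension charts around each compact leaf of $M^0$ whose transitions are homotheties, charts on each component $M^1_l$ obtained by exponentiating the developing map of its $\R$-Lie structure so that the internal transitions are dilations of $\R^+$, and the boundary transition computed by matching $\varkappa_L x\partial_x$ with $u\partial_u$, which produces a power map possibly preceded by the projective rotation. The two $\PP$-compatible forms you isolate are exactly the ones appearing in the paper, and your choice to orient $D_l$ by the transverse orientation is the right one, since every element of $\PP$ is orientation-preserving.

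The gap is in the sign bookkeeping that you defer to your last paragraph and declare to be ``forced''. You impose on $D_l$ two normalizations at once: that $\partial_t$ be positively oriented for the transverse orientation, and that $D_{l*}\widetilde Z_l=\partial_t$. These are incompatible on any component $M^1_l$ on which $\overline Z$ points against the transverse orientation, and such components are unavoidable whenever $M^0\ne\emptyset$: each leaf $L\subset M^0$ has $M^1$ on both sides, and $\overline Z=\varkappa_L x\partial_x$ is anti-aligned with the transverse orientation on exactly one of them (already in the fibration case with two preserved fibers, $\bar\phi$ runs against the orientation of $S^1$ on one of the two complementary arcs). On such a component your deduction $\sign(x)=\sign(\varkappa_L)$ is false, and so is the dichotomy ``first form iff $\varkappa_L>0$''; so as written half of the boundary transitions are unexamined. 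The conclusion survives by your own computation once this is corrected: keeping only the normalization by the transverse orientation, one has $D_{l*}\widetilde Z_l=\epsilon_l\partial_t$ with $\epsilon_l=\pm1$, the side of approach satisfies $\sign(x)=\epsilon_l\sign(\varkappa_L)$, and the transition is $u=c\,|x|^{\epsilon_l/\varkappa_L}$ with $c>0$; since the exponent $\epsilon_l/\varkappa_L$ is positive precisely when that side is $\R^+$, every boundary transition falls into one of your two forms and lies in $\PP$. So the method is correct and the missing piece is only the case $\epsilon_l=-1$, but the specific coordination you assert is not the one that actually holds, and the step $D_{l*}\widetilde Z_l=\partial_t$ must be dropped after orienting $D_l$ by the transverse orientation.
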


\begin{proof}
Since $\FF\equiv\FF'$ on every $T_{L,0}\equiv T'_{L,0}$ (Section~\ref{ss: tubular neighborhoods}), the restriction of $\FF$ to any $T_{L,0}$ has a regular foliated atlas $\{U_a,(x_a,y_a)\}$ such that the corresponding elementary holonomy transformations are restrictions of homotheties. For $\varkappa=\varkappa_L\in\R^\times$, we have $\overline Z=\varkappa x_a\,\partial_{x_a}$ on $x_a(U_a)$ (Section~\ref{ss: tubular neighborhoods}), whose local flow $\bar\phi_a$ is given by $\bar\phi_a(x,t)=e^{\varkappa t}x$.

Now the restrictions $\FF^1_l$ are $\R$-Lie foliations according to Sections~\ref{s: transv simple foliated flows}. Then any $\FF^1_l$ has a regular foliated atlas $\{V_i,(w_i,v_i)\}$ whose elementary holonomy transformations are given by translations, $w_i=h_{ij}(w_j)=w_j+c_{ij}$, between open intervals of $\R$. Taking the new transverse coordinates $u_i=e^{w_i}$, we get another regular foliated atlas $\{V_i,(u_i,v_i)\}$ of $\FF^1_l$, whose elementary holonomy transformations are given by homotheties, $u_i=e^{c_{ij}}u_j$, between open intervals of $\R^+$. Thus $\{V_i,(u_i,v_i)\}$ defines a transversely affine structure of $\FF^1_l$. With the notation of Section~\ref{s: transv simple foliated flows}, we can indeed assume that $\pi_l:\widetilde V_i\to V_i$ is a diffeomorphism for some open $\widetilde V_i\subset\widetilde M^1_l$, and $u_i\pi_l=D_l$ on $\widetilde V_i$. Hence $\overline Z=\partial_{w_i}$ on $w_i(V_i)$, and therefore $\overline Z=u_i\partial_{u_i}$ on $u_i(V_i)$, whose local flow $\bar\phi_i$ is given by $\bar\phi_i^t(u)=e^tu$.

For any nonempty intersection $U_a\cap V_i$, via the corresponding elementary holonomy transformation $h_{ai}=x_au_i^{-1}$, the vector field $\varkappa x_a\,\partial_{x_a}$ corresponds to $u_i\partial_{u_i}$, and therefore $\bar\phi_a$ corresponds to $\bar\phi_i$. Take any $p\in U_a\cap V_i$, and let $\bar p_a=x_a(p)\in\R^\times$ and $\bar p_i=u_i(p)\in\R^+$. Then, for $|t|$ small enough,
\[
h_{ai}(e^t\bar p_i)=h_{ai}\bar\phi_i^t(\bar p_i)=\bar\phi_a^t(\bar p_a)=e^{\varkappa t}\bar p_a=\bar p_a\bar p_i^{-\varkappa}(e^t\bar p_i)^\varkappa\;,
\]
yielding $h_{ai}(u)=\bar p_a\bar p_i^{-\varkappa}u^\varkappa$ for u close enough to $\bar p_i$. Since $h_{ai}$ preserves the orientation, $\bar p_a$ and $\varkappa$ must have the same sign. Then $h_{ai}$ can be expressed as a composition of generators of $\PP$: 
\begin{alignat}{2}
u&\mapsto\tilde u:=u^\varkappa\mapsto \bar p_a\bar p_i^{-\varkappa}\tilde u&\qquad\text{if}\ \bar p_a,\varkappa>0\;,\label{u to tilde u to ...}\\
u&\mapsto\tilde u:=u^{-\varkappa}\mapsto\hat u:=|\bar p_a|^{-1}\bar p_i^\varkappa\tilde u\mapsto-1/\hat u&\qquad\text{if}\ \bar p_a,\varkappa<0\;.\label{u to tilde u to hat u to ...}
\end{alignat}
Thus a union of foliated atlases of these types, for all  $L\in\pi_0M^0$ and foliations $\FF_l$, is a foliated atlas of $\FF$ defining a structure of $\PP$-foliation.
\end{proof}

\begin{prop}\label{p: transv projective}
After performing some transverse power change of the differentiable structure around $M^0$, $\FF$ becomes transversely projective.
\end{prop}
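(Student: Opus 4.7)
The strategy is to exploit the transverse power changes introduced at the end of Section~\ref{ss: tubular neighborhoods} to rescale every $|\varkappa_L|$ to $1$, and then observe that the $\PP$-atlas built in the proof of Proposition~\ref{p: PP-foln} automatically becomes a projective atlas, because the non-projective generators $x\mapsto x^\alpha$ of $\PP$ only enter the transition functions through the exponent $\varkappa_L$.

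Concretely, for every $L\in\pi_0M^0$, set $\alpha_L=1/|\varkappa_L|$, and perform simultaneously the transverse power change around each $L$ with parameter $\alpha_L$, as allowed by Section~\ref{ss: tubular neighborhoods}. This defines a new smooth structure on $M$ that agrees with the original one on $M^1$, keeps $Z\in\fXcom(M,\FF)$ smooth, and replaces each $\varkappa_L$ by $\alpha_L\varkappa_L=\sign(\varkappa_L)=\pm1$ (see Section~\ref{ss: change of diff struct}).

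I would then rerun the atlas construction from the proof of Proposition~\ref{p: PP-foln} in this new smooth structure. On each $M^1_l$, the atlas $\{V_i,(u_i,v_i)\}$ with $\overline Z=u_i\partial_{u_i}$ and affine transition $u_i=e^{c_{ij}}u_j$ is unchanged. Around each $L$, I take foliated charts $(U_a,(x_a,y_a))$ in the new structure with $\overline Z=\varkappa'_L\,x_a\partial_{x_a}$ and $\varkappa'_L=\pm1$, whose mutual elementary holonomy transformations are still homotheties by Proposition~\ref{p: Hol L}. The crucial step is the overlap $U_a\cap V_i$: plugging $\varkappa'_L=\pm1$ into the formula $h_{ai}(u)=\bar p_a\bar p_i^{-\varkappa'_L}u^{\varkappa'_L}$ derived in the proof of Proposition~\ref{p: PP-foln}, the power disappears, and $h_{ai}$ is either a positive dilation $(\bar p_a/\bar p_i)u$ (when $\varkappa'_L=1$, forcing $\bar p_a>0$) or the composition of the projective rotation $u\mapsto-1/u$ with the dilation $u\mapsto(-\bar p_a\bar p_i)u$ (when $\varkappa'_L=-1$, forcing $\bar p_a<0$, so $-\bar p_a\bar p_i>0$). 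In both cases $h_{ai}\in\PSL(2,\R)$.

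The main thing to verify carefully, and the only potential obstacle, is that all the steps of the proof of Proposition~\ref{p: PP-foln} remain valid in the new smooth structure: that the new transverse coordinates $x_a$ near $L$ are genuinely smooth where they overlap with the coordinates $u_i$ on $M^1_l$, and that $\overline Z$ still takes the advertised normal forms in these charts. This is precisely what is built into the construction of Section~\ref{ss: change of diff struct}, so the check is routine. Once established, every elementary holonomy transformation of the new atlas lies in $\PSL(2,\R)\subset\PP$, so the atlas realizes $\FF$ as a $(\PSL(2,\R),S^1_\infty)$-foliation; that is, $\FF$ is transversely projective.
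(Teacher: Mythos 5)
Your proposal is correct and follows essentially the same route as the paper: perform a transverse power change around each $L\subset M^0$ to normalize $\varkappa_L$ to $\pm1$, and then observe that the transition maps $h_{ai}(u)=\bar p_a\bar p_i^{-\varkappa}u^{\varkappa}$ from the proof of Proposition~\ref{p: PP-foln} become restrictions of elements of $\PSL(2,\R)$. Your explicit factorization of the two cases $\varkappa=\pm1$ into a dilation, respectively a dilation composed with $u\mapsto-1/u$, is exactly what the paper's references to~\eqref{u to tilde u to ...} and~\eqref{u to tilde u to hat u to ...} amount to.
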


\begin{proof}
Using a transverse power change of the differentiable structure around $M^0$, we can assume that $\varkappa_L=\pm1$ for all $L\in\pi_0M^0$. Then, in the proof of Proposition~\ref{p: PP-foln}, the elementary holonomy transformations~\eqref{u to tilde u to ...} and~\eqref{u to tilde u to hat u to ...} are also restrictions of elements of $\PSL(2,\R)$.
\end{proof}

\section{Existence and description of simple foliated flows}\label{s: existence}

Now let $\FF$ be any smooth transversely oriented foliation of codimension one on a closed manifold $M$. 

\subsection{Existence of simple foliated flows}\label{ss: existence}

\begin{prop}\label{p: simple preserved leaves => there exist simple flows}
If $(M,\FF)$ admits some transversely simple foliated flow $\phi$, then it also admits some simple foliated flow $\psi$ with $\bar\phi=\bar\psi$.
\end{prop}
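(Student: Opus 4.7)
The aim is to find $V \in \fX(\FF)$ so that $B := Z + V$ generates a simple flow $\psi$; since $\overline V = 0$, we have $\overline B = \overline Z$ and hence $\bar\psi = \bar\phi$. The construction splits along the decomposition $M = M^0 \sqcup M^1$.

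\emph{Stage 1 (near the preserved leaves).} Using the tubular neighborhood decomposition of Section~\ref{ss: tubular neighborhoods}, for each $L \in \pi_0 M^0$ I would equip $L$ with a Morse function $f_L$ and a generic Riemannian metric, making $A_L := \nabla f_L$ a simple vector field on $L$ without closed orbits (Example~\ref{ex: Morse}). Applying Proposition~\ref{p: A -> B} to the suspension $(M'_L,\FF'_L)$ with input $A_L$ produces a simple $B_L \in \fXcom(M'_L,\FF'_L)$ having no closed orbits, with $\overline{B_L} = \overline{Y'_L}$ and $(B_L)_0 \equiv A_L$. Combining the $B_L$ into $B' \in \fX(M',\FF')$ and invoking Proposition~\ref{p: Z', A}(ii), I obtain $B_1 \in \fX(M,\FF)$ with $\overline{B_1} = \overline Z$, $B_1 \equiv B'$ on $T$, and $B_1 = Z$ on $M \sm T_0$. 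Since $\overline{B_1}$ vanishes exactly on $M^0$, every fixed point of the flow $\psi_1$ of $B_1$ lies in $M^0$; by Proposition~\ref{p: phi_0 simple => phi simple in M^0} applied in each suspension, these fixed points are all simple. Furthermore $\psi_1$ has no closed orbit inside $T$.

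\emph{Stage 2 (closed orbits on $M^1$).} To make the remaining closed orbits simple, I fix an open tubular neighborhood $T'_1$ of $M^0$ with $\overline{T'_1} \subset T$, and look for $V \in \fX(\FF)$ supported in $M \sm \overline{T'_1}$ such that the flow $\psi$ of $B := B_1 + V$ has only simple closed orbits in $M \sm \overline{T'_1}$. Because $V \equiv 0$ on $T'_1$, Stage~1 is preserved: the fixed points of $\psi$ are those of $\psi_1$ (hence simple), and $\psi$ has no closed orbit meeting $T'_1$. The existence of a suitable $V$ follows from a Kupka--Smale-type density argument (recalled in Section~\ref{ss: simple flows}) adapted to the affine space $B_1 + \fX(\FF)$ restricted to $M \sm \overline{T'_1}$: at any closed orbit of the current flow in $M^1$, the Poincar\'e return map acts on a transversal that may be chosen inside a leaf, and a compactly supported leafwise perturbation there can independently adjust the linearization of that return map to avoid the eigenvalue~$1$.

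\textbf{Main obstacle.} The delicate step is Stage~2, where one must justify Kupka--Smale-type genericity within the restricted class $\fX(\FF)$ (supported away from $M^0$) rather than in the whole of $\fX(M)$. The structural key is that any closed orbit of a foliated flow in $M^1$ is transverse to $\FF$, so a Poincar\'e transversal sits inside a leaf; compactly supported leafwise vector fields act freely enough there to modify the return map's linearization in any prescribed direction. Carrying out the Baire argument uniformly over all closed orbits within this restricted class of perturbations, while keeping control near $M^0$ and over the interpolation region $T_0 \sm T$, is the technical heart of the proof.
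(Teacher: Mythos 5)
Your Stage~1 coincides with the paper's: a Morse gradient $A$ on $M^0$ (Example~\ref{ex: Morse}), fed through Proposition~\ref{p: A -> B} and Proposition~\ref{p: Z', A}~(\ref{i: A -> B}) to get a foliated $C\in\fX(M,\FF)$ with $\overline C=\overline Z$ whose fixed points are all simple and which has no closed orbits near $M^0$. The gap is in Stage~2. You reduce everything to a ``Kupka--Smale-type density argument adapted to the affine space $B_1+\fX(\FF)$'' and explicitly defer it as the technical heart; but that restricted genericity theorem is not in the literature you can quote, and establishing it (transversality of the period map under leafwise perturbations only, the Baire argument over all periods on the non-compact manifold $M^1$, control on the interpolation collar) is essentially as hard as the proposition itself. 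There is also a secondary unjustified claim: a closed orbit of the perturbed flow can meet $T'_1$ without being contained in it (it may leave the collar, travel through the region where $V\ne0$, and return), so such orbits are neither excluded by the suspension model nor covered by a perturbation supported in $M\sm\overline{T'_1}$.

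The paper avoids inventing a restricted genericity theorem by a codimension-one trick you are missing. It applies the \emph{unrestricted} Kupka--Smale/Peixoto theorem (Section~\ref{ss: simple flows}) on the open manifold $M^1$ to get a generic $D\in\fX(M^1)$ close to $C|_{M^1}$ in the strong $C^\infty$ topology; $D$ need not be foliated. But $N\FF$ is an oriented line bundle and $\overline C$ is nowhere zero on $M^1$, so for $D$ close enough to $C|_{M^1}$ one gets an extension $E\in\fX(M)$ with $E=A$ on $M^0$ and $\overline C=f\,\overline E$ for some $0<f\in C^\infty(M)$ with $f=1$ on $M^0$. Then $fE$ is again an infinitesimal transformation of $(M,\FF)$ with $\overline{fE}=\overline Z$, and rescaling by a positive function preserves simplicity of fixed points and closed orbits (Remark~\ref{r: fZ}), so $fE$ does the job globally --- including for all closed orbits in $M^1$, wherever they sit relative to the collars. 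I would recommend replacing your Stage~2 by this rescaling argument; as written, your proof is incomplete at its central step.
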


\begin{proof}
Let $Z\in\fX(M,\FF)$ be the infinitesimal generator of $\phi$, and consider the notation of Section~\ref{ss: tubular neighborhoods}. Take some simple flow $\zeta$ on $M^0$ without closed orbits (Example~\ref{ex: Morse}), and let $A$ denote its infinitesimal generator. By Proposition~\ref{p: A -> B}, there is some simple $B\in\fXcom(M',\FF')$, without closed orbits, such that $B|_{M^0}=A$ and $\overline{B}=\overline{Z'}$. Then, by Proposition~\ref{p: Z', A}~(\ref{i: A -> B}), there is some $C\in\fX(M,\FF)$ with $\overline{C}=\overline{Z}$, $C\equiv B$ on $T\equiv T'$, and $C=Z$ on $M\sm T_0$. 

By Peixoto's extension to open manifolds of a theorem of Kupka and Smale (Section~\ref{ss: simple flows}), there is some generic $D\in\fX(M^1)$ as close as desired to $C|_{M^1}$ in the strong $C^\infty$ topology; in particular, $D$ is simple. If $D$ close enough to $C|_{M^1}$ in the strong $C^\infty$ topology, then $D$ has an extension $E\in\fX(M)$ with $E|_{M^0}=A$, and $\overline C=f\overline{E}$ in $C^\infty(M;N\FF)$ for some $0<f\in C^\infty(M)$ with $f=1$ on $M^0$. Thus $fE\in\fX(M,\FF)$ and $\overline{fE}=\overline Z$, and therefore the foliated flow $\psi$ of $fE$ satisfies $\bar\psi=\bar\phi$. So $\psi$ is transversely simple and has the same preserved leaves as $\phi$ (the leaves in $M^0$); in particular, $\psi$ has no fixed points in $M^1$. Since $fE=E=C\equiv B=A$ on $M^0$, we get that $\psi$ agrees with $\zeta$ on $M^0$, and therefore its fixed points are simple by Proposition~\ref{p: phi_0 simple => phi simple in M^0}. Moreover $fE|_{M^1}=fD$ is simple by Remark~\ref{r: fZ}.
\end{proof}

\begin{defn}\label{d: weakly simple foliated flow}
  It is said that $\phi$ {\rm(}or $Z${\rm)} is {\em weakly simple\/} if its preserved leaves are transversely simple and its closed orbits are simple.
\end{defn}

By Proposition~\ref{p: phi_0 simple => phi simple in M^0}, simple foliated flows are weakly simple.

\begin{prop}\label{p: transversely simple => there exist weakly simple flows equal to id on M^0}
If $(M,\FF)$ has some transversely simple foliated flow $\phi$, then it also has some weakly simple foliated flow $\zeta$ such that $\bar\phi=\bar\zeta$, $\zeta^t=\id$ on $M^0$ for all $t$, and $\zeta$ has no closed orbit in some neighborhood of $M^0$.
\end{prop}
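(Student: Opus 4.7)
The plan is to adapt the proof of Proposition~\ref{p: simple preserved leaves => there exist simple flows}, replacing the Morse-type auxiliary flow on $M^0$ used there by the zero vector field. Let $Z\in\fX(M,\FF)$ generate $\phi$. The vector field $Y'\in\fXcom(M',\FF')$ of Section~\ref{ss: tubular neighborhoods} satisfies $\overline{Y'}=\overline Z$ on $T\equiv T'$, and since its lift to each $T'_L$ is $(\varkappa_L x\partial_x,0)$ it vanishes identically on $M^0\subset M'$. Applying Proposition~\ref{p: Z', A}(ii) with $A=Y'$ produces $B\in\fX(M,\FF)$ with $\overline B=\overline Z$, $B\equiv Y'$ on $T\equiv T'$, and $B=Z$ on $M\sm T_0$. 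In particular $B\equiv0$ on $M^0$, so the flow $\eta$ of $B$ fixes $M^0$ pointwise, and on each $T_L$ is modeled by $(x,\tilde y)\mapsto(e^{\varkappa_L t}x,\tilde y)$. The argument of Claim~\ref{cl: eta has neither fixed points nor closed orbits in M^1} applies verbatim: a closed orbit of $\eta$ in $T\sm M^0$ would force some $\gamma\in\Gamma_L$ to fix a point $\tilde y\in\widetilde L$, whence $\gamma=e$ by freeness of the deck action, and then $e^{\varkappa_L s}=1$ gives a contradiction.

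To simplify the closed orbits, I apply Peixoto's extension to open manifolds of the Kupka--Smale theorem (Section~\ref{ss: simple flows}) to $B|_{M^1}$, producing $D\in\fX(M^1)$ with only simple fixed points and closed orbits, arbitrarily close to $B|_{M^1}$ in the strong $C^\infty$ topology. For fine enough approximation, $D$ extends to some $E\in\fX(M)$ with $E|_{M^0}=0$, and $\overline E$ is $C^\infty$-close to $\overline Z$ on $M$, parallel to $\overline Z$ in $N\FF$ on a neighborhood of $M^0$ (since $\overline B\equiv\overline Z$ there). Hence there exists $0<f\in C^\infty(M)$ with $\overline Z=f\overline E$ and $f=1$ on $M^0$. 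Then $fE\in\fX(M,\FF)$, its foliated flow $\zeta$ satisfies $\bar\zeta=\bar\phi$, $\zeta^t=\id$ on $M^0$ (since $fE$ vanishes there), and by Remark~\ref{r: fZ} the closed orbits of $\zeta$ coincide as sets with those of $E$, and inherit their simplicity. Combined with the transverse simplicity of the preserved leaves, inherited from $\bar\zeta=\bar\phi$, this makes $\zeta$ weakly simple.

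Finally, since $M^0$ is a normally hyperbolic invariant set of $B$ inside $T$ (with transverse eigenvalue $\varkappa_L$ on each $L$), its local dynamics persist under small $C^\infty$-perturbations: for $E$ close enough to $B$, every point of some tube $T^*$ of $M^0$ leaves $T^*$ in uniformly bounded forward or backward time under the flow of $E$, so no closed orbit of $E$ lies in $T^*$, hence none of $\zeta$ does. \textbf{The main obstacle} is precisely this last step: guaranteeing that a Kupka--Smale perturbation, which a priori must hold globally on $M^1$, can be chosen $C^\infty$-small enough near $M^0$ to preserve the absence of closed orbits in some tube. This is exactly why the strong $C^\infty$ topology (rather than a weaker one) is essential, as it furnishes uniform control of the perturbation on compact neighborhoods of $M^0$ where the explicit linearization $B=Y'_L$ from Step~1 is in force.
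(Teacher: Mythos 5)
Your first paragraph is, in substance, the paper's entire proof: the authors simply apply Proposition~\ref{p: Z', A}~(\ref{i: A -> B}) with $A=Y'$ and stop there, leaving the stated properties of $\zeta$ to the reader. Everything you add beyond that is extra, and the Kupka--Smale/Peixoto step of your second paragraph is not redundant: the field $B$ produced by Proposition~\ref{p: Z', A}~(\ref{i: A -> B}) equals $Z$ outside $T_0$ and is an uncontrolled interpolation on $T_0\sm T$, and since $Z$ is only assumed transversely simple, the closed orbits of the flow of $B$ away from $M^0$ need not be simple; your perturbation repairs exactly this, at the cost of having to re-verify near $M^0$ the properties that held for $B$ by construction. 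So your route is more elaborate than the paper's one-liner, but it addresses a point that the paper's proof leaves implicit.

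The one step that does not hold as written is the persistence argument of your last paragraph. It is false that every point of a tube $T^*$ around $M^0$ leaves $T^*$ in uniformly bounded forward or backward time: already for $Y'$ the escape time of a point at transverse coordinate $x$ is $|\varkappa_L|^{-1}\ln(\epsilon^*/|x|)$, which blows up as $x\to0$, and in the contracting time direction the point never leaves at all; normal hyperbolicity of $M^0$ by itself does not exclude closed orbits of long period accumulating on $M^0$. The conclusion is nevertheless correct, and follows from structure you already have. Since $\overline{fE}=\overline Z\equiv\overline{Y'}$ on $T\equiv T'$, the lift of $fE$ to $\R\times\widetilde L$ still has transverse component exactly $\varkappa_Lx\partial_x$ by~(\ref{tilde phi^t(tilde y,x), widetilde Z}), while its leafwise component is ($\Gamma_L$-invariantly) $C^0$-small for a fine enough approximation, because $fE$ is then $C^0$-close to $B\equiv Y'$ on the tube and the leafwise component of $Y'$ vanishes. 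Hence, with $\omega$, $\theta$ and $\rho$ as in Section~\ref{ss: diff forms defining the suspension}, one computes $d\rho(fE)=\rho\,(\theta(fE)+\varkappa_L)$ with $|\theta(fE)|<|\varkappa_L|$, so $|\rho|$ is strictly monotone along every orbit of $\zeta$ in $T^*\sm M^0$ and no closed orbit can be contained in $T^*$. (Alternatively, argue as in Claim~\ref{cl: eta has neither fixed points nor closed orbits in M^1}: a closed orbit of period $\ell$ in the tube forces $e^{\varkappa_L\ell}=a_{L,\gamma}$ for some $\gamma\ne e$ together with a leafwise displacement realizing the deck transformation $\gamma$, which a sufficiently small leafwise component cannot produce.) With that substitution your argument is complete.
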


\begin{proof}
Apply Proposition~\ref{p: Z', A}~(\ref{i: A -> B}) with some transversely simple $Z\in\fX(M,\FF)$ and $A=Y'$.
\end{proof}

\subsection{Description of foliations with simple foliated flows}\label{ss: description}

Now, without requiring the existence of any special foliated flow a priori, assume that $\FF$ satisfies the following properties:
\begin{enumerate}[(A)]

\item\label{i: almost w/o hol} $\FF$ is almost without holonomy with finitely many leaves with holonomy.

\item\label{i: homotheties} The holonomy groups of the compact leaves can be described as groups of germs at $0$ of homotheties on $\R$.

\end{enumerate}
By~(\ref{i: almost w/o hol}), we can use the notation of Section~\ref{ss: folns almost w/o hol}. In the following, we refer to the possibilities~(\ref{i: fiber bundle})--(\ref{i: all folns FF_l are models (2)}) of Section~\ref{s: transv simple foliated flows} for transversely simple flows.

\begin{ex}\label{ex: fiber bundle} 
Suppose that $\FF$ is given by a fiber bundle $M\to S^1$ with connected fibers. For any even number of points, $x_1,\dots,x_{2m}\in S^1$ ($m\ge0$), in cyclic order, and numbers $\varkappa_1,\dots,\varkappa_{2m}\in\R^\times$, with alternate sign, there is some simple flow $\bar\phi$ on $S^1$ such that $\Fix(\bar\phi)=\{x_1,\dots,x_{2m}\}$ and $\bar\phi^t_*=e^{\kappa_jt}$ on $T_{x_j}S^1\equiv\R$. By Proposition~\ref{p: simple preserved leaves => there exist simple flows}, there is a simple foliated flow $\phi$ on $(M,\FF)$ whose preserved leaves are fibers $L_1,\dots,L_{2m}$ over $x_1,\dots,x_{2m}$. If $m>0$, then $\phi$ has no closed orbits in $M^1$. If $m=0$, then $\phi$ has no preserved leaves, and therefore no fixed points. This is of type~(\ref{i: fiber bundle}).
\end{ex}

\begin{ex}\label{ex: minimal Lie foln} 
If $\FF$ is an $\R$-Lie foliation with dense leaves, $\olfX(M,\FF)$ is of dimension $1$ and generated by a non-vanishing transverse vector field. Hence there are simple foliated flows by Proposition~\ref{p: simple preserved leaves => there exist simple flows}, all of them without preserved leaves. This is  of type~(\ref{i: minimal Lie foln}),
\end{ex}

\begin{ex}\label{ex: transv affine foln} 
Suppose that $\FF$ is a transversely affine foliation that is not an $\R$-Lie foliation. Then, according to Section~\ref{ss: transv affine folns}, to get~(\ref{i: almost w/o hol}), $\FF$ is elementary, and we can assume that $\im D=\R$ and $\Hol\FF$ is a non-trivial group of homotheties. Then, by Lemma~\ref{l: Z = varkappa x partial_x}~(\ref{i: Z = varkappa x partial_x}) and Proposition~\ref{p: olfX(M, FF) equiv fX(im D, Gamma), transversely affine}, $\olfX(M,\FF)$ is generated by a transverse vector field $\overline Z$ such that the foliated flow $\phi$ of $Z$ is transversely simple.  By Proposition~\ref{p: simple preserved leaves => there exist simple flows}, there is a simple foliated flow $\phi'$ with $\bar\phi'=\bar\phi$. It also follows from Lemma~\ref{l: Z = varkappa x partial_x}~(\ref{i: Z = varkappa x partial_x}) and Proposition~\ref{p: olfX(M, FF) equiv fX(im D, Gamma), transversely affine} that there is some $\varkappa\in\R^\times$ such that $\{\,\varkappa_L\mid L\in\pi_0M^0\,\}=\{\varkappa\}$.
\end{ex}

\begin{ex}\label{ex: transv proj foln} 
Assume that $\FF$ is a transversely projective foliation that is not transversely affine. Then, according to Section~\ref{ss: transv proj folns}, to get~(\ref{i: almost w/o hol}) and~(\ref{i: homotheties}), we can assume that $\im D=S^1_\infty$ and $\Hol\FF$  consists of the identity and hyperbolic elements with common fixed point set $\{0,\infty\}$ and possible elliptic elements that keep $\{0,\infty\}$ invariant. By Lemma~\ref{l: Z = varkappa x partial_x, transv proj} and the projective version of Proposition~\ref{p: olfX(M, FF) equiv fX(im D, Gamma), transversely affine}, to get $\olfX(M,\FF)\ne0$, there must be no elliptic element in $\Hol\FF$. Moreover, in this case, $\olfX(M,\FF)$ is generated by a transverse vector field $\overline Z$ such that the foliated flow $\phi$ of $Z$ is transversely simple. By Proposition~\ref{p: simple preserved leaves => there exist simple flows}, there is some simple foliated flow $\phi'$ with $\bar\phi'=\bar\phi$. By Lemma~\ref{l: Z = varkappa x partial_x, transv proj} and the projective version of Proposition~\ref{p: olfX(M, FF) equiv fX(im D, Gamma), transversely affine}, there is some $\varkappa\in\R^+$ such that $\{\,\varkappa_L\mid L\in\pi_0M^0\,\}=\{\pm\varkappa\}$.
\end{ex}

\begin{ex}\label{ex: transv change of diff str} In Examples~\ref{ex: transv affine foln} and~\ref{ex: transv proj suspension foln}, we can consider any transverse power change of the differentiable structure around $M^0$ (Sections~\ref{ss: change of diff struct} and~\ref{ss: tubular neighborhoods}). With the new differentiable structure, the foliation has the same simple foliated flows, but the absolute values $|\varkappa_L|$ can be arbitrary, keeping the same signs $\sign(\varkappa_L)$. Thus $\{\,\sign(\varkappa_L)\mid L\in\pi_0M^0\,\}$ is $\{1\}$ or $\{\pm1\}$ if and only we have changed the differential structure of Example~\ref{ex: transv affine foln} or~\ref{ex: transv proj suspension foln}, respectively.
\end{ex}

Examples~\ref{ex: transv affine foln}--\ref{ex: transv change of diff str} can be of type~(\ref{i: all folns FF_l are models (1)}) or~(\ref{i: all folns FF_l are models (2)}).

\begin{thm}\label{t: simple foliated flows}
For any smooth transversely oriented foliation of codimension one on a closed manifold, the following conditions are equivalent:
\begin{enumerate}[(i)]

\item\label{i: A-C} It satisfies~(\ref{i: almost w/o hol}) and~(\ref{i: homotheties}).

\item\label{i: exs} It is described by one of Examples~\ref{ex: fiber bundle}--\ref{ex: transv change of diff str}.

\item\label{i: preserved leaves are simple} It admits a transversely simple foliated flow.

\item\label{i: weakly simple foliated flow trivial on preserved leaves} It admits a weakly simple foliated flow (trivial on its preserved leaves).

\item\label{i: simple foliated flow} It admits a simple foliated flow.

\end{enumerate}
\end{thm}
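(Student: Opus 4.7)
The plan is to establish the cycle
\[
(\ref{i: exs})\Rightarrow(\ref{i: simple foliated flow})\Rightarrow(\ref{i: weakly simple foliated flow trivial on preserved leaves})\Rightarrow(\ref{i: preserved leaves are simple})\Rightarrow(\ref{i: A-C})\Rightarrow(\ref{i: exs}).
\]
Four arrows are short. A simple foliated flow is transversely simple: at a simple fixed point the endomorphism $H_p$ is block lower-triangular for the splitting $T_pM=N_p\FF\oplus T_pL$ induced by any foliated chart, so its invertibility forces the transverse diagonal block $\varkappa_L$ to lie in $\R^\times$; Proposition~\ref{p: transversely simple => there exist weakly simple flows equal to id on M^0} then yields $(\ref{i: simple foliated flow})\Rightarrow(\ref{i: weakly simple foliated flow trivial on preserved leaves})$. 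The implication $(\ref{i: weakly simple foliated flow trivial on preserved leaves})\Rightarrow(\ref{i: preserved leaves are simple})$ is immediate from Definition~\ref{d: weakly simple foliated flow}. For $(\ref{i: preserved leaves are simple})\Rightarrow(\ref{i: A-C})$, Proposition~\ref{p: M^0 is a finite union of compact leaves} gives~(A) (leaves in $M^1$ have no holonomy, by the opening of Section~\ref{s: transv simple foliated flows}), and (B) is Proposition~\ref{p: Hol L}. Finally, $(\ref{i: exs})\Rightarrow(\ref{i: simple foliated flow})$ is an example-by-example check: Example~\ref{ex: fiber bundle} is direct, Examples~\ref{ex: minimal Lie foln}--\ref{ex: transv proj foln} supply transversely simple flows that Proposition~\ref{p: simple preserved leaves => there exist simple flows} promotes to simple ones, and Example~\ref{ex: transv change of diff str} inherits this since the transverse power change of Section~\ref{ss: change of diff struct} leaves $M^1$ smoothly unchanged and only rescales $\overline Z$.

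The substantive arrow is $(\ref{i: A-C})\Rightarrow(\ref{i: exs})$. Under~(A), Hector's description (Section~\ref{ss: folns almost w/o hol}) presents $\FF$ via models $\FF_l$ on compact $M_l$. Under~(B), every non-identity generator of $\Hol L$ is a homothety with $0$ as its only fixed point, hence quasi-analytic; so $\Hol L$ is quasi-analytic and Proposition~\ref{p: quasi-analytic => the same models} forces a common model type. Together with Remark~\ref{r: Hector's description of folns almost w/o hol} this recovers exactly the four alternatives~(a)--(d) recorded at the end of Section~\ref{s: transv simple foliated flows}. Alternatives~(a) and~(b) are immediately Examples~\ref{ex: fiber bundle} and~\ref{ex: minimal Lie foln}, respectively.

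In alternatives~(c) and~(d) we have $M^0\ne\emptyset$, and the plan is to first manufacture a global transverse vector field $\overline Z\in\olfX(M,\FF)$ with simple zeros exactly along $M^0$, and then appeal to Section~\ref{s: global str}. The suspension model of Section~\ref{ss: basic defns, suspension} dictates that in the tubular coordinates of Section~\ref{ss: tubular neighborhoods} near every $L\subset M^0$ the field must equal $\varkappa_L\, x\,\partial_x$ for a unique $\varkappa_L\in\R^\times$ determined by a chosen generator of the multiplicative image of $\Hol L$ in $\R^+$ together with the transverse orientation. On each $\mathring M_l$, Lemma~\ref{l: Z = varkappa x partial_x} and Remark~\ref{r: Z = varkappa x partial_x} identify the space of $\Hol\FF_l$-invariant vector fields with the line generated by the Fedida generator $\partial_t$, while Lemma~\ref{l: overline Z is determined by overline Z|_V} forces the boundary values to determine the field globally on $\mathring M_l$. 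The pieces assemble into a transversely simple foliated flow; Propositions~\ref{p: PP-foln} and~\ref{p: transv projective} then render $\FF$ a $\PP$-foliation that becomes transversely projective after a transverse power change. The sign pattern of $\{\varkappa_L\}_{L\subset M^0}$ then discriminates: a constant sign places $\FF$ in Example~\ref{ex: transv affine foln}, while the occurrence of both signs forces the developing map to land in $S^1_\infty$ with $\Hol\FF$ generated by hyperbolic elements sharing the fixed-point set $\{0,\infty\}$, giving Example~\ref{ex: transv proj foln}; Example~\ref{ex: transv change of diff str} absorbs the residual ambiguity in the absolute values $|\varkappa_L|$.

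The principal obstacle is the patching step in~(c) and~(d): checking that the scales $\{\varkappa_L\}$ coming from the homothety generators of $\Hol L$ match, along the boundary of every $\mathring M_l$, the rescaled Fedida developing scale on $\mathring M_l$. This amounts to matching the rank of $\Hol L$ with the period lattice of the developing map $D_l$, and to fixing signs so that the transverse orientations agree across the tubular collars of Section~\ref{ss: tubular neighborhoods}. Once that matching is in hand, the projective structure produced by Propositions~\ref{p: PP-foln} and~\ref{p: transv projective} completes the identification of $\FF$ with one of the Examples catalogued in Section~\ref{ss: description}, closing the cycle.
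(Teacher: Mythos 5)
Your architecture is the paper's: the same cycle of implications, the same supporting results (Propositions~\ref{p: M^0 is a finite union of compact leaves}, \ref{p: Hol L}, \ref{p: simple preserved leaves => there exist simple flows}, \ref{p: transversely simple => there exist weakly simple flows equal to id on M^0}, \ref{p: PP-foln}, \ref{p: transv projective}), and the easy arrows are handled as in the paper. The problem is that at the one substantive arrow, (\ref{i: A-C})$\Rightarrow$(\ref{i: exs}) in cases~(c) and~(d), you announce the decisive step --- assembling the local fields $\varkappa_L x\partial_x$ into a global $\overline Z\in\olfX(M,\FF)$ --- as ``the principal obstacle,'' describe it as ``matching the rank of $\Hol L$ with the period lattice of $D_l$,'' and then proceed as if it were done. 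A proof that labels its hardest step an obstacle and assumes it is not a proof; this is a genuine gap. It is also misdiagnosed: $\varkappa_L$ is \emph{not} ``determined by a chosen generator of the multiplicative image of $\Hol L$'' --- near a single compact leaf every $\varkappa\in\R^\times$ gives an invariant field $\varkappa x\partial_x$, and the paper stresses that $|\varkappa_L|$ can be changed at will by transverse power changes. No comparison of ranks with period lattices occurs anywhere in the paper. The actual constraint is global: Lemma~\ref{l: overline Z is determined by overline Z|_V} and Proposition~\ref{p: olfX(M, FF) equiv fX(im D, Gamma), transversely affine} (and its projective analogue) show that $\olfX(M,\FF)$ is one-dimensional, generated by the field corresponding to $x\partial_x$, which simultaneously produces the transversely simple flow and forces $\{\varkappa_L\}=\{\varkappa\}$ or $\{\pm\varkappa\}$; but those results are proved \emph{after} one knows $\FF$ is transversely affine or projective, which is exactly what you are trying to establish at this point. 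Your proposed order (build $\overline Z$ first, then apply Propositions~\ref{p: PP-foln} and~\ref{p: transv projective}) therefore cannot lean on them without circularity, and you supply no substitute argument that an invariant extension of the boundary germ $\varkappa_L x\partial_x$ over each $\mathring M_l$ exists and is consistent at all boundary components of $M_l$.

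Two smaller points. First, the paper's own route to (\ref{i: A-C})$\Rightarrow$(\ref{i: exs}) is not a hand-built patching: it combines the case division (a)--(d), Propositions~\ref{p: PP-foln} and~\ref{p: transv projective} (whose proofs compute the transition functions $u\mapsto\bar p_a\bar p_i^{-\varkappa}u^{\varkappa}$ directly, with no extra matching hypothesis), and the Inaba and Inaba--Matsumoto classifications recalled in Sections~\ref{ss: transv affine folns} and~\ref{ss: transv proj folns} to land in Examples~\ref{ex: transv affine foln}--\ref{ex: transv change of diff str}; your ``sign--pattern discrimination'' should be replaced by, or at least reduced to, that classification. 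Second, your justification that a simple flow is transversely simple (the block-triangular $H_p$) only covers preserved leaves containing fixed points of $\phi$; it says nothing about a preserved leaf on which $\phi$ restricts to a flow with neither fixed points nor closed orbits, so as stated it does not prove (\ref{i: simple foliated flow})$\Rightarrow$(\ref{i: preserved leaves are simple}) in full.
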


\begin{proof}
We already know that~(\ref{i: preserved leaves are simple}) yields~(\ref{i: A-C}) (Section~\ref{s: transv simple foliated flows}). By Proposition~\ref{p: transv projective}, Examples~\ref{ex: fiber bundle}--\ref{ex: transv change of diff str} cover all cases~(\ref{i: fiber bundle})--(\ref{i: all folns FF_l are models (2)}), and therefore~(\ref{i: A-C}) yields~(\ref{i: exs}). Proposition~\ref{p: simple preserved leaves => there exist simple flows} states that~(\ref{i: preserved leaves are simple}) yields~(\ref{i: simple foliated flow}), which was used in Examples~\ref{ex: fiber bundle}--\ref{ex: transv change of diff str}, showing that~(\ref{i: exs}) yields~(\ref{i: simple foliated flow}). Proposition~\ref{p: transversely simple => there exist weakly simple flows equal to id on M^0} states that~(\ref{i: preserved leaves are simple}) yields~(\ref{i: weakly simple foliated flow trivial on preserved leaves}). The remaining implications are obvious.
\end{proof}

According to Theorem~\ref{t: simple foliated flows}, the foliations of Examples~\ref{ex: Kronecker's flow},~\ref{ex: trasv aff foln on S^n-1 times S^1},~\ref{ex: connected sum of transv affine folns} and~\ref{ex: transv proj suspension foln}--\ref{ex: connected sum of transv proj folns} admit simple foliated flows.

\bibliographystyle{amsplain}


\providecommand{\bysame}{\leavevmode\hbox to3em{\hrulefill}\thinspace}
\providecommand{\MR}{\relax\ifhmode\unskip\space\fi MR }
\providecommand{\MRhref}[2]{%
  \href{http://www.ams.org/mathscinet-getitem?mr=#1}{#2}
}
\providecommand{\href}[2]{#2}

\end{document}